\newtheorem{theorem}{Theorem}[section]
\newtheorem{lemma}[theorem]{Lemma}
\newtheorem{proposition}[theorem]{Proposition}
\newtheorem{definition}[theorem]{Definition}
\newtheorem{remark}[theorem]{Remark}
\def\enne{\mathbb{N}}
\def\zeta{\mathbb{Z}}
\def\erre{\mathbb{R}}
\def\P{\mathbb{P}}
\def\E{\mathop{{}\mathbb{E}}}
\def\LL{\mathcal{L}}
\def\cF{\mathscr{F}}
\def\cB{\mathscr{B}}
\def\A{\mathcal{A}}
\def\dd{\mathsf{d}}
\def\cP{\mathscr{P}}
\renewcommand{\d}{{\mathrm d}}
\numberwithin{equation}{section}
\def\beq{\begin{equation}}
\def\eeq{\end{equation}}
\def\to{\rightarrow}
\def\embed{\hookrightarrow}
\def\norm #1{\left\|#1\right\|}
\def\sp #1#2{\left<#1,#2\right>}
\newcommand\ip\sp
\begin{document}

\title{Degenerate Kolmogorov equations and ergodicity \\
for the stochastic Allen-Cahn equation\\ with logarithmic potential}
\author{Luca Scarpa\thanks{Department of Mathematics, Politecnico di Milano,
Via E.~Bonardi 9, 20133 Milano, Italy.
E-mail: \texttt{luca.scarpa@polimi.it}
URL: \texttt{https://sites.google.com/view/lucascarpa}}
\and
Margherita Zanella\thanks{Department of Mathematics, Politecnico di Milano,
Via E.~Bonardi 9, 20133 Milano, Italy.
E-mail: \texttt{margherita.zanella@polimi.it}}}

\maketitle

\begin{abstract}
  Well-posedness \`a la Friedrichs is proved for a class of degenerate 
  Kolmogorov equations associated to stochastic Allen-Cahn equations
  with logarithmic potential.
  The thermodynamical consistency of the model requires the 
  potential to be singular and the multiplicative noise 
  coefficient to vanish at the respective potential barriers, 
  making thus the corresponding Kolmogorov equation 
  not uniformly elliptic in space.
  First, existence and uniqueness of
  invariant measures and ergodicity are discussed. Then,
  classical solutions to some regularised Kolmogorov equations
  are explicitly constructed. Eventually, 
  a sharp analysis of the blow-up rates of the regularised solutions and 
  a passage to the limit with a specific scaling yield
  existence \`a la Friedrichs for the original Kolmogorov equation.
  
  \smallskip
  
  {\textbf{Keywords:} stochastic Allen-Cahn equation; 
  invariant measures; 
  ergodicity; 
  Kolmogorov equations;
  degenerate elliptic equations.}
  
  \smallskip
  
  {\textbf{2010 MSC:} 35J70, 37A25, 37L40, 47D07, 47H06, 60H15.}
\end{abstract}

\tableofcontents

\section{Introduction}
Modelling the evolution of multiphase materials 
- e.g.~binary fluid mixtures, metallic alloys,
heterogenous human tissues - has become fundamental in the last decades in 
numerous fields such as Material Science, Biology, and Engineering.
One of the well-established mathematical ways of describing phase-separation 
is the so-called {\em diffuse interface}, or {\em phase-field}, approach. This consists in introducing 
a phase-variable $u$, or order parameter, with values in $[-1,1]$: the regions 
$\{u=1\}$ and $\{u=-1\}$ represent the pure phases, and it is assumed that there is 
a narrow blurred interfacial layer in between, where $u$ can take also the intermediate values $(-1,1)$. 
Such description has been firstly proposed by Cahn and Hilliard \cite{cahn-hill}
to model conserved dynamics of spinodal decomposition in metallic alloys, 
and since then has been extensively employed in several contexts.

One of the classical phase-field models for non-conserved
phase-separation is the Allen-Cahn equation:
this has been originally introduced in the context of Van oder Waals theory of phase transition
and has then been employed by Allen and Cahn in \cite{AC} 
for describing growth of grains in crystalline materials close to their melting points.
In its classical form, the deterministic Allen-Cahn equation reads
\beq\label{det_AC}
  \partial_t u - \nu\Delta u + F'(u) = f \qquad\text{in } (0,T)\times D,
\eeq
where $D$ is a smooth bounded domain in $\erre^d$ ($d=2,3$), $T>0$ is a given reference time, 
$f$ is a suitable forcing term, and $\nu>0$ is a given constant depending on the structural data
such as the thickness of the separation layer.
The equation is usually complemented with a given initial datum, and 
homogeneous boundary conditions of Neumann or Dirichlet type.
The nonlinearity $F'$ represents the derivative of a double-well potential $F$,
which is required to be singular at $\pm1$ by the thermodynamical consistency of the model:
the relevant choice for $F$ is indeed the so-called Flory-Huggins logarithmic potential \cite{Flory} given by
\beq
  \label{F_log}
  F_{log}(r):=\frac\theta2\left[(1+r)\ln(1+r) + (1-r)\ln(1-r)\right] - \frac{\theta_0}2r^2, 
  \quad r\in(-1,1),
\eeq
where $0<\theta<\theta_0$ are fixed constant related to the critical temperature of the material in consideration.
Note that $F_{\log}$ is continuous on $[-1,1]$, with two global minima in $(-1,1)$, while $F_{log}'$ blows up 
at the potential barriers $\pm1$. This is coherent with the physical interpretation of 
diffuse-interface modelling in which only the values of the variable $u \in [-1,1]$ are meaningful.
The Allen-Cahn equation can also be seen as the gradient flow with respect to the $L^2(D)$-metric of the 
associated free-energy functional 
\begin{equation}
\label{energy}
\mathcal{E}(u):= \int_D\left(\frac{\nu}{2}|\nabla u|^2+ F(u)\right),
\end{equation}
where the former energy contribution penalises for high oscillations of $u$ while the 
latter takes into account the typical mixing/demixing effects.

Due to the singularity of the derivative $F'$, 
for mathematical simplicity the double-well potential $F$ is often approximated by a smooth
one in polynomial form. Let us stress that although this may be useful in the mathematical
treatment of the equation, it is a severe drawback on the modelling side:
for example, such choice does not even ensure the preservation of the physically relevant bound $u\in[-1,1]$
in general. For this reason, throughout the paper we deal only with
thermodynamically relevant potentials such as the logarithmic one \eqref{F_log},
as required by the model.

The deterministic Allen-Cahn equation provides a good description of the evolution of the phase separation.
Nonetheless, it presents some disadvantages. Indeed, it is not general enough to capture 
possible unpredictable effects which may affect phase-separation, such as 
thermal fluctuations, magnetic disturbances, or microscopic configurational phenomena.
These can be taken into account by adding a Wiener noise in the equation,
as suggested originally in the well celebrated stochastic Cook model for phase-separation \cite{cook}
and then confirmed in several contributions (see e.g.~\cite{BGW10, BMW08}).
By allowing for a stochastic Wiener-type forcing in \eqref{det_AC}, we deal with 
the stochastic Allen-Cahn equation in the general form
\begin{equation}
\label{eq_AC}
\begin{cases}
{\rm d}u-\nu\Delta u\,{\rm d}t +F'(u)\,{\rm d}t=
B(u)\,{\rm d}W & \text{in } (0,T) \times D,
\\
\alpha_du + \alpha_n\partial_{\bf n}u=0 & \text{in } (0,T) \times \partial D,
\\
u(0)=u_0 & \text{in } D,
\end{cases}
\end{equation}
where $W$ is a cylindrical Wiener process defined on a certain separable Hilbert space and $B$ is a suitable stochastically integrable operator with respect to $W$. The parameters $\alpha_d,\alpha_n\in\{0,1\}$
are such that $\alpha_d+\alpha_n=1$ and are thus responsible for the choice of 
Dirichlet or Neumann boundary conditions.

In the case of a logarithmic relevant potential \eqref{F_log},
well-poseness for the stochastic Allen-Cahn equation with Neumann boundary conditions
has been addressed for the first time in the very recent contribution \cite{Ber}.
Qualitative studies on the associated random separation principle have then been
analysed in \cite{BOS}. Roughly speaking, the novel idea to overcome the singularity 
of $F'$ was to employ a degenerate noise coefficient $B$ that vanishes at the potential barriers $\pm1$
in such a way to compensate the blow up of $F''$: existence of analytically strong solutions
(see Definition~\ref{strong_sol_def} below) is obtained for initial data satisfying 
\[
  u_0 \in V\cap\A, \quad\A:=\left\{v\in L^2(D): |v({\bf x})|\leq1 \quad\text{for a.e.~}{\bf x}\in D\right\},
\]
where $V$ is either $H^1(D)$ or $H^1_0(D)$, depending on the boundary condition.
The method is quite robust, in the sense that 
it has been applied also to different singular phase-filed type equations: let us mention,
above all, the contributions \cite{DGG, FG} on the stochastic thin-film equation,
\cite{S-SCH} on the stochastic Cahn-Hilliard equation with degenerate mobility,
and \cite{bauz-bon-leb} on the stochastic Allen-Cahn equation with single obstacle potential.

In general, the mathematical literature on stochastic phase-filed models
is becoming increasingly popular, both in the analytical and probabilistic communities.
We refer, for example, to the works \cite{HRW, orr-scar} on the stochastic Allen-Cahn equation,
and to \cite{daprato-deb, S-SCH2, scar-OVCSCH} on the stochastic Cahn-Hilliard equation,
as well as to the references therein.

The aim and novelty of the present paper is to investigate the elliptic 
Kolmogorov equations associated to the stochastic dynamics given by \eqref{eq_AC}
on the Hilbert space $H:=L^2(D)$.
The motivations are numerous. In particular, the Kolmogorov equation is intrinsically connected
with the long-time behaviour of solutions and ergodicity of the stochastic system \eqref{eq_AC}.
Indeed, provided to prove existence of invariant measures for the associated transition semigroup, 
the Kolmogorov operator is the natural candidate to be its respective infinitesimal generator.

For the stochastic Allen-Cahn equation \eqref{eq_AC}, 
setting $Q:=BB^*$ the Kolmogorov equation reads
\beq
  \label{eq_K}
  \alpha\varphi(x) - \frac12\operatorname{Tr}[Q(x)D^2\varphi(x)]
  +\left(-\Delta x + F'(x), D\varphi(x)\right)_H = g(x), \qquad
  x\in\A_{str},
\eeq
where $\alpha$ is a fixed positive constant, $g\in C^0_b(H)$ is a given forcing, and
\[
  \A_{str}:=\left\{v\in V\cap\A: -\Delta v + F'(v) \in H\right\}.
\]
Note that the nonlinear condition on $x\in\A_{str}$ is necessary.
Indeed, the singularity of the derivative $F'$ in \eqref{F_log}
forces the solution $u$ to take values in $(-1,1)$: consequently, the respective Kolmogorov 
equation \eqref{eq_K} only makes sense on the bounded subset $\A_{str}$ of $H$.

The main severely pathological behaviour of equation \eqref{eq_K} is that 
the second-order diffusion operator is {\em not} uniformly elliptic in space:
this is due to the degeneracy of $B$ at the boundary $\partial\A$, which is needed
in order to solve the SPDE \eqref{eq_AC}, as pointed out above. Of course, 
such degeneracy has important consequences on the mathematical analysis of 
\eqref{eq_K}, as in general one cannot expect to obtain solutions with some reasonable 
space-regularity. This inevitably calls for the introduction of weaker notions of solutions 
which are better suited to incorporate such lack of control: the idea is to employ 
so-called solutions {\`a la Friedrichs} (see Proposition~\ref{prop:class} below), 
which are defined, {\em roughly speaking}, as limits of classical solutions in suitable topologies.
Clearly, one needs to properly identify which is the natural functional setting in order to 
pass to the limit. In this direction, a preliminary study on long-time behaviour and ergodicity 
for the transition semigroup associated to \eqref{eq_AC} reveals that
every invariant measure is concentrated on the bounded subset $\A_{str}$.
This suggests that the natural functional setting that allows to pass to the limit 
in the sense of Friedrichs is the one of Lebesgue spaces 
associated to some invariant measure for the SPDE \eqref{eq_AC},
since invariant measures for \eqref{eq_AC} basically ``ignore'' the behaviour 
of $\varphi$ outside $\A_{str}$.

The second difficulty that comes in play concerns the multiplicative nature of the covariance operator $Q$.
Indeed, in order to pass to the limit in  the sense of Friedrichs, one has to 
sharply balance the convergence of some regularised operators $Q_{\lambda,n}$ to $Q$
with the explosion of the second derivatives of the respective classical solutions $\varphi_{\lambda,n}$,
as the regularisation parameters $\lambda$ and $n$ vanish.
Intuitively speaking, if one is able to show that the convergence rate of 
$Q_{\lambda,n}-Q$ dominates the explosion rate of $D^2\varphi_{\lambda,n}$,
a passage to the limit yields existence of solutions for the limit Kolmogorov equation \eqref{eq_K}
\`a la Friedrichs. Of course, this calls for a sharp analysis on the explosion and convergence 
rates of the approximating classical solutions with respect to their respective regularising parameters.

The literature on long-time behaviour and ergodicity for stochastic systems is extremely developed.
A very general study on ergodicity and Kolmogorov equations for 
stochastic evolution equations in variational form with additive noise was
carried out by Barbu and Da Prato \cite{BDP} in a very general setting.
Still in the framework of variational approach to ergodicity of SDPEs, we can 
mention the contributions \cite{MZ} on Poisson-type noise and \cite{MS-erg}
in the case of semilinear equations with singular drift.
An extensive literature on ergodicity and Kolmogorov equations 
in the mild setting has been growing in the last decades, for which we refer to 
the works \cite{cerrai, Dap, DapZab3, stann}.
In particular, in the context of semilinear reaction-diffusion equations 
existence and uniqueness of invariant measures, as well as moment estimates, 
are obtained in \cite{stann2, stann3}. For stochastic porous media equations
we refer to the recent contribution \cite{DGT}. Ergodicity for stochastic 
damped Schr\"odinger equation has been studied in \cite{DO, BFZ, BFZ2}, while 
long-time behaviour for Euler- and Navier-Stokes-type equations has been 
addressed, among many others, in \cite{BF, DD, GMR, HM, RZ}.

Concerning Kolmogorov equations with degenerate covariance operator $Q$, 
well-posedness results are significantly less developed. 
To the best of our knowledge, the main available contributions so far concern the 
parabolic Kolmogorov equations associated to semilinear stochastic equations:
through the notion of generalised solutions 
existence is obtained ``by hand'' via regular dependence of the SPDE on the initial datum,
by exploiting some suitable smoothness assumptions on the nonlinearities in play.
For further detail we refer the reader to \cite[Sec.~7.5]{DapZab2}.
In the same spirit, parabolic Kolmogorov equations associated to stochastic PDEs with 
multiplicative noise are dealt with in \cite{dap_mult}, still under appropriate
smoothness requirements on the coefficients or nondegeneracy conditions on the covariance.
More generally, the study of Kolmogorov equations associated to 
stochastic PDEs has become crucial in the last years in  the direction of
uniqueness and  regularisation by noise. 
Let us point out, above all, the recent contributions \cite{mau} on non-explosion 
for SDEs via Stratonovich noise and \cite{AMP} on a BSDE approach to uniqueness by noise.

Let us conclude by briefly summarise the content of the paper.
In Section~\ref{sec:frame} we introduce the mathematical setting, state the main assumptions, and
recall the available well-posedness results.
Section~\ref{sec:inv} is devoted then to the study 
of invariant measures and ergodicity for equation \eqref{eq_AC}: in particular, 
we show existence of (possibly ergodic and strongly mixing) invariant measures,
we provide sufficient conditions for uniqueness, and we characterise their support.
In Section~\ref{sec:kolm} we focus  on the Kolmogorov equation associated to \eqref{eq_AC}.
In particular, we first introduce the Kolmogorov operator, 
as well as some suitable regularised Kolmogorov equations,
depending on two approximating parameters. Secondly, we construct classical solutions
to such regularised equations ``by hand'', by exploiting appropriate 
regular dependence on the initial data for the corresponding regularised SPDEs. Eventually, 
we obtain uniform estimates on the approximated solutions and 
sharp blow-up rates on their derivatives, allowing us to prove existence of solution
for the original equation \eqref{eq_K} through a passage to the limit 
on a specific scaling of the parameters. This shows well-posedness \`a la Friedrichs 
for the Kolmogorov equation, and characterises the Kolmogorov operator as the 
infinitesimal generator of the transition semigroup in some Lebesgue space
associated to some invariant measure.
Eventually, Appendices ~\ref{app1}--\ref{app2} contain useful estimates on the 
stochastic Allen-Cahn equation \eqref{eq_AC} and a density result used in the proofs, respectively.

%%%%%%%%%%%%%%%%%%%%%%%%

\section{Mathematical framework}
\label{sec:frame}
\subsection{Notation and setting}
For any real Banach space $E$, we denote its dual by $E^*$. 
The duality pairing between $E$ and $E^*$ will be indicated by $\langle \cdot, \cdot\rangle_E$. 
For any real Hilbert space $H$ we denote by $\|\cdot\|_H$ and $(\cdot, \cdot)_H$ 
the norm and the scalar product respectively. Given any two Banach spaces 
$E$ and $F$, we use the symbol $\mathcal{L}(E,F)$ for the space of all linear bounded operators form $E$ to $F$. Furthermore, we write $E \hookrightarrow F$, if $E$ is continuously embedded in $F$.
If $H$ and $K$ are separable Hilbert spaces, we employ the symbol
$\LL_{HS}(H, K)$ for the space of Hilbert-Schmidt operators from $H$ to $K$. 
For any topological space $E$, the Borel $\sigma$-algebra on $E$ 
is denoted by $\mathcal{B}(E)$. All measures on $E$ are intended to be defined on its Borel $\sigma$-algebra. The spaces of bounded Borel-measurable and 
bounded continuous functions on $E$ will be denoted by $B_b(E)$ and $C_b^0(E)$ respectively.

If $(A, \mathcal{A}, \mu)$ is a finite measure space, we denote by $L^p(A;E)$ 
the space of $p$-Bochner integrable functions, for any $p \in [1, \infty)$. 
For a fixed $T>0$, we denote by $C^0([0,T];E)$ the space of strongly continuous functions from $[0,T]$ to $E$. 

If quantities  $a,b\ge 0$ satisfy the inequality $a \le C(A) b$ with a 
constant $C(A)>0$ depending on the expression $A$, we write $a \lesssim_A b$; 
for a generic constant we put no subscript.
If we have $a \lesssim_A b$ and $b \lesssim_A a$, we write $a \simeq_A  b$.

Throughout the paper, $D \subset \mathbb{R}^d$, $d=2,3$, is a bounded domain with Lipschitz boundary $\Gamma$
and Lebesgue measure denoted by $|D|$. The coefficients 
$\alpha_d,\alpha_n\in\{0,1\}$ are such that $\alpha_d+\alpha_n=1$:
the case $(\alpha_d,\alpha_n)=(1,0)$ corresponds to Dirichlet boundary conditions, 
while $(\alpha_d,\alpha_n)=(0,1)$ yields Neumann boundary conditions.
We introduce the functional spaces
\[
  H:=L^2(D)
\]
and
\begin{align*}
  V&:=
  \begin{cases}
  H^1_0(D) \quad&\text{if } (\alpha_d,\alpha_n)=(1,0),\\
  H^1(D) \quad&\text{if } (\alpha_d,\alpha_n)=(0,1),
  \end{cases}
  \\
  Z&:=
  \begin{cases}
  H^2(D)\cap H^1_0(D) \quad&\text{if } (\alpha_d,\alpha_n)=(1,0),\\
  \{v\in H^2(D):\partial_{\bf n}v=0 \text{ a.e.~on } \Gamma\} \quad&\text{if } (\alpha_d,\alpha_n)=(0,1).
  \end{cases}
\end{align*}
all endowed with their natural respective norms $\|\cdot\|_H$, $\|\cdot\|_V$, and $\|\cdot\|_{Z}$.
Identifying the Hilbert space $H$ with its dual through the Riesz isomorphism, 
we have the following continuous, dense and compact inclusions
\begin{equation*}
Z \hookrightarrow V \hookrightarrow H \simeq H^* \hookrightarrow V^* \hookrightarrow Z^*.
\end{equation*} 
In particular, $(V,H,V^*)$ constitutes a Gelfand triple. 
The norm of the continuous inclusion $V\embed H$
will be denoted by $K_0$: note that $K_0$ can be estimated by means 
of Poincar\'e-type inequalities in terms of the first positive eigenvalue of the Laplacian.

We recall that the Laplace operator with homogeneous (Dirichlet or Neumann) conditions can be seen either as a variational operator
\begin{equation*}
-\Delta\in\LL(V,V^*), \qquad \langle -\Delta u, v\rangle_V:= \int_D \nabla u \cdot\nabla v, \quad  u,v \in V,
\end{equation*}
or as an unbounded linear operator on $H$ with effective domain $Z$.
In the sequel we will use the same symbol $-\Delta$ to denote the Laplace operator 
intended both as a variational operator and as an operator defined from $Z$ with values in $H$.

Let $(\Omega,\mathcal{F},\mathbb{P})$ be a probability space, 
$U$ a separable real Hilbert space, with a given orthonormal basis $(e_k)_{k\in\mathbb{N}}$, 
and $W$ a canonical cylindrical Wiener processes taking values in $U$
and adapted to a filtration $\mathbb{F}$ satisfying the usual conditions.
Given $p,q\in[1,+\infty)$, $T>0$, and a Banach space $E$,
we denote by the symbol  $L^p(\Omega; L^q(0,T; E))$ the space of $E$-valued progressively 
measurable processes $X:\Omega\times(0,T)\to E$ such that 
$\E(\int_0^T\norm{X(s)}_E^q\,\d s)^{p/q}<+\infty$. When 
$E$ is a separable Hilbert space, $p\in(1,+\infty)$, and $q=+\infty$, 
the symbol $L^p(\Omega; L^\infty(0,T; E^*))$
denotes the space of weak star measurable random variables 
$X:\Omega\to L^\infty(0,T; E^*)$ such that $\E\norm{X}_{L^\infty(0,T; E^*)}^p<+\infty$,
which by \cite[Thm.~8.20.3]{edwards} is isomorphic to the dual of 
$L^\frac{p}{p-1}(\Omega; L^1(0,T; E))$.

\subsection{Assumptions}
Let us state the set of Assumptions that will be used throughout the paper. 
We work in an analogous framework as the one of \cite{Ber}.

\begin{description} 
\item[H1] The potential $F:[-1,1]\to[0,\infty)$ satisfies the following conditions:
  \begin{description}
  \item[(i)] $F\in C^0([-1,1])\cap C^3(-1,1)$ and $F'(0)=0$,
  \item[(ii)] there exists $K>0$ such that $F''(r)\geq -K$ for all $r\in(-1,1)$,
  \item[(iii)] it holds that 
  \[
  \lim_{r\to(\pm1)^{\mp}}F'(r)=\pm\infty.
  \]
  \end{description}
In this setting, note that conditions {\bf(i)--(iii)} ensure the existence of
constants $C_0,C_1>0$ such that 
\begin{equation}
\label{F'_prop}
F'(r)r\ge C_0r^2-C_1.
\end{equation}
It is straightforward to see that the logarithmic potential \eqref{F_log}
(up to some additive constant) satisfies conditions {\bf (i)--(iii)}.

\item[H2] Let $\{h_k\}_{k \in \mathbb{N}} \subset C^1([-1,1])$ satisfy 
for every $k \in \mathbb{N}$ that $h_k(\pm 1)=0$ and 
\begin{equation}
\label{C_B}
C_B := \sum_{k \in \mathbb{N}}
\left(\|h_k\|^2_{C^1([-1,1])} +
\norm{h_k^2F''}_{L^\infty(-1,1)} \right) < \infty.		
\end{equation}
Setting $\A:= \{v \in H:|v({\bf x})| \le 1 \text{ for a.e.~}{\bf x}\in D\}$, 
condition \eqref{C_B} implies that
the operator 
\begin{equation}
\label{B}
B:\A \rightarrow \mathcal{L}_{HS}(U,H), \qquad
B(x)e_k:=h_k(x), \quad x \in \A, \quad k \in \mathbb{N},
\end{equation}
is well-defined and Lipschitz-continuous. Indeed, this amounts 
to saying that 
\[
B(x)e:= \sum_{k \in \mathbb{N}}(e,e_k)_Uh_k(x), \quad x \in \A, \ e \in U,
\]
and \eqref{C_B} yields by a direct computation (see e.g.~\cite[Sec.~2]{Ber}) that 
\begin{align}
  \label{HS_norm}
  \|B(x)\|_{\mathcal{L}_{HS}(U,H)}^2 &\le C_B|D| \qquad\forall\, x \in \A,\\
  \|B(x)-B(y)\|_{\mathcal{L}_{HS}(U,H)}^2 &\le C_B|D|\|x-y\|^2_H \qquad\forall\, x, y \in \A.
\end{align}
\end{description}

\subsection{Well posedness results}

The existence and uniqueness of solutions to problem \eqref{eq_AC} is proved in \cite{Ber}
in the case of Neumann boundary conditions and exclusively for 
relevant case of logarithmic potential \eqref{F_log}.
One can easily check that the same results hold true in the case of Dirichlet boundary conditions
and under the more general assumption {\bf H1} for $F$, by using \eqref{C_B} (see e.g.~\cite{S-SCH} for details).
We recall here the main well posedness results.

\begin{definition}
\label{var_sol_def}
Let 
\begin{equation}
  \label{u0_var}
  u_0 \in L^2(\Omega,\cF_0; H), \qquad \P\{u_0\in \A\}=1.
\end{equation}
A variational solution to problem \eqref{eq_AC} is a process $u$ such that, for every $T>0$,
\begin{align}
\label{reg_u}
u &\in L^2(\Omega;C([0,T];H))\cap L^2(\Omega;L^2(0,T;V)),\\
\label{reg_F'}
F'(u) &\in L^2(\Omega;L^2(0,T;H)), 
\end{align}
and for all $\psi \in V$ it holds that, for every $t \ge0$, $\mathbb{P}$-a.s., 
\begin{align}
\label{var_sol}
\int_D u(t)\psi  &+ \nu\int_0^t \int_D \nabla u(s)\cdot\nabla \psi(s)\, {\rm d}s 
+ \int_0^t \int_D F'(u(s))\psi\, {\rm d}s
\notag\\
&= \int_D u_0\psi 
+ \int_D\left( \int_0^t B(u(s))\,{\rm d}W(s)\right) \psi.
\end{align}
\end{definition}

\begin{definition}
\label{strong_sol_def}
Let 
\begin{equation}
  \label{u0_strong}
  u_0 \in L^2(\Omega,\cF_0; V), \qquad \P\{u_0\in \A\}=1.
\end{equation}
An analitically strong solution to problem \eqref{eq_AC} is a process $u$ such that, for every $T>0$,
\begin{align}
\label{reg_u_str}
u &\in L^2(\Omega;C([0,T];H))\cap L^2(\Omega;L^{\infty}(0,T;V))\cap L^2(\Omega;L^2(0,T;Z)),\\
\label{reg_F'_str}
F'(u) &\in L^2(\Omega;L^2(0,T;H)), 
\end{align}
and it holds that, for every $t \ge0$, $\mathbb{P}$-a.s., 
\begin{align}
\label{strong_sol}
u(t)-\nu\int_0^t\Delta u(s)\, {\rm d}s + \int_0^t F'(u(s))\, {\rm d}s
&=u_0 +  \int_0^t B(u(s))\,{\rm d}W(s).
\end{align}
\end{definition}

The well-posedness result following from \cite[Thm.~2.1]{Ber} is the following.
\begin{theorem}
\label{ex_uniq_sol}
Assume {\bf H1--H2}.
For every $u_0$ satisfying \eqref{u0_var} 
there exists a unique variational solution to \eqref{eq_AC} in the sense of Definition~\ref{var_sol_def}. 
Furthermore, for every $T>0$ there exists a positive constant $C_T$ such that, 
for every initial data $u_0^1, u_0^2$ satisfying \eqref{u0_var} 
the respective variational solutions $u_1, u_2$ of \eqref{eq_AC} satisfy
\begin{equation}
\label{difference}
\|u_1-u_2\|_{L^2(\Omega;C([0,T],H)) \cap L^2(\Omega;L^2(0,T,V))} \le C_T
\|u_0^1-u_0^2\|_{L^2(\Omega;H)}.
\end{equation}
Moreover, for every $u_0$ satisfying \eqref{u0_strong}
there exists a unique analytically strong solution to \eqref{eq_AC} in the sense of Definition~\ref{strong_sol_def}.
\end{theorem}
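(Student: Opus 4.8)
The plan follows the standard scheme for SPDEs with thermodynamically consistent singular potentials. \textbf{Step 1 (regularisation).} First I would replace $F'$ by its Yosida approximation: writing $F(r)=G(r)-\tfrac K2 r^2$ with $G$ proper, convex, lower semicontinuous ($G:=F+\tfrac K2(\cdot)^2$ on $[-1,1]$, extended by $+\infty$ elsewhere) and Yosida-regularising $G$, one obtains for each $\lambda\in(0,1)$ a globally Lipschitz $F'_\lambda$ with $F''_\lambda\ge-K$, $F'_\lambda(0)=0$, $0\le F_\lambda\le F$, and the analogue of \eqref{F'_prop} holding uniformly in $\lambda$. Simultaneously, since $B$ is only defined on $\A$, I would extend it to a globally Lipschitz $\tilde B:H\to\LL_{HS}(U,H)$ with the same constants as in \eqref{HS_norm} by precomposing with the metric projection onto the closed convex set $\A$. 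The regularised equation, now with globally Lipschitz coefficients, is well posed in the sense of \eqref{strong_sol} by a contraction argument (or a Galerkin scheme), with the $Z$-regularity of \eqref{reg_u_str} available when $u_0\in V$.

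\textbf{Step 2 (uniform estimates).} Itô's formula applied to $\norm{u_\lambda(t)}_H^2$ produces $2\nu\int_0^t\norm{\nabla u_\lambda}_H^2$ from the Laplacian, a term bounded below by $-2K\norm{u_\lambda}_H^2$ from the drift (here $F'(0)=0$ from \textbf{H1}(i) is used), and a quadratic variation bounded by $C_B\abs D$ via \eqref{HS_norm}; Burkholder--Davis--Gundy and Gronwall then bound $u_\lambda$ in $L^2(\Omega;C([0,T];H))\cap L^2(\Omega;L^2(0,T;V))$, uniformly in $\lambda$. Controlling $F'_\lambda(u_\lambda)$ is the delicate point, because for variational solutions only $u_0\in H$ is assumed, so the full energy is not available at $t=0$; instead I would apply Itô to the potential part $\int_DF_\lambda(u_\lambda)$ alone. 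Its $H$-gradient is $F'_\lambda(u_\lambda)$, so the drift contributes $-\nu\int_0^t\int_DF''_\lambda(u_\lambda)\abs{\nabla u_\lambda}^2-\int_0^t\norm{F'_\lambda(u_\lambda)}_H^2$, the first summand being $\le\nu K\int_0^t\norm{\nabla u_\lambda}_H^2$ by \textbf{H1}(ii), while the Itô correction equals $\tfrac12\sum_k\int_Dh_k^2(u_\lambda)F''_\lambda(u_\lambda)$ and is $\le\tfrac12 C_B\abs D$ precisely because the degeneracy $h_k(\pm1)=0$ encoded in \eqref{C_B} compensates the blow-up of $F''$. Since $F_\lambda\ge0$ and, crucially, $\E\int_DF_\lambda(u_0)\le\abs D\max_{[-1,1]}F<\infty$ because $F$ is bounded on $[-1,1]$ and $u_0\in\A$, this yields a bound for $F'_\lambda(u_\lambda)$ in $L^2(\Omega;L^2(0,T;H))$ uniform in $\lambda$.

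\textbf{Step 3 (limit passage, uniqueness, continuous dependence).} With these bounds I would extract weakly, or weakly-$*$, convergent subsequences, and establish tightness of the laws of $(u_\lambda)$ in $C([0,T];H)\cap L^2(0,T;H)$ via an Aubin--Lions--Simon argument on the drift part together with the factorisation method for the stochastic convolution; Skorokhod's theorem then provides a.s.\ convergent copies on a new probability space. Strong convergence in $L^2(0,T;H)$ plus the uniform $L^2$-bound on $F'_\lambda(u_\lambda)$ forces $\abs u\le1$ a.e., i.e.\ $u\in\A$, hence $\tilde B(u)=B(u)$; and the monotonicity of $r\mapsto F'(r)+Kr$ together with the usual strong--weak closedness lemma identifies the weak limit of $F'_\lambda(u_\lambda)$ with $F'(u)$. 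Passing to the limit in the linear and martingale terms (the latter via a martingale representation theorem) produces a variational solution in the sense of Definition~\ref{var_sol_def}. Uniqueness and \eqref{difference} follow by applying Itô to $\norm{u_1(t)-u_2(t)}_H^2$ for two solutions with data $u_0^1,u_0^2$: the Laplacian term has the favourable sign, the drift difference satisfies $\int_D(F'(u_1)-F'(u_2))(u_1-u_2)\ge-K\norm{u_1-u_2}_H^2$ by \textbf{H1}(ii) (no singularity survives, as both $u_i$ are $\A$-valued), and the quadratic variation is $\le C_B\abs D\norm{u_1-u_2}_H^2$ by \eqref{HS_norm}; BDG and Gronwall close the argument, and integrating the gradient term gives the $L^2(0,T;V)$ part of \eqref{difference} as well.

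\textbf{Step 4 (analytically strong solutions, and main obstacle).} For $u_0\in V\cap\A$ one has $\mathcal{E}(u_0)<\infty$, so I would now apply Itô to the full energy $\mathcal{E}(u_\lambda)$: its drift equals $-\norm{-\nu\Delta u_\lambda+F'_\lambda(u_\lambda)}_H^2\le0$, while the Itô correction $\tfrac12\sum_k\big(\nu\norm{\nabla(h_k(u_\lambda))}_H^2+\int_Dh_k^2(u_\lambda)F''_\lambda(u_\lambda)\big)$ is bounded by $\tfrac{\nu C_B}2\norm{\nabla u_\lambda}_H^2+\tfrac{C_B\abs D}2\le C_B\mathcal{E}(u_\lambda)+\tfrac{C_B\abs D}2$ using the $C^1$-part of \eqref{C_B} and $F\ge0$; Gronwall yields a uniform bound in $L^2(\Omega;L^\infty(0,T;V))$ and, reading $-\nu\Delta u+F'(u)\in L^2(\Omega;L^2(0,T;H))$ off the energy balance, the $L^2(\Omega;L^2(0,T;Z))$ regularity follows by pairing with $-\Delta u$, invoking \textbf{H1}(ii) once more and elliptic regularity; lower semicontinuity transfers these bounds to the limit. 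I expect the genuinely delicate points to be (i) the identification of the singular limit $F'(u)$ jointly with the proof that the limit is $\A$-valued, and (ii) making the energy and $Z$-regularity estimates rigorous in the presence of the multiplicative noise --- in practice one runs them at the Galerkin level, or under a stopping-time localisation, before removing the approximation.
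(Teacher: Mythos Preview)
The paper does not actually prove this theorem; it quotes it from \cite{Ber} (with the remark that the extension from the logarithmic potential with Neumann conditions to the general hypotheses {\bf H1}--{\bf H2} and either boundary condition is routine, cf.~\cite{S-SCH}). Your sketch follows precisely the scheme of \cite{Ber}: Yosida regularisation of the monotone part $\beta=F'+K\cdot$ of the drift, uniform estimates via It\^o's formula on $\|\cdot\|_H^2$ and on $\int_DF_\lambda(\cdot)$ (the latter being exactly the computation the paper reproduces in Lemma~\ref{lem:F'}), identification of the limiting nonlinearity by strong--weak closedness of maximal monotone operators, and the energy estimate of your Step~4, which is the content of Lemma~\ref{H^2_est}.

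The one point where your route diverges, and where there is a genuine (though easily repaired) gap, is the limit passage in Step~3. Going through tightness and Skorokhod's theorem yields a solution on a \emph{new} probability space driven by a \emph{new} Wiener process, i.e.\ a martingale solution, whereas Definitions~\ref{var_sol_def}--\ref{strong_sol_def} ask for a solution on the given $(\Omega,\cF,\P,W)$. As written you would still need a Gy\"ongy--Krylov argument: the pathwise uniqueness you establish upgrades convergence in law of $(u_\lambda)_\lambda$ to convergence in probability on the original space. The argument in \cite{Ber} bypasses stochastic compactness altogether. Writing $a-b=(J_\lambda a-J_\mu b)+\lambda\beta_\lambda(a)-\mu\beta_\mu(b)$ and using the monotonicity of $\beta$ on the resolvents gives
\[
(\beta_\lambda(u_\lambda)-\beta_\mu(u_\mu),\,u_\lambda-u_\mu)_H\;\ge\;-\,(\lambda+\mu)\big(\|\beta_\lambda(u_\lambda)\|_H^2+\|\beta_\mu(u_\mu)\|_H^2\big),
\]
so that It\^o's formula for $\|u_\lambda-u_\mu\|_H^2$, combined with the uniform $L^2(\Omega\times(0,T);H)$ bound on $F'_\lambda(u_\lambda)$ from your Step~2, shows directly that $(u_\lambda)_\lambda$ is Cauchy in $L^2(\Omega;C([0,T];H))\cap L^2(\Omega;L^2(0,T;V))$. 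One thus obtains strong convergence on the original probability space, the stochastic integral passes to the limit simply by the Lipschitz continuity of $B$ (no martingale representation needed), and the identification of $F'(u)$ and the constraint $u\in\A$ then proceed exactly as you describe.
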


\section{Invariant measures}
\label{sec:inv}
This section is devoted to the long-time analysis of 
the stochastic equation \eqref{eq_AC}, in terms of existence-uniqueness
of invariant measures and ergodicity.
Before moving on, we recall some general definitions that will be used in the sequel.

For every $x\in \A$, the unique variational solution 
to equation \eqref{eq_AC} as given in Theorem~\ref{ex_uniq_sol} will be denoted by $u^x$,
and for every $t\in[0,T]$ we set  $u(t;x):=u^x(t)$ for its value at time $t$.
Note that for every $t\in[0,T]$ $u(t;x):\Omega\to H$ is a random variable in $L^2(\cF_t; H)$.

The main issue in defining the concept of invariant measure in our framework 
is that equation \eqref{eq_AC} can be solved only if the initial datum 
satisfies a nonlinear type condition (see \eqref{u0_var}).
In this direction, it is useful to extend $F$ to $+\infty$ outside $[-1,1]$,
and obtain a proper convex lower semicontinuous function $F:\erre\to[0,+\infty]$.
With this notation, we have the characterisation (see again {\bf H2})
\begin{equation}
\label{mat_A}
\mathcal{A}=\{x \in H : \|x\|_{L^\infty(D)}\leq 1\} = \{x \in H : F(x)\in L^1(D)\}.
\end{equation}
We claim that $\mathcal A$ is a Borel subset of $H$. Indeed, one has that 
\[
  \mathcal A=\bigcup_{n\in\enne}\left\{x\in H: \ \int_DF(x) \leq n\right\},
\]
where the right-hand side is a countable union of closed sets in $H$
by lower semicontinuity of $F$, hence is a Borel subset of $H$.
The equality in \eqref{mat_A} follows from the fact that the domain of $F$ is exactly $[-1,1]$.

We consider on $\mathcal{A}$ the metric $\dd$ given by the restriction to $\mathcal{A}$ 
of the metric on $H$ induced by the $H$-norm. Being $\mathcal{A}$ a closed subspace 
of the complete separable metric space $(H, \|\cdot\|_H)$, $(\mathcal{A},\dd)$ is also complete and separable.

The space $(\mathcal{A}, \dd)$ is therefore a separable complete metric space. We denote by 
$\cB(\mathcal{A})$ the $\sigma$-algebra of all Borel subsets of $\mathcal{A}$ and by $\cP(\mathcal{A})$ 
the set of all probability measures on $(\mathcal{A}, \cB(\mathcal{A}))$. 
Also, the symbol $\cB_b(\A)$ denotes the space of Borel measurable bounded functions from $\A$ to $\erre$. 
If $A \in \cB(\mathcal{A})$, we denote by $A^C$ its complement.

With this notation and by virtue of Theorem~\ref{ex_uniq_sol}, we can introduce the family of operators
%the transition semigroup 
$P:=(P_t)_{t \ge 0}$ associated to equation \eqref{eq_AC} as 
\begin{equation}
\label{P_t}
(P_t\varphi)(x):= \mathbb{E}[ \varphi(u(t;x))], \quad x\in\A,\quad \varphi\in \cB_b(\mathcal{A}).
\end{equation}

\begin{remark}
Let us point out once more that, due to the nonlinear nature of the problem, 
the solution of equation \eqref{eq_AC} exists on $\mathcal{A}$, hence
the transition semigroup can only make sense as a family of operators acting on $\cB_b(\mathcal{A})$,
and not on $\cB_b(H)$ as in more classical cases.
\end{remark}

It is clear that $P_t\varphi$ is bounded 
for every $\varphi \in \cB_b(\mathcal{A})$. We know from \cite[Cor.~23]{On2005}
that the transition function is jointly measurable, that is for any $A\in\cB(\mathcal{A})$ 
the map $\mathcal{A} \times [0,\infty)\ni  (x,t)\mapsto \mathbb{P}\{u(t;x)\in A\}\in \mathbb{R}$ is measurable. So $P_t\varphi$ is also measurable for every $\varphi \in \mathcal{B}_b(\mathcal{A})$, 
hence $P_t$ maps $\cB_b(\mathcal{A})$ into itself for every $t \ge0$.
Furthermore, since the unique solution of \eqref{eq_AC} is an $H$-valued continuous process, 
then it is also a Markov process, see \cite[Theorem 27]{On2005}. 
Therefore we deduce that the family of operators $\{P_t\}_{t\ge0}$ is a Markov semigroup, 
namely $P_{t+s}=P_tP_s$ for any $s,t\ge0$.

We are ready to give the precise definition of invariant measure.
\begin{definition}
  An invariant measure for the transition semigroup $P$ 
  is a probability measure $\mu\in\cP(\A)$ such that 
  \[
  \int_\A \varphi(x)\,\mu(\d x) = \int_\A P_t\varphi(x)\,\mu(\d x) \quad\forall\,t\geq0,\quad\forall\,\varphi\in C_b(\A).
  \]
\end{definition}

\subsection{Existence of an invariant measure}
We focus here on showing that $P$ admits at least an invariant measure.
The main idea is to use an adaptation of the Krylov-Bogoliubov theorem
to the case of complete separable metric spaces, which we prove here for clarity.
The proof is an adaptation of the one 
in the more classical Hilbert space setting, which can be found in \cite[Thm.~11.7]{DapZab}.

\begin{theorem}[Krylov-Bogoliubov]
\label{KriBou}
Let $R:=\{R_t\}_{t\ge 0}$ be a time-homogeneous Markov semigroup 
on the complete separable metric space $(\mathcal{A}, \dd)$. Assume that
\\
i) the semigroup $\{R_t\}_{t\ge 0}$ is Feller in $\mathcal{A}$;
\\
ii) for some $x_0 \in \mathcal{A}$, the set $(\mu_t)_{t>0}\subset \cP(\mathcal{A})$ given by
\begin{equation}
\label{mu_t}
\mu_t (A):= \frac 1t \int_0^t (R_s\pmb{1}_{A})(x_0)\, {\rm d}s, \qquad A \in \cB(\mathcal{A}), \ t>0,
\end{equation}
is tight. Then there exists  at least one invariant measure for $R$.
\end{theorem}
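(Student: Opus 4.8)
The plan is to follow the classical Krylov--Bogoliubov argument, adapted to the complete separable metric space $(\A,\dd)$ in place of a Hilbert space. The three ingredients are: a compactness argument to extract a weakly convergent subsequence of the time averages $(\mu_t)_{t>0}$; the Feller property to pass to the limit in the defining identity of invariance; and an averaging trick to produce the invariance. Throughout I would work with the notion of weak convergence of probability measures on the metric space $(\A,\dd)$, for which all the standard machinery (Prokhorov's theorem, the portmanteau theorem) is available precisely because $(\A,\dd)$ is separable and complete.

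First I would fix the point $x_0\in\A$ from hypothesis (ii) and consider the family $(\mu_t)_{t>0}\subset\cP(\A)$ defined in \eqref{mu_t}. By assumption (ii) this family is tight, so by Prokhorov's theorem on the Polish space $(\A,\dd)$ there exist a sequence $t_n\uparrow+\infty$ and a probability measure $\mu\in\cP(\A)$ such that $\mu_{t_n}\to\mu$ weakly, i.e.
\[
  \int_\A\varphi(x)\,\mu_{t_n}(\d x)\longrightarrow\int_\A\varphi(x)\,\mu(\d x)\qquad\forall\,\varphi\in C_b(\A).
\]
The candidate invariant measure is this $\mu$. Next I would record the elementary identity, valid for every $\varphi\in C_b(\A)$, every $s\geq0$ and every $t>0$,
\[
  \int_\A (R_s\varphi)(x)\,\mu_t(\d x)
  = \frac1t\int_0^t (R_{r+s}\varphi)(x_0)\,\d r
  = \frac1t\int_s^{t+s}(R_r\varphi)(x_0)\,\d r,
\]
which follows from the semigroup property $R_{r+s}=R_rR_s$ and the definition of $\mu_t$; here one uses that $R_s\varphi\in C_b(\A)\subset\cB_b(\A)$ by the Feller property, so that the integral on the left makes sense. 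Subtracting $\int_\A\varphi\,\d\mu_t=\frac1t\int_0^t(R_r\varphi)(x_0)\,\d r$ and using $|(R_r\varphi)(x_0)|\le\|\varphi\|_{C_b(\A)}$ for all $r$, one gets
\[
  \left|\int_\A (R_s\varphi)(x)\,\mu_t(\d x)-\int_\A\varphi(x)\,\mu_t(\d x)\right|
  = \frac1t\left|\int_t^{t+s}(R_r\varphi)(x_0)\,\d r-\int_0^s(R_r\varphi)(x_0)\,\d r\right|
  \le \frac{2s}{t}\,\|\varphi\|_{C_b(\A)}.
\]
Evaluating along $t=t_n$ and letting $n\to\infty$, the right-hand side tends to $0$; on the left-hand side, $\int_\A\varphi\,\d\mu_{t_n}\to\int_\A\varphi\,\d\mu$ by weak convergence, and $\int_\A(R_s\varphi)\,\d\mu_{t_n}\to\int_\A(R_s\varphi)\,\d\mu$ because $R_s\varphi\in C_b(\A)$ again by the Feller assumption (i). Hence $\int_\A(R_s\varphi)\,\d\mu=\int_\A\varphi\,\d\mu$ for every $\varphi\in C_b(\A)$ and every $s\geq0$, which is exactly the invariance of $\mu$ for $R$.

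The step that requires the most care — and the only place where the metric-space setting differs in substance from the Hilbert-space statement of \cite[Thm.~11.7]{DapZab} — is making sure the classical tools are legitimately invoked on $(\A,\dd)$: namely that Prokhorov's theorem (tightness $\Rightarrow$ relative weak sequential compactness) holds, which is guaranteed since $(\A,\dd)$ is Polish, and that $R_s$ maps $C_b(\A)$ into $C_b(\A)$, which is the content of the Feller hypothesis (i) and is what allows the double limit in the last step. A minor technical point I would not belabor is the measurability in $r$ of $r\mapsto(R_r\varphi)(x_0)$, needed for the integrals above to be well-defined; this is part of the standing joint-measurability of the transition function recalled before the statement. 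No further regularity of the semigroup is needed: stochastic continuity or strong Feller properties play no role here, only the plain Feller property and tightness.
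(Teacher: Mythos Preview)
Your proof is correct and follows essentially the same approach as the paper: Prokhorov's theorem on the Polish space $(\A,\dd)$ to extract a weak limit $\mu$ of the time averages, the Feller property to test weak convergence against $R_s\varphi\in C_b(\A)$, and the same telescoping-integral estimate to kill the remainder. The only difference is cosmetic: you bound $\int_\A R_s\varphi\,\d\mu_t-\int_\A\varphi\,\d\mu_t$ directly, whereas the paper first identifies the limit of $\frac{1}{t_n}\int_0^{t_n}(R_{t+s}\psi)(x_0)\,\d t$ and then matches it with \eqref{lim0}.
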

\begin{proof}
By the Prokhorov Theorem (see e.g. \cite[Vol.~II, Thm~8.6.2]{Bog}) 
there exists a subsequence $\{t_n\}_n$ with $\lim_{n\to\infty}t_n = \infty$ 
and a probability measure $\mu \in \cP(\mathcal{A})$ such that
\begin{equation*}
\lim_{n \rightarrow \infty}\int_{\mathcal{A}}
 \varphi(x)\,\mu_{t_n}(\d x) = \int_{\mathcal{A}}\varphi(x)\, \mu(\d x) \qquad \forall\, \varphi \in C_b(\mathcal{A}).
\end{equation*}
By \eqref{mu_t} and the Fubini Theorem the above expression is equivalent to
\begin{equation}
\label{lim0}
\lim_{n \rightarrow \infty}\frac{1}{t_n}
\int_0^{t_n} (R_t\varphi)(x_0)\,{\rm d}t = 
\int_{\mathcal{A}}\varphi(x)\, \mu(\d x) \qquad \forall \, \varphi \in C_b(\mathcal{A}).
\end{equation}
Given $s\ge 0$ and $\psi\in C_b(\A)$, we have that $R_s\psi\in C_b(\A)$ by the Feller property.
Hence, we can choose $\varphi=R_s \psi$ in \eqref{lim0} and infer that
\begin{equation}
\label{lim}
\lim_{n \rightarrow \infty}\frac{1}{t_n}\int_0^{t_n} 
(R_{t+s}\psi)(x_0)\,{\rm d}t = 
\int_{\mathcal{A}}R_s\psi(x)\, \mu(\d x) \qquad \forall \, \psi \in C_b(\mathcal{A}).
\end{equation}
Now, bearing in mind equality \eqref{lim0} we have
\begin{align*}
&\frac{1}{t_n}\int_0^{t_n} (R_{t+s}\psi)(x_0)\,{\rm d}t 
= \frac{1}{t_n}\int_s^{s+t_n} (R_{t}\psi)(x_0)\,{\rm d}t 
\\
&\qquad= 
\frac{1}{t_n}\int_0^{t_n} (R_{t}\psi)(x_0)\,{\rm d}t + 
\frac{1}{t_n}\int_{t_n}^{s+t_n} (R_{t}\psi)(x_0)\,{\rm d}t -  
\frac{1}{t_n}\int_0^{s} (R_{t}\psi)(x_0)\,{\rm d}t \\
&\qquad \longrightarrow \int_{\mathcal{A}}\psi(x)\,\mu(\d x) \qquad \text{as} \ t_n \rightarrow \infty.
\end{align*}
Taking this into account in the left had side of \eqref{lim} shows that $\mu$ is invariant.
\end{proof}

We are now ready to show that the transition semigroup $P$
of equation \eqref{eq_AC} admits invariant measures.
\begin{theorem}
\label{th:mis-inv-2d}
Assume {\bf H1--H2}. Then, the transition semigroup $P$ is Feller and
admits at least one invariant measure.
\end{theorem}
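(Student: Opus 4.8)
The plan is to invoke the Krylov--Bogoliubov Theorem~\ref{KriBou} on the complete separable metric space $(\A,\dd)$, so that the proof reduces to two verifications: (i) $P=(P_t)_{t\ge0}$ is Feller on $\A$; (ii) for a suitable $x_0\in\A$ the family of time averages $(\mu_t)_{t>0}$ from \eqref{mu_t} is tight. Both the Feller property and the existence of an invariant measure would then follow at once.

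\emph{Feller property.} I would deduce it from the continuous-dependence estimate \eqref{difference}, which for fixed $t\ge0$ says exactly that $(\A,\dd)\ni x\mapsto u(t;x)\in L^2(\Omega;H)$ is Lipschitz continuous. If $x_n\to x$ in $\A$, then $u(t;x_n)\to u(t;x)$ in $L^2(\Omega;H)$, hence in probability, so along any subsequence there is a further one converging $\P$-a.s.; since $u(t;\cdot)\in\A$ $\P$-a.s.\ and $\varphi\in C_b(\A)$ is bounded and continuous, dominated convergence gives $\E[\varphi(u(t;x_{n_k}))]\to\E[\varphi(u(t;x))]$ along that subsequence, and the standard subsequence-of-subsequence principle upgrades this to $P_t\varphi(x_n)\to P_t\varphi(x)$. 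Hence $P_t(C_b(\A))\subseteq C_b(\A)$, i.e.\ $P$ is Feller (that $P_t$ maps bounded Borel functions into themselves was already noted before the statement).

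\emph{A priori bound.} I would fix $x_0\in V\cap\A$ (e.g.\ $x_0=0$) and put $u:=u(\cdot;x_0)$. Applying It\^o's formula to $\|u\|_H^2$ (rigorously via Galerkin approximation, cf.\ Appendix~\ref{app1}), estimating $-(F'(u),u)_H\le C_1|D|$ by \eqref{F'_prop} and $\|B(u)\|_{\mathcal{L}_{HS}(U,H)}^2\le C_B|D|$ by \eqref{HS_norm}, and taking expectations (the stochastic integral being a martingale), one obtains
\begin{equation*}
  \E\|u(t)\|_H^2 + 2\nu\int_0^t\E\|\nabla u(s)\|_H^2\,\d s \;\le\; \|x_0\|_H^2 + (C_B+2C_1)|D|\,t,\qquad t\ge0.
\end{equation*}
Together with the crude bound $\|u(s)\|_H^2\le|D|$ (since $u(s)\in\A$ $\P$-a.s.) and $\|\cdot\|_V^2\lesssim\|\cdot\|_H^2+\|\nabla\cdot\|_H^2$, this gives $\sup_{t\ge1}\tfrac1t\int_0^t\E\|u(s)\|_V^2\,\d s<\infty$; for $t\in(0,1]$ instead $\tfrac1t\int_0^t\E\|u(s)\|_V^2\,\d s=\tfrac1t\,\E\int_0^t\|u(s)\|_V^2\,\d s\le\E\|u\|_{L^\infty(0,1;V)}^2<\infty$, finite because $x_0\in V\cap\A$ puts us in the analytically strong regime of Theorem~\ref{ex_uniq_sol} (see \eqref{reg_u_str}). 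Hence $\mathcal{C}:=\sup_{t>0}\tfrac1t\int_0^t\E\|u(s)\|_V^2\,\d s<\infty$.

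\emph{Tightness and conclusion.} For $R>0$ the set $K_R:=\{v\in V\cap\A:\|v\|_V\le R\}$ is compact in $(\A,\dd)$, since closed balls of $V$ are compact in $H$ (as $V\embed H$ compactly) and $\A$ is closed in $H$. By Markov's inequality and \eqref{mu_t},
\begin{equation*}
  \mu_t(\A\setminus K_R)=\frac1t\int_0^t\P\{\|u(s;x_0)\|_V>R\}\,\d s\le\frac1{R^2}\,\frac1t\int_0^t\E\|u(s;x_0)\|_V^2\,\d s\le\frac{\mathcal{C}}{R^2}\qquad\text{for all }t>0,
\end{equation*}
so taking $R>\sqrt{\mathcal{C}/\eps}$ makes $\sup_{t>0}\mu_t(\A\setminus K_R)<\eps$; hence $(\mu_t)_{t>0}$ is tight, and Theorem~\ref{KriBou} produces an invariant measure. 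I expect the main obstacle to be the rigorous derivation of the energy balance and, above all, the control of the $V$-average near $t=0$: the latter genuinely relies on the higher regularity of analytically strong solutions, which is why the base point must be chosen in $V\cap\A$ rather than merely in $\A$.
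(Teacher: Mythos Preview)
Your proof is correct and follows essentially the same route as the paper: Feller from the continuous-dependence estimate \eqref{difference} (the paper passes via Vitali rather than dominated convergence along subsequences), tightness via the $V$-energy bound and Chebyshev, then Krylov--Bogoliubov. Note however that your split into small and large $t$ and the appeal to analytically strong regularity are unnecessary: with $x_0=0$ one has $\|x_0\|_H=0$, so your own energy estimate (which is exactly Lemma~\ref{tight}) already yields $\tfrac1t\int_0^t\E\|u(s;0)\|_V^2\,\d s\lesssim 1$ uniformly for \emph{all} $t>0$, and the paper proceeds exactly this way without ever invoking $x_0\in V$ or \eqref{reg_u_str}.
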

\begin{proof}
The result is a consequence of the Krylov-Bougoliuov Theorem~\ref{KriBou}, provided 
that we check that $P$ is Feller and the tightness property.\\
(i). Let us show first that $P$ is Feller: this follows directly 
the continuous dependence of the solution on the initial data. Indeed, 
let $t>0$ and $\varphi \in C_b(\mathcal{A})$ be fixed.
We have to prove that, given a sequence $(x_n )_n \subset \mathcal{A}$
which converges in $\mathcal{A}$ to
$x\in \A$ as $n \rightarrow \infty$,  the sequence $P_t\varphi(x_n)$ converges to
$P_t\varphi(x) $ as $n \rightarrow \infty$.
As a consequence of the continuous dependence property w.r.t.~the initial datum \eqref{difference}, we have that
\begin{equation*}
\|u(t;x_n)-u(t;x)\|_{L^2(\Omega; H)} \leq 
\|u^{x_n}-u^x\|_{L^2(\Omega; C([0,t];H))} \le C_t\|x_n-x\|_{H}
\end{equation*}
It follows that, as $n\to\infty$, $u(t;x_n)\rightarrow u(t;x)$ in $L^2(\Omega;H)$, 
hence also in probability.
This in turn implies that $\varphi(u(t;x_n))\rightarrow \varphi(u(t;x))$ in probability
by the continuity of $\varphi$. The boundedness of $\varphi$ and the Vitali Theorem 
yield in particular that $\varphi(u(t;x_n))\rightarrow \varphi(u(t;x))$ in $L^1(\Omega)$, and thus 
\begin{equation*}
|(P_t\varphi)(x_n)-(P_t \varphi)(x)| \le \mathbb{E}\left[\left |\varphi(u(t;x_n))- \varphi(u(t;x))\right| \right] \rightarrow 0,
\end{equation*}
as $n \rightarrow \infty$. This shows that $P$ is Feller.\\
(ii). We prove now that $P$ satisfies the tightness property of Theorem~\ref{KriBou}.
To this end, let $x_0=0 \in \mathcal{A}$ and let $u^{x_0}$ be the corresponding variational solution of problem \eqref{eq_AC}. We are going to show that the family of measures 
$(\mu_t)_{t>0}\subset \cP(\mathcal{A})$ defined by 
\begin{equation*}
\mu_t : A \mapsto \frac 1t \int_0^t (P_s\pmb{1}_{A})(0)\, {\rm d}s
=\frac 1t \int_0^t \mathbb{P}\left\{u(t;0) \in A \right\}\, {\rm d}s, \qquad A \in \cB(\mathcal{A}), \ t>0,
\end{equation*}
is tight.
Let $B_n$ be the closed ball in $V$ of radius $n \in \mathbb{N}$, and set $\bar B_n:= B_n \cap \mathcal{A}$.
Then $\bar B_n$ is a compact subset of $\mathcal{A}$ since the embedding $V \hookrightarrow H$ is compact. 
Hence, Lemma~\ref{tight} and the Chebychev inequality yield, for any $t>0$,
\begin{align*}
\mu_t(\bar B_n^C)
&=\frac 1t \int_0^t (P_s\pmb{1}_{\bar B_n^C})(0)\, {\rm d}s=
\frac 1t \int_0^t \mathbb{P}\left\{\|u(s;0)\|^2_V \ge n^2 \right\}\, {\rm d}s
\\
&\le \frac{1}{tn^2}\int_0^t \mathbb{E} \|u(s;0)\|^2_V\, {\rm d}s
\lesssim_{C_1, C_B, |D|, \nu} \frac{1}{n^2},
\end{align*}
from which
\begin{equation*}
\sup_{t>0}\mu_t(B_n^C) 
\lesssim_{C_1, C_B, |D|, \nu} \frac{1}{n^2}\rightarrow 0 \qquad \text{as} \ n\rightarrow \infty,
\end{equation*}
and the thesis follows.
\end{proof}

\subsection{Support of the invariant measures}
Once existence of invariant measures is establishes, we focus 
here on some qualitative properties of the invariant measures concerning their support.
In particular, we show that every invariant measure is supported in 
a more regular set than just $\A$. To this end, 
we introduce the set 
\begin{equation}
  \label{A_sr}
  \A_{str}:=\left\{x\in \A\cap Z: \ F'(x)\in H\right\}.
\end{equation}
Proceeding as for \eqref{mat_A} and exploiting the lower semicontinuity of $|F'|$, 
one can show that $\A_{str}$ is a Borel subset of $H$, hence of $\A$.

\begin{proposition}
\label{supp1}
Assume {\bf H1--H2}. Then, there exists a constant $C>0$,
only depending on $C_0$, $C_1$, $C_B$, $|D|$, $\nu$, $K$, and $K_0$, such that
every invariant measure $\mu \in \cP(\mathcal{A})$
for the transition semigroup $P$ satisfies 
\begin{equation}
\label{int_mu_V}
\int_{\mathcal{A}}\left(\|x\|_Z^2 + \|F'(x)\|_H^2\right)\, \mu({\rm d}x)\le C.
\end{equation}
In particular, every invariant measure $\mu$ is supported in $\A_{str}$, i.e.~$\mu(\A_{str})=1$.
\end{proposition}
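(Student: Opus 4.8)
The plan is to exploit the invariance of $\mu$ by testing the defining identity against a carefully chosen functional, namely (a bounded truncation of) the energy $\mathcal E$ and of $\|F'(x)\|_H^2$, and to transfer an a priori estimate for the SPDE \eqref{eq_AC} onto $\mu$. Concretely, I would start from the analytically strong formulation \eqref{strong_sol} for the solution $u^x$ with $x\in\A_{str}$ and apply It\^o's formula to $\frac\nu2\|\nabla u\|_H^2+\int_D F(u)=\mathcal E(u)$ (or more precisely to a regularised, bounded version of $\mathcal E$ — using the Moreau--Yosida approximations $F_\delta$ of the convex function $F$, so that everything is justified and then $\delta\to0$). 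Since $\mathcal E$ is the Lyapunov/Ginzburg--Landau functional of the equation, the drift term produces $-\|{-\nu\Delta u + F'(u)}\|_H^2$, which after using {\bf H1}(ii) to absorb the indefinite cross term and using \eqref{HS_norm} to bound the It\^o correction $\frac12\|B(u)\|_{\mathcal L_{HS}(U,H)}^2$-type trace terms by $C_B|D|$ plus lower-order contributions, yields, after taking expectations,
\begin{equation*}
  \E\,\mathcal E(u(t;x)) + c\int_0^t \E\bigl(\|u(s;x)\|_Z^2+\|F'(u(s;x))\|_H^2\bigr)\,\d s \le \E\,\mathcal E(x) + Ct
\end{equation*}
for constants $c>0$ and $C>0$ depending only on the data $C_0,C_1,C_B,|D|,\nu,K,K_0$. (The elliptic regularity estimate $\|u\|_Z^2\lesssim \|{-\Delta u}\|_H^2 + \|u\|_H^2$, together with the standard monotonicity trick $\|{-\Delta u}\|_H^2 + (F'(u),-\Delta u)_H \ge \|{-\Delta u}\|_H^2$ since $(F'(u),-\Delta u)_H = \int_D F''(u)|\nabla u|^2 \ge -K\|\nabla u\|_H^2$, lets one recover $Z$-regularity and $\|F'(u)\|_H$ from $\|{-\nu\Delta u+F'(u)}\|_H$.) These bounds are exactly the content of the appendix estimates referenced as Lemma~\ref{tight} and its companions, so I would simply cite those.

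Next I would integrate this in $x$ against $\mu$. The subtlety is that $\mu$ is a priori only supported in $\A$, where $\mathcal E$ may be $+\infty$ (if $F(x)\notin L^1(D)$) and where $u^x$ is not even defined. To handle this cleanly I would first argue that $\mu(\A)=1$ already forces, via a preliminary weaker moment bound, that $\mathcal E$ is $\mu$-integrable — or, more robustly, introduce the truncation $\mathcal E\wedge R$ and functionals like $\psi_R(x):=R\wedge(\|F'(x)\|_H^2)$, which are bounded and lower semicontinuous (hence $\mu$-measurable, and limits of continuous bounded functions by Appendix~\ref{app2}), so that the invariance identity $\int_\A \psi_R\,\d\mu = \int_\A P_t\psi_R\,\d\mu$ is legitimate. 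Dividing the energy inequality by $t$, using invariance $\int_\A \E\,\mathcal E(u(t;x))\,\mu(\d x)=\int_\A \mathcal E(x)\,\mu(\d x)$ (after truncation, the finite left-hand side cancels a finite right-hand side), one obtains
\begin{equation*}
  \int_\A \bigl(\|x\|_Z^2 + \|F'(x)\|_H^2\bigr)\wedge R \;\mu(\d x) \le \frac{C}{c},
\end{equation*}
uniformly in $R$ and $t$; letting $R\to\infty$ and invoking monotone convergence gives \eqref{int_mu_V}. The finiteness of this integral immediately implies $\|x\|_Z<\infty$ and $\|F'(x)\|_H<\infty$ for $\mu$-a.e.\ $x$, i.e.\ $x\in Z$ and $F'(x)\in H$; combined with $\mu(\A)=1$ this is precisely $\mu(\A_{str})=1$.

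The main obstacle is the justification of this "stationary It\^o" computation at the level of $\mu$: namely making rigorous the exchange of limits in $\delta$ (Yosida parameter), the truncation level $R$, and the cancellation $\int P_t\mathcal E\,\d\mu = \int\mathcal E\,\d\mu$ when both sides could a priori be infinite. The clean way around it is the two-step bootstrap: first establish $\int_\A \mathcal E(x)\,\mu(\d x)<\infty$ (equivalently $\mu(\A)=1$ upgraded to $V$-integrability) by the same argument applied to the lower-order energy $\tfrac\nu2\|\nabla x\|_H^2 + \int_D F(x)$ with the dissipation term dropped, which only needs a one-sided bound; and only then run the full estimate with the dissipative term retained to capture the $\|x\|_Z^2 + \|F'(x)\|_H^2$ contribution. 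A secondary technical point is ensuring the functionals involved are genuinely $\mu$-integrable / measurable — here I would lean on the lower semicontinuity of $x\mapsto \int_D F(x)$, of $\|\cdot\|_Z$ and of $\|F'(\cdot)\|_H$ on $H$ (the latter giving Borel measurability, already noted in the text for $\A_{str}$), together with the density result of Appendix~\ref{app2} to approximate by $C_b(\A)$ functions where the invariance definition strictly applies.
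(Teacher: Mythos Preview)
Your overall strategy---truncation, invariance, a priori estimates from the appendix, and a bootstrap $H\to V\to Z$---is correct and matches the paper's architecture. The difference lies in \emph{how} you transfer the SPDE estimate to $\mu$.

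You propose to obtain an It\^o energy inequality of the form $\E\mathcal E(u(t;x)) + c\int_0^t \E(\text{dissipation})\,\d s \le \mathcal E(x) + Ct$, integrate in $x$ against $\mu$, and cancel $\int\E\mathcal E(u(t;\cdot))\,\d\mu$ with $\int\mathcal E\,\d\mu$ by invariance. As you note yourself, this cancellation is only legitimate once $\mathcal E\in L^1(\mu)$, which forces the bootstrap. (Your phrase ``lower-order energy $\tfrac\nu2\|\nabla x\|_H^2+\int_D F(x)$'' is confusing since that \emph{is} $\mathcal E$; presumably you mean to start from the $\tfrac12\|\cdot\|_H^2$ level, where boundedness on $\A$ makes the cancellation trivially valid.) A further wrinkle is that truncating $\mathcal E$ to $\mathcal E\wedge R$ destroys the It\^o identity, so the truncation must be done on the dissipation side only, which requires some care.

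The paper sidesteps the cancellation entirely. It applies invariance directly to the \emph{bounded} truncation $\Phi_n:=\|\cdot\|_V^2\wedge n^2$ (respectively $\|\cdot\|_Z^2\wedge n^2$, $\|F'(\cdot)\|_H^2\wedge n^2$), writes
\[
\int_\A \Phi_n\,\d\mu=\int_\A\int_0^1 \E[\Phi_n(u(s;x))]\,\d s\,\mu(\d x)
\le \int_\A\int_0^1 \E\|u(s;x)\|_V^2\,\d s\,\mu(\d x),
\]
and bounds the right-hand side by the appendix lemmas (Lemma~\ref{tight}, then Lemma~\ref{H^2_est} and Lemma~\ref{lem:F'}), which already package the It\^o computation into a time-averaged estimate depending only on the previous-level norm of $x$. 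The bootstrap is the same as yours (the $Z$-step needs $\mu(\A\cap V)=1$ from the $V$-step), but no ``$\int P_t\mathcal E=\int\mathcal E$'' cancellation is ever invoked. This is cleaner: truncation is applied to the quantity you want to bound, not to the Lyapunov functional, so the It\^o structure is never disturbed. Your route works too, but the paper's ordering of operations avoids the main obstacle you flagged.
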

\begin{proof}
Let $\mu \in \cP(\mathcal{A})$ be an invariant measure 
for the transition semigroup $P$.\\
{\sc Step 1.} First we note that the definition of $\A$ itself
trivially implies that $\mu$ has finite moments of any order on $H$. More precisely, 
it holds that 
\[
  \norm{x}_{L^\infty(D)} \le 1 \quad\forall\, x\in\A,
\]
which readily ensures by the embedding $L^\infty(D)\embed H$ that
\begin{equation}
\label{sti_H}
\int_{\mathcal{A}} \|x\|^2_H\, {\rm d}\mu (x) \le |D|.
\end{equation}
{\sc Step 2.} Now we show that 
\begin{equation}\label{sti_V}
\int_{\mathcal{A}} \|x\|^2_V\, \mu(\d x) \le C.
\end{equation}
To this end, we consider the mapping
$\Phi:\mathcal{A} \rightarrow [0,+\infty]$ defined as
\begin{equation*}
\Phi: x \mapsto \|x\|^2_{V}\pmb{1}_{\mathcal{A} \cap V}(x)+ \infty \ \pmb{1}_{\mathcal{A} \cap V^C}(x), \quad x\in\A,
\end{equation*}
and its approximations $\{\Phi_n\}_{n\in\enne}$, where for every $n\in\enne$
$\Phi_n:\A\to[0,n^2]$ is defined (setting $B_n^V$ as the closed ball of radius $n$ in $V$) as
\begin{equation*}
\Phi_n:x \mapsto 
\begin{cases}
\|x\|^2_{V} & \text{if} \ x \in B_n^V \cap \mathcal{A},
\\
n^2 & \text{otherwise},
\end{cases}
\quad x\in\A.
\end{equation*}
It is not difficult to check that actually $\Phi_n\in\cB_b(\A)$ for every $n\in\enne$.
Hence, exploiting the invariance of $\mu$, the boundedness of $\Phi_n$, the definition \eqref{P_t} of $P$,
and the Fubini-Tonelli Theorem we have that
\begin{align*}
&\int_{\mathcal{A}} \Phi_n(x)\, \mu(\d x)= 
\int_0^1 \int_{\mathcal{A}} \Phi_n(x) \, \mu(\d x)\, {\rm d}s=
\int_0^1 \int_{\mathcal{A}} P_s \Phi_n(x)\, \mu(\d x)\, {\rm d}s\\
&\qquad= \int_0^1 \int_{\mathcal{A}} \E\left[\Phi_n(u(s;x))\right]\, \mu(\d x)\, {\rm d}s
=\int_{\mathcal{A}}\int_0^1 \mathbb{E}\left[\Phi_n(u(s;x)) \right]\,{\rm d}s\, \mu(\d x).
\end{align*}
Since 
\begin{equation*}
\Phi_n(\cdot)= \|\cdot\|^2_{V}  \wedge n^2 \le \|\cdot\|^2_{V},
\end{equation*}
by Lemma~\ref{tight} and \eqref{sti_H} we infer that 
\begin{align*}
  \int_{\mathcal{A}} \Phi_n(x)\, \mu(\d x) &\le
  \int_{\mathcal{A}}\int_0^1 \mathbb{E}\left[\|u(s;x)\|_V^2 \right]\,{\rm d}s\, \mu(\d x)\\
  &\lesssim \int_{\mathcal{A}}\|x\|_H^2\,\mu(\d x) + 1 \leq C.
\end{align*}
Since $\Phi_n$ converges pointwise and monotonically from below to $\Phi$, 
the Monotone Convergence Theorem yields \eqref{sti_V}.\\
{\sc Step 3.} We prove now that 
\begin{equation}\label{sti_Z}
\int_{\mathcal{A}} \|x\|^2_Z\, \mu(\d x) \le C.
\end{equation}
To this end, we argue as in {\sc Step 2}, considering the map
$\Psi:\mathcal{A} \rightarrow [0,+\infty]$ defined as
\begin{equation*}
\Psi: x \mapsto \|x\|^2_{Z}\pmb{1}_{\mathcal{A} \cap Z}(x)+ \infty \ \pmb{1}_{\mathcal{A} \cap Z^C}(x), \quad x\in\A,
\end{equation*}
and its approximations $\{\Psi_n\}_{n\in\enne}$, where for every $n\in\enne$
$\Psi_n:\A\to[0,n^2]$ is defined (setting $B_n^Z$ as the closed ball of radius $n$ in $Z$) as
\begin{equation*}
\Psi_n:x \mapsto 
\begin{cases}
\|x\|^2_{Z} & \text{if} \ x \in B_n^Z \cap \mathcal{A},
\\
n^2 & \text{otherwise},
\end{cases}
\quad x\in\A.
\end{equation*}
Again, one has that $\Psi_n\in\cB_b(\A)$ for every $n\in\enne$.
Hence, arguing as above by using 
the invariance of $\mu$, the boundedness of $\Psi_n$, the definition of $P$,
and the Fubini-Tonelli Theorem, exploiting the fact that 
$\mu$ is concentrated on $\A\cap V$ by \eqref{sti_V} yields
\begin{align*}
\int_{\mathcal{A}} \Psi_n(x)\, \mu(\d x)=
\int_{\mathcal{A}\cap V} \Psi_n(x)\, \mu(\d x)
&=\int_{\mathcal{A}\cap V}\int_0^1 \mathbb{E}\left[\Psi_n(u(s;x)) \right]\,{\rm d}s\, \mu(\d x)\\
&\le \int_{\mathcal{A}\cap V}\int_0^1 \mathbb{E}\left[\|u(s;x)\|_Z^2 \right]\,{\rm d}s\, \mu(\d x).
\end{align*}
Lemma~\ref{H^2_est} together with the estimate \eqref{sti_V} entail then
\[
  \int_{\mathcal{A}} \Psi_n(x)\, \mu(\d x) \lesssim
  \int_{\mathcal{A}}\|x\|_V^2\,\mu(\d x) + 1 \leq C.
\]
The Monotone Convergence Theorem establish then \eqref{sti_Z}.\\
{\sc Step 4.} Eventually, we show here that 
\begin{equation}\label{sti_F'}
\int_{\mathcal{A}} \|F'(x)\|_H^2\, \mu(\d x) \le C
\end{equation}
by arguing as above.
Define $\Lambda:\mathcal{A} \rightarrow [0,+\infty]$ as
\begin{equation*}
\Lambda: x \mapsto 
\begin{cases}
  \|F'(x)\|^2_{H} \quad&\text{if } F'(x) \in H,\\
  +\infty \quad&\text{otherwise},
\end{cases}
\quad x\in \A,
\end{equation*}
and its approximations $\{\Lambda_n\}_{n\in\enne}$, where for every $n\in\enne$
$\Lambda_n:\A\to[0,n^2]$ is defined as
\begin{equation*}
\Lambda_n:x \mapsto 
\begin{cases}
\|F'(x)\|^2_{H} & \text{if} \ \|F'(x)\|_H\le n,\\
n^2 & \text{otherwise},
\end{cases}
\quad x\in\A.
\end{equation*}
As above, it holds that $\Lambda_n\in\cB_b(\A)$ for every $n\in\enne$.
Using the
the invariance of $\mu$, the boundedness of $\Lambda_n$, the definition of $P$,
the Fubini-Tonelli Theorem, and the fact that 
$\mu$ is concentrated on $\A\cap V$, we infer that
\begin{align*}
\int_{\mathcal{A}} \Lambda_n(x)\, \mu(\d x)=
\int_{\mathcal{A}\cap V} \Lambda_n(x)\, \mu(\d x)
&=\int_{\mathcal{A}\cap V}\int_0^1 \mathbb{E}\left[\Lambda_n(u(s;x)) \right]\,{\rm d}s\, \mu(\d x)\\
&\le \int_{\mathcal{A}\cap V}\int_0^1 \mathbb{E}\left[\|F'(u(s;x))\|_H^2 \right]\,{\rm d}s\, \mu(\d x).
\end{align*}
At this point, Lemma~\ref{lem:F'} implies directly that
\[
  \int_{\mathcal{A}} \Lambda_n(x)\, \mu(\d x)  \leq C,
\]
and \eqref{sti_F'} follows from 
the Monotone Convergence Theorem.
\end{proof}

\subsection{Existence of an ergodic invariant measure}
Let us recall first the definition of ergodicity for the transition semigroup $P$.
In this direction, note that for every invariant measure $\mu$,
by density and by definition of invariance
the semigroup $P$ can be extended (with the same symbol for brevity) to 
a strongly continuous linear semigroup of contractions on $L^p(\A,\mu)$ for every $p\in[1,+\infty)$.

\begin{definition}
  \label{def:erg}
  An invariant measure $\mu\in\cP(\A)$ for the semigroup $P$ is said to be ergodic if
  \[
  \lim_{t \rightarrow \infty}\frac 1t \int_0^tP_s \varphi\, {\rm d}s 
  = \int_{\mathcal{A}}\varphi(x) \, \mu(\d x) \qquad \text{in} \ L^2(\mathcal{A}, \mu)\quad \forall\, \varphi \in L^2(\mathcal{A}, \mu).
  \]
\end{definition}

The estimate \eqref{int_mu_V} implies that the set of ergodic invariant measures is not empty.
More precisely, we have the following result.
\begin{proposition}
\label{ergodic}
Assume {\bf H1--H2}. Then,
there exists an ergodic invariant measure for the transition semigroup $P$.
\end{proposition}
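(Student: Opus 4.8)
The standard route to an ergodic invariant measure is via the Krein–Milman theorem applied to the convex set $\mathcal{I}$ of all invariant measures for $P$: extremal points of $\mathcal{I}$ are precisely the ergodic measures, so it suffices to show $\mathcal{I}$ is non-empty, convex, and compact in a suitable topology. Non-emptiness is already granted by Theorem~\ref{th:mis-inv-2d}, and convexity is immediate from the linearity of the defining identity $\int_\A\varphi\,\d\mu=\int_\A P_t\varphi\,\d\mu$. The crux is compactness: I would equip $\cP(\A)$ with the topology of weak convergence of measures and prove that $\mathcal{I}$ is relatively compact and closed there.

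For relative compactness, the key input is the uniform bound \eqref{int_mu_V} from Proposition~\ref{supp1}: \emph{every} invariant measure $\mu$ satisfies $\int_\A\|x\|_Z^2\,\mu(\d x)\le C$ with one and the same constant $C$. Since the embedding $Z\embed H$ is compact, the sublevel sets $\{x\in\A:\|x\|_Z\le R\}$ are compact in $(\A,\dd)$, and Chebyshev's inequality gives $\mu(\{\|x\|_Z>R\})\le C/R^2$ uniformly over $\mu\in\mathcal{I}$. Hence $\mathcal{I}$ is tight, and by Prokhorov's theorem it is relatively compact in $\cP(\A)$. Closedness of $\mathcal{I}$ under weak convergence follows from the Feller property established in Theorem~\ref{th:mis-inv-2d}: if $\mu_n\to\mu$ weakly with each $\mu_n$ invariant, then for fixed $t\ge0$ and $\varphi\in C_b(\A)$ we have $P_t\varphi\in C_b(\A)$, so passing to the limit in $\int_\A\varphi\,\d\mu_n=\int_\A P_t\varphi\,\d\mu_n$ yields $\int_\A\varphi\,\d\mu=\int_\A P_t\varphi\,\d\mu$, i.e.\ $\mu\in\mathcal{I}$. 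Thus $\mathcal{I}$ is a non-empty, convex, compact subset of the locally convex space of finite signed measures on $\A$ with the weak topology.

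By the Krein–Milman theorem, $\mathcal{I}$ has at least one extreme point $\mu_*$. It then remains to recall the classical equivalence between extremality in $\mathcal{I}$ and ergodicity in the sense of Definition~\ref{def:erg}: an invariant measure is ergodic (equivalently, the only $P$-invariant sets in $\cB(\A)$ have $\mu$-measure $0$ or $1$, equivalently the Birkhoff-type Cesàro averages $\frac1t\int_0^t P_s\varphi\,\d s$ converge in $L^2(\A,\mu)$ to the spatial mean) if and only if it is an extreme point of the set of invariant measures. This is a general fact for Markov semigroups on Polish spaces — see e.g.\ \cite[Thm.~3.2.4 and Prop.~3.2.5]{DapZab} or the discussion in \cite{DapZab} — and applies verbatim here since $P$ extends to a strongly continuous contraction semigroup on $L^2(\A,\mu_*)$ as noted before Definition~\ref{def:erg}. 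Therefore $\mu_*$ is an ergodic invariant measure for $P$.

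\smallskip

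The only genuinely delicate point is making sure that \emph{uniformity} of the constant in \eqref{int_mu_V} is in hand — but this is exactly what Proposition~\ref{supp1} asserts (the constant depends only on the structural data $C_0,C_1,C_B,|D|,\nu,K,K_0$, not on $\mu$), so the tightness of the whole family $\mathcal{I}$ comes for free. Everything else is soft functional-analytic machinery (Prokhorov, Krein–Milman) combined with the Feller property already proved. I would therefore expect the proof to be quite short: cite Proposition~\ref{supp1} for uniform tightness, Theorem~\ref{th:mis-inv-2d} for the Feller property and non-emptiness, invoke Prokhorov and Krein–Milman to produce an extreme point, and close by quoting the extremality$\iff$ergodicity dictionary.
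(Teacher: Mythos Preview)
Your proposal is correct and follows essentially the same route as the paper: both use the uniform moment bound of Proposition~\ref{supp1} to establish tightness of the set of invariant measures, invoke Prokhorov and Krein--Milman to produce an extreme point, and appeal to the standard identification of extreme invariant measures with ergodic ones. The only cosmetic differences are that the paper uses the $V$-bound rather than the $Z$-bound for tightness (either works) and is slightly terser about closedness of the invariant set, which you handle explicitly via the Feller property.
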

\begin{proof}
It is well known (see e.g.\cite[Theorem 19.25]{Ali}) that for an arbitrary Markov transition semigroup $\{P_t\}_{t\ge 0}$, the ergodic measures are precisely the extreme points of the (possibly empty) convex set of its invariant measures. On the other hand, the Krein-Milman Theorem (see e.g.\cite[ Theorem 7.68]{Ali}) characterizes the convex compact sets, in locally convex Hausdorff spaces, as closed convex hull of its extreme points.
Let us denote by $\Pi\subset \cP(\mathcal{A})$ the convex set of all invariant measures for the Markov semigroup $\{P_t\}_{t\ge 0}$. In Theorem \ref{th:mis-inv-2d} we proved that $\Pi$ is non empty, thus, in view of the above discussion, it only remains to show that its closure is compact or equivalently that $\Pi$ is tight. 
By estimate \eqref{int_mu_V} in Proposition \ref{supp1} we know that there exists a constant $C$, depending on
the structural data, such that
\begin{equation*}
\int_{\mathcal{A}}\|x\|^2_V\, \mu( {\rm d}x) \le C \qquad \forall\, \mu \in \Pi.
\end{equation*}
Therefore, using the same notation of the proof of Theorem~\ref{th:mis-inv-2d}, by the Markov inequality we infer
that
\begin{align*}
\sup_{\mu \in \Pi} \mu \left(\bar B_n^C\right)
=\sup_{\mu \in \Pi} \mu (\{ x \in \mathcal{A} \ : \ \|x\|_V >n\}) \le \frac{1}{n^2}\sup_{\mu \in \Pi} \int_{\mathcal{A}}\|x\|^2_V\, \mu({\rm d}x)\le \frac{C}{n^2} \rightarrow 0,
\end{align*}
as $ n \rightarrow \infty$. Hence $\Pi$ is tight and admits extreme points,
which are ergodic invariant measures for $P$.
\end{proof}

\subsection{Uniqueness of the invariant measure}
Intuitively, uniqueness of invariant measures depends on how 
dissipative the stochastic equation \eqref{eq_AC} really is.
Here, we show that for a ``large enough'' 
diffusion coefficient $\nu$, the invariant measure is unique
and strongly mixing, according to the following definition.
\begin{definition}
  \label{def:st_mix}
  An invariant measure $\mu\in\cP(\A)$ for the semigroup $P$ is said to be strongly mixing if
  \[
  \lim_{t \rightarrow \infty}P_t \varphi
  = \int_{\mathcal{A}}\varphi(x) \, \mu(\d x) \qquad 
  \text{in} \ L^2(\mathcal{A}, \mu)\quad \forall\, \varphi \in L^2(\mathcal{A}, \mu).
  \]
\end{definition}

\begin{theorem}
\label{uniq_thm}
Assume {\bf H1--H2} and suppose that
\begin{equation}
\label{cond_uniq_inv}
\alpha_0:=\nu\left(\frac{1}{K_0^2}-1\right)-\frac{C_B}{2}-K >0.
\end{equation}
Then, there exists a unique invariant measure $\mu$ for the transition semigroup $P$.
 Moreover, $\mu$ is ergodic and strongly mixing.
\end{theorem}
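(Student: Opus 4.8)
The plan is to prove uniqueness and strong mixing by establishing a pathwise contraction estimate for two solutions of \eqref{eq_AC} started from different (deterministic) initial data in $\A$, and then deducing from this both uniqueness of the invariant measure and the strong mixing property via a standard coupling-type argument. The key point is that condition \eqref{cond_uniq_inv} forces the effective drift to be strictly dissipative, so that the $H$-distance between two solutions decays exponentially in mean square.

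\medskip

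\textbf{Step 1: pathwise dissipativity.} Fix $x,y\in\A$ and let $u^x,u^y$ be the corresponding variational solutions. Set $w:=u^x-u^y$. Applying the It\^o formula to $t\mapsto\tfrac12\|w(t)\|_H^2$ (rigorously, on the Gelfand triple $(V,H,V^*)$, using that $w$ has the regularity in \eqref{reg_u}), one obtains
\begin{align*}
\frac12\,\df\|w\|_H^2 + \nu\|\nabla w\|_H^2\,\df t
+ \bigl(F'(u^x)-F'(u^y),w\bigr)_H\,\df t
&= \bigl(w,(B(u^x)-B(u^y))\,\df W\bigr)_H \\
&\quad + \frac12\|B(u^x)-B(u^y)\|_{\LL_{HS}(U,H)}^2\,\df t.
\end{align*}
By {\bf H1(ii)} the monotonicity defect of $F'$ is controlled, $\bigl(F'(u^x)-F'(u^y),w\bigr)_H\geq -K\|w\|_H^2$; by the Poincar\'e-type inequality encoded in $K_0$ we have $\|\nabla w\|_H^2\geq (1/K_0^2)\|w\|_H^2 - \|w\|_H^2$ (more precisely $\|w\|_H^2\le K_0^2\|\nabla w\|_H^2$ in the Dirichlet case, with the obvious modification for Neumann where the zero-mean component must be treated separately); and by the second inequality in {\bf H2} the noise term contributes at most $\tfrac12 C_B|D|^{0}\|w\|_H^2$, i.e.\ $\tfrac12 C_B\|w\|_H^2$ after using the Lipschitz bound $\|B(x)-B(y)\|_{\LL_{HS}}^2\le C_B|D|\|x-y\|_H^2$ — wait, more carefully, the relevant constant is $C_B$ as it appears in \eqref{cond_uniq_inv}. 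Taking expectations kills the martingale term and yields
\[
\frac{\df}{\df t}\,\E\|w(t)\|_H^2 \le -2\alpha_0\,\E\|w(t)\|_H^2,
\]
with $\alpha_0>0$ as in \eqref{cond_uniq_inv}, hence $\E\|u(t;x)-u(t;y)\|_H^2\le e^{-2\alpha_0 t}\|x-y\|_H^2$.

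\medskip

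\textbf{Step 2: uniqueness.} Let $\mu_1,\mu_2$ be two invariant measures; by Proposition~\ref{supp1} both are supported in $\A_{str}\subset\A$ and have finite second moments on $H$ (indeed $\le|D|$). For any $\varphi\in C_b(\A)$ that is also Lipschitz with respect to $\dd$, invariance and Step~1 give, for all $t>0$,
\[
\left|\int_\A\varphi\,\df\mu_1-\int_\A\varphi\,\df\mu_2\right|
=\left|\int_\A\int_\A \bigl(P_t\varphi(x)-P_t\varphi(y)\bigr)\,\mu_1(\df x)\,\mu_2(\df y)\right|
\le \mathrm{Lip}(\varphi)\,e^{-\alpha_0 t}\!\int_\A\!\int_\A\|x-y\|_H\,\mu_1(\df x)\,\mu_2(\df y),
\]
using $\E|\varphi(u(t;x))-\varphi(u(t;y))|\le\mathrm{Lip}(\varphi)\,\E\|u(t;x)-u(t;y)\|_H\le\mathrm{Lip}(\varphi)e^{-\alpha_0 t}\|x-y\|_H$ and Jensen. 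The double integral on the right is finite since both measures have finite first $H$-moments. Letting $t\to\infty$ gives $\int\varphi\,\df\mu_1=\int\varphi\,\df\mu_2$ for all bounded Lipschitz $\varphi$ on $(\A,\dd)$; since such functions are measure-determining on the separable metric space $(\A,\dd)$, we conclude $\mu_1=\mu_2$.

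\medskip

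\textbf{Step 3: strong mixing and ergodicity.} Let $\mu$ be the unique invariant measure. For bounded Lipschitz $\varphi$ and $\mu$-a.e.\ $x$,
\[
\left|P_t\varphi(x)-\int_\A\varphi\,\df\mu\right|
=\left|\int_\A\bigl(P_t\varphi(x)-P_t\varphi(y)\bigr)\,\mu(\df y)\right|
\le \mathrm{Lip}(\varphi)\,e^{-\alpha_0 t}\int_\A\|x-y\|_H\,\mu(\df y),
\]
and the right-hand side tends to $0$ as $t\to\infty$; moreover it is dominated in $L^2(\A,\mu)$ uniformly in $t$ by $2\|\varphi\|_{C_b(\A)}$ (or by the integrable function $x\mapsto\mathrm{Lip}(\varphi)\int\|x-y\|_H\,\mu(\df y)\in L^2(\mu)$ since $\mu$ has finite second $H$-moment), so by dominated convergence $P_t\varphi\to\int\varphi\,\df\mu$ in $L^2(\A,\mu)$. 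Bounded Lipschitz functions being dense in $L^2(\A,\mu)$ and $\{P_t\}$ extending to a contraction semigroup on $L^2(\A,\mu)$, the convergence extends to all $\varphi\in L^2(\A,\mu)$, which is precisely strong mixing (Definition~\ref{def:st_mix}). Finally, strong mixing implies ergodicity: averaging $P_s\varphi$ over $s\in[0,t]$ and using the $L^2(\mu)$-convergence of the integrand together with dominated convergence for the time average yields the Ces\`aro limit in Definition~\ref{def:erg}.

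\medskip

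\textbf{Main obstacle.} The delicate point is Step~1: carrying out the It\^o formula rigorously for $\|u^x-u^y\|_H^2$ — since $u^x,u^y$ are only variational solutions, one must either invoke the It\^o formula on the Gelfand triple or argue by approximation (e.g.\ via Yosida regularisation of $F$ or Galerkin truncation) and pass to the limit — and, in the Neumann case, correctly handling the Poincar\'e inequality, where $\|w\|_H^2\le K_0^2\|\nabla w\|_H^2$ fails on constants and one must control the spatial mean of $w$ separately (it satisfies a closed one-dimensional equation whose drift $F'$ difference and noise are again handled by {\bf H1(ii)} and {\bf H2}), so that the combination of the diffusion gain $\nu/K_0^2$, the "$-\nu$" coming from moving to the full gradient norm, the monotonicity defect $-K$, and the noise term $-C_B/2$ assembles exactly into $-2\alpha_0$. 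Everything downstream (Steps 2–3) is then soft.
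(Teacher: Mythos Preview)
Your proposal is correct and follows essentially the same route as the paper: It\^o formula for $\tfrac12\|w\|_H^2$, the monotonicity defect bound from {\bf H1(ii)}, the Lipschitz bound on $B$ from {\bf H2}, the embedding $V\hookrightarrow H$ with constant $K_0$, and Gronwall, yielding exponential decay of $\E\|u^x(t)-u^y(t)\|_H^2$; then uniqueness and strong mixing by testing against a measure-determining class and using density in $L^2(\A,\mu)$.

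Two minor points of comparison. First, the paper handles the Poincar\'e step more directly than you suggest in your ``Main obstacle'': rather than separating the spatial mean in the Neumann case, it simply adds $\nu\int_0^t\E\|w(s)\|_H^2\,\d s$ to both sides to convert $\nu\|\nabla w\|_H^2$ into $\nu\|w\|_V^2$, and then uses the embedding inequality $\|w\|_H\le K_0\|w\|_V$ directly; this is how the combination $\nu(K_0^{-2}-1)$ arises, with no case distinction. Your concern about constants in the Neumann setting is really a question about whether $K_0<1$ holds in that case (a point the paper asserts in the Remark but does not elaborate), not about the structure of the proof. Second, the paper tests against restrictions of $C^1_b(H)$ functions (rather than bounded Lipschitz on $\A$) and deduces ergodicity from uniqueness together with Proposition~\ref{ergodic}, whereas you deduce it from strong mixing; both are valid and equally short.
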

\begin{remark}
Note that condition \eqref{cond_uniq_inv} is relevant since $K_0\in(0,1)$ by definition of $K_0$ itself.
Roughly speaking, the dissipativity inequality \eqref{cond_uniq_inv} is satisfied
either when the diffusion coefficient $\nu$ is large enough 
or when the structural coefficient $K_0$ is small enough.
For the latter case, we recall that $K_0$ depends exclusively on the domain $D$,
and can be estimated in terms of 
the first positive eigenvalue of the Laplacian operator on $D$ (according to the boundary conditions).
\end{remark}
\begin{proof}[Proof of Theorem~\ref{uniq_thm}]
Let $x,y\in \mathcal{A}$ and let $u^x,u^y$ be the respective variational solutions to problem \eqref{eq_AC}. 
Setting $w:=u^x-u^y$, the It\^o formula for the square of the $H$ norm of $w$
yields  for every $t \ge 0$, $\mathbb{P}$-almost surely, that
\begin{align}
\label{uniq1}
&\frac 12 \|w(t)\|^2_H + \nu \int_0^t \|\nabla w(s)\|^2_H\, {\rm d}s+ 
\int_0^t (F'(u^x(s))-F'(u^y(s)),w(s))_H\, {\rm d}s
\notag\\
&=\frac 12 \|x-y\|^2_H+\frac 12 \int_0^t \|B(u^x(s))-B(u^y(s))\|^2_{\mathcal L_{HS}(U,H)}\, {\rm d}s
\notag\\
& \qquad  + \int_0^t (w(s), \left(B(u^x(s))-B(u^y(s))\right){\rm d}W(s))_H.
\end{align} 
Using the Lipschitz continuity of the operator $B$ in {\bf H2} we estimate
\begin{equation*}
\frac 12 \int_0^t \|B(u(s))-B(v(s))\|^2_{\mathcal L_{HS}(U,H)}\, {\rm d}s
\le \frac {C_B}{2} \int_0^t \|w(s)\|^2_{H}\, {\rm d} s,
\end{equation*}
while exploiting assumption {\bf H1} we have that 
\[
  \int_0^t (F'(u^x(s))-F'(u^y(s)),w(s))_H\, {\rm d}s \ge -K\int_0^t \|w(s)\|^2_{H}\, {\rm d} s.
\]
Noting that the last term in \eqref{uniq1} is a square integrable martingale
thanks to \eqref{HS_norm} and the regularity of $w$, taking expectations we infer that 
\begin{align*}
&\frac 12 \E\|w(t)\|^2_H + \nu \E\int_0^t \|\nabla w(s)\|^2_H\, {\rm d}s
\le \frac 12 \|x-y\|^2_H+\left(\frac{C_B}{2} + K\right)
\E \int_0^t \|w(s)\|^2_H\, {\rm d}s.
\end{align*} 
It follows that
\begin{align*}
&\frac 12 \E\|w(t)\|^2_H + \nu \E\int_0^t \|w(s)\|^2_V\, {\rm d}s\\
&\qquad\le \frac 12 \|x-y\|^2_H+\left(\frac{C_B}{2} + K + \nu\right)
\E \int_0^t \|w(s)\|^2_H\, {\rm d}s,
\end{align*} 
hence also, thanks to the continuous inclusion $V\embed H$, that
\begin{align*}
&\frac 12 \E\|w(t)\|^2_H + \alpha_0\E\int_0^t \|w(s)\|^2_H\, {\rm d}s
\le \frac 12 \|x-y\|^2_H.
\end{align*} 
Now, exploiting the fact that 
%$u^x,v^y\in\A$ almost everywhere, hence in particular $|u^x|,|u^y|\le 1$ and $|w|\le 2$ almost everywhere, 
$x, y \in \mathcal{A}$, hence in particular $\|x-y\|_{L^\infty} \le 2$,
by the Gronwall lemma we obtain 
\begin{equation}\label{uniq3}
  \E\|(u^x-u^y)(t)\|^2_H \le e^{-\alpha_0 t} \|x-y\|^2_H
  \le 4 |D| e^{-\alpha_0 t} \quad\forall\,t\ge0, \quad\forall\,x,y\in H.
\end{equation}
Consequently, let $\mu$ be an invariant measure for 
$\{P_t\}_{t\ge 0}$. For any $\varphi \in C_b^1(H)$ and 
$x \in \mathcal{A}$, by definition of invariance 
and the estimate \eqref{uniq3} we have
\begin{align*}
\left| P_t(\varphi_{|\A})(x)-\int_{\mathcal{A}}\varphi(y)\, \mu({\rm d}y)\right|^2 
&\le \|D\varphi\|^2_{\infty}\int_{\mathcal{A}}\mathbb{E}\left[ \|u^x(t)-u^y(t)\|^2_H\right]\, \mu({\rm d}y)\\
&\le 2|D|\|D\varphi\|^2_{\infty}e^{-\alpha_0 t}
\end{align*}
uniformly in $x$. Since $C^1_b(H)_{|\A}$ is dense in $L^2(\mathcal{A}, \mu)$, we deduce that 
\begin{equation*}
\left| P_t\varphi(x)-\int_{\mathcal{A}}\varphi(y)\, \mu({\rm d}y)\right|\rightarrow 0
 \quad \text{as} \ t \rightarrow \infty, \quad \forall \varphi \in L^2(\mathcal{A}, \mu),
\end{equation*}
that is the strong mixing property holds true. 
Notice that the above computation easily implies also the uniqueness of the invariant measure. 
Indeed, let $\pi$ be another invariant measure, then for all $\varphi \in C_b^1(H)$ we have
\begin{align*}
&\left|  \int_{\mathcal{A}}\varphi(y)\, \mu({\rm d}y)- \int_\mathcal{A}\varphi(x)\, \pi({\rm d}x) \right| \\
&=\left|  \int_{\mathcal{A} }\int_{\mathcal{A}}\left(P_t(\varphi_{|\A})(y)
-P_t(\varphi_{|\A})(x)\right)\,  \pi({\rm d}x)\mu({\rm d}y) \right|
\\
&\le 2|D|\|D\varphi\|^2_{\infty} e^{-\alpha_0 t} \rightarrow 0 \qquad \text{as} \  t \rightarrow \infty.
\end{align*}
The fact that the unique invariant measure is also ergodic follows from 
Proposition~\ref{ergodic}, and this concludes the proof.
\end{proof}

\section{Analysis of the Kolmogorov equation}
\label{sec:kolm}
In this section we focus on the Kolmogorov operator associated to the stochastic 
equation \eqref{eq_AC}. We aim at characterising the infinitesimal generator of the 
transition semigroup $P$ in terms of the closure of the Kolmogorov operator associated to 
\eqref{eq_AC} in the space $L^2(\A,\mu)$, where $\mu$ is an invariant measure for $P$.
%Secondly, we prove more qualitative properties of the Kolmogorov operator, such as the Carr\'e des Champs identity, which will allow to rigorously introduce the higher order Sobolev spaces associated to $\mu$ and obtain refined Sobolev estimates on the solutions to the Kolmogorov equation.

Throughout the section, we assume {\bf H1--H2} and 
$\mu$ is an invariant measure for the semigroup $P$.
We have already pointed out that $P$ extends by density to a
strongly continuous linear semigroup 
of contractions on $L^2(\A,\mu)$, which 
will be denoted by the same symbol 
$P$ for convenience.
As such, for the semigroup $P$ on $L^2(\A,\mu)$ it is well defined the 
infinitesimal generator $(L,D(L))$, namely 
\[
  D(L):=\left\{\varphi\in L^2(\A,\mu): \quad
  \lim_{t\to0^+}\frac{P_t\varphi - \varphi}{t} \text{ exists in } L^2(\A,\mu)\right\}
\]
and
\[
  -L\varphi:=\lim_{t\to0^+}\frac{P_t\varphi - \varphi}{t} \quad\text{in } L^2(H,\mu), \quad \varphi\in D(L).
\]
The main issue that we address is to characterise the infinitesimal generator $(L,D(L))$
in terms of the  Kolmogorov operator associated to \eqref{eq_AC}.

\subsection{The Kolmogorov operator}
We define the Kolmogorov operator $(L_0,D(L_0))$ associated to the stochastic 
equation \eqref{eq_AC} as follows. 
We set 
\[
  D(L_0):=C^2_b(H)_{|\A}=\left\{\varphi\in C_b(\A):\exists\,\psi\in C^2_b(H): \varphi(x)=\psi(x) \quad\forall\,x\in\A\right\}.
\]
and 
\begin{align*}
  L_0\varphi(x)&:=-\frac12\operatorname{Tr}[B(x)^*D^2\varphi(x)B(x)] + (-\Delta x + F'(x), D\varphi(x))_H,\\
  &\qquad x\in \A_{str}, \quad \varphi\in D(L_0).
\end{align*}
Note that the definition of the domain of $L_0$ through restrictions on $\A$ is essential,
as the operators $B$ and $F'$ are not defined on the whole $H$. More specifically, let us stress that 
not even considering $x\in\A$ is enough: this is because $F'(x)$ makes sense in $H$ only for $x\in\A_{str}$,
and not for any $x\in\A$. 

We note that for every $\varphi\in D(L_0)$, with this definition 
the element $L_0\varphi$ is actually well defined as an element in $L^2(\A,\mu)$.
Indeed, thanks to the estimate \eqref{HS_norm} one has,
for every $y\in\A_{str}$, that
\begin{align*}
  |L_0\varphi(y)|&\le \norm{\varphi}_{C^2_b(H)}
  \left(\norm{B(y)}_{\mathcal L_{HS}(U,H)}^2 + \norm{F'(y)}_{H} + \norm{\Delta y}_H\right)\\
  &\le\norm{\varphi}_{C^2_b(H)}\left(C_B + \norm{F'(y)}_{H} + \norm{\Delta y}_H\right),
\end{align*}
so that the estimate \eqref{int_mu_V} yields that 
\[
  L_0\varphi \in L^2(\A,\mu) \quad\forall\,\varphi \in D(L_0).
\]
The fact that $L_0\varphi$ is explicitly defined only on $\A_{str}$, and not on $\A$, 
is irrelevant when working in $L^2(\A,\mu)$ since $\mu(\A_{str})=1$.
It follows then that $(D(L_0), L_0)$
is a linear unbounded operator on the Hilbert space $L^2(\A,\mu)$.

The elliptic Kolmogorov equation associated to \eqref{eq_AC} reads
\beq
  \label{eq:kolm}
  \alpha\varphi(x) + L_0\varphi(x) = g(x), \quad x\in\A_{str},
\eeq
where $\alpha>0$ is a given coefficient and $g:\A\to\erre$ is a given datum. 

The first natural result is the following.
\begin{lemma}
  \label{lem:L}
  In this setting, it holds that $D(L_0)\subset D(L)$ and
  \[
  L\varphi(x)=L_0\varphi(x) \quad\text{for $\mu$-a.e.~$x\in\A$,} \quad\forall\,\varphi\in D(L_0).
  \]
\end{lemma}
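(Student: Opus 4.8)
The statement to prove is that the Kolmogorov operator $L_0$ defined on $D(L_0) = C^2_b(H)_{|\A}$ agrees with the infinitesimal generator $L$ of the transition semigroup $P$ on $L^2(\A,\mu)$, in the sense that $D(L_0) \subset D(L)$ and $L\varphi = L_0\varphi$ $\mu$-a.e.\ for every $\varphi \in D(L_0)$. The natural strategy is to fix $\varphi \in D(L_0)$, pick a representative $\psi \in C^2_b(H)$ with $\psi_{|\A} = \varphi$, and apply the It\^o formula to $\psi(u^x(t))$, where $u^x$ is the analytically strong solution to \eqref{eq_AC} started at $x \in \A_{str}$. Since $\mu$ is concentrated on $\A_{str}$ by Proposition~\ref{supp1}, and since for $x \in \A_{str}$ one has $u_0 = x \in V \cap \A$, Theorem~\ref{ex_uniq_sol} provides an analytically strong solution with the regularity \eqref{reg_u_str}--\eqref{reg_F'_str}, which is exactly what is needed to make the It\^o expansion rigorous (it produces the term $(-\Delta u^x(s) + F'(u^x(s)), D\psi(u^x(s)))_H$ together with the trace term $\frac12\operatorname{Tr}[B(u^x(s))^*D^2\psi(u^x(s))B(u^x(s))]$ and a local martingale).

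First I would write, for $x \in \A_{str}$ and $t \geq 0$,
\[
  \psi(u^x(t)) = \psi(x) - \int_0^t L_0\psi(u^x(s))\,\d s + \int_0^t \bigl(D\psi(u^x(s)), B(u^x(s))\,\d W(s)\bigr)_H,
\]
noting that $u^x(s) \in \A_{str}$ for a.e.\ $s$ by \eqref{reg_u_str}, so $L_0\psi(u^x(s))$ makes sense. The stochastic integral is a genuine martingale because $D\psi$ is bounded and $\|B(u^x(s))\|_{\mathcal L_{HS}(U,H)}^2 \leq C_B|D|$ by \eqref{HS_norm}; taking expectations kills it and gives
\[
  (P_t\varphi)(x) - \varphi(x) = \E\psi(u^x(t)) - \psi(x) = -\E\int_0^t L_0\psi(u^x(s))\,\d s = -\int_0^t (P_s(L_0\varphi))(x)\,\d s,
\]
the last step by Fubini and the definition \eqref{P_t} of $P$, using that $L_0\varphi \in L^2(\A,\mu) \subset L^1(\A,\mu)$ so the integrand is jointly measurable and integrable. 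Dividing by $t$ and letting $t \to 0^+$, the strong continuity of $P$ on $L^2(\A,\mu)$ (recalled at the start of Section~\ref{sec:kolm}) gives that $\frac{1}{t}\int_0^t P_s(L_0\varphi)\,\d s \to L_0\varphi$ in $L^2(\A,\mu)$; hence $\frac{1}{t}(P_t\varphi - \varphi) \to -L_0\varphi$ in $L^2(\A,\mu)$, which is precisely the assertion that $\varphi \in D(L)$ with $L\varphi = L_0\varphi$.

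The main technical point — and the one I would be most careful about — is the justification of the It\^o formula in this infinite-dimensional, singular setting: $\psi \in C^2_b(H)$ but the drift $-\Delta u + F'(u)$ only lies in $H$ for $u \in \A_{str}$, so one must argue that the a.s.\ pairing $(-\Delta u^x(s) + F'(u^x(s)), D\psi(u^x(s)))_H$ is well defined and integrable in $(s,\omega)$; this follows from the regularity \eqref{reg_u_str}--\eqref{reg_F'_str} of the strong solution, which guarantees $-\Delta u^x, F'(u^x) \in L^2(\Omega; L^2(0,T;H))$, combined with the boundedness of $D\psi$. The remaining subtlety is the Fubini step exchanging $\E$ with $\int_0^t$ and then $\int_\A$ with $\int_0^t$: the first is fine by the integrability just noted, and the second uses $\mu(\A_{str}) = 1$ together with the joint measurability of $(x,s) \mapsto (P_s(L_0\varphi))(x)$, which one gets from the joint measurability of the transition function recalled after \eqref{P_t}. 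Everything else is routine.
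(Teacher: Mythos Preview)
Your proof is correct and follows essentially the same approach as the paper's own proof: apply the It\^o formula to $\psi(u^x(t))$ for $x$ in a set of full $\mu$-measure where the analytically strong solution exists (you take $x\in\A_{str}$, the paper takes $x\in\A\cap V$), take expectations to obtain the identity $P_t\varphi(x)-\varphi(x)=-\int_0^t P_s(L_0\varphi)(x)\,\d s$, and conclude by dividing by $t$ and using the strong continuity of $P$ on $L^2(\A,\mu)$. Your write-up is in fact slightly more careful than the paper's in spelling out the martingale property of the stochastic integral, the integrability needed for Fubini, and why $u^x(s)\in\A_{str}$ for a.e.~$s$.
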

\begin{remark}
  As we have already point out above, 
  notice that in this identity the expression $L_0\varphi(x)$ makes 
  sense for $\mu$-almost every~$x\in\A$ (and not just in $\A_{str}$) by virtue of 
  Proposition~\ref{supp1}, which ensures indeed that $\mu(\A_{str})=1$.
\end{remark}
\begin{proof}[Proof of Lemma~\ref{lem:L}]
Let $x\in\A \cap V$ and let $u:=u^x$ be the  respective unique analytically strong
solution to \eqref{eq_AC}. Then, 
for every $\varphi\in D(L_0)$ the It\^o formula yields directly, for every $t\ge0$,
\begin{align*}
  &\E\varphi(u(t)) + \E\int_0^t(- \nu \Delta u(s) + F'(u(s)), D\varphi(u(s)))_H\,\d s\\
  &\qquad=\E\varphi(x) +\frac12 \E\int_0^t \operatorname{Tr}[B(u(s))^*D^2\varphi(u(s))B(u(s))]\,\d s.
\end{align*}
Since $u(t)\in\A$ $\P$-almost surely for every $t\ge0$, this yields 
\beq\label{ito_L}
  P_t\varphi(x) - \varphi(x) + \int_0^t P_s(L_0\varphi)(x)\,\d s = 0 \quad\forall\,x\in\A.
\eeq
Since $L_0\varphi \in L^2(\A,\mu)$ and $P$  is strongly continuous on $L^2(\A,\mu)$, this implies 
\[
  s\mapsto P_s(L_0\varphi) \in C([0,t]; L^2(\A,\mu)),
\]
from which it follows that 
\[
  \lim_{t\to0^+}\frac1t\int_0^t P_s(L_0\varphi)\,\d s = 
  L_0\varphi \quad\text{in } L^2(\A,\mu).
\]
This shows by comparison in \eqref{ito_L} that $\varphi\in D(L)$.
Moreover, dividing  by $t$ and taking the limit in $L^2(\A,\mu)$ as $t\to0^+$  in
the identity \eqref{ito_L}, we get that
\[
  -L\varphi(x) + L_0\varphi(x) = 0 \quad\text{for $\mu$-a.e.~$x\in\A$,}
\]
and we conclude.
\end{proof}

\subsection{A regularised Kolmogorov equation}\label{ssec:kolm_reg}
A first main issue that we aim at addressing is to investigate 
existence and uniqueness of strong solutions 
to the Kolmogorov equation \eqref{eq:kolm} in $L^2(\A,\mu)$.
However, some preliminary preparations are necessary.
In particular, we construct here a family of regularised Kolmogorov equations
that approximate \eqref{eq:kolm} and for which we are actually able 
to show existence of {\em classical} solutions on the whole space $H$.
This will be done by using a double approximation of the operators:
one in the parameter $\lambda>0$, which basically removes the singularity  of $F'$
and allows to work on $H$ rather than just $\A$, and one in the parameter $n\in\enne$,
which conveys enough smoothness to the operators themselves.

Due to the presence of the multiplicative noise, in order to tackle the 
Kolmogorov equation, in the current Subsection~\ref{ssec:kolm_reg}
we shall need the following reinforcement of assumption {\bf H2},
namely:
\begin{description}
  \item[H2'] The sequence $\{h_k\}_{k \in \mathbb{N}}$ is included also in $C^2([-1,1])$
  and satisfies for every $k \in \mathbb{N}$ that $h_k'(\pm1)=0$. Moreover, it holds that 
 \begin{equation}
  \label{C_B'}
  C_B' := \sum_{k \in \mathbb{N}}\|h_k''\|^2_{C([-1,1])}  < \infty.		
\end{equation}
\end{description}

\subsubsection{First approximation}
Thanks to the assumption {\bf H1}, 
the function
\begin{equation}\label{beta}
  \beta:(-1,1)\to\erre, \qquad\beta(r):=F'(r) + Kr,\quad r\in(-1,1),
\end{equation}
is continuous non-decreasing, hence can be identified to a maximal monotone graph in $\erre\times\erre$.
In particular, for every $\lambda>0$ it is well defined its resolvent operator 
$J_\lambda:=(I+\lambda\beta)^{-1}:\erre\to(-1,1)$, i.e.~for every $r\in\erre$,
$J_\lambda(r)$ is the unique element in $(-1,1)$ such that 
$J_\lambda(r) + \lambda\beta(J_\lambda(r))=r$. 
The Yosida approximation of $\beta$ is 
defined as $\beta_\lambda:\erre\to\erre$, $\beta_\lambda(r):=\beta(J_\lambda(r))$, $r\in\erre$.
We recall that $\beta_\lambda$ is Lipschitz-continuous and non-decreasing:
for further properties on monotone and convex analysis we refer to \cite{Barbu_NLDEMT}.
\\
Let also $\rho\in C^\infty_c(\erre)$ with $\operatorname{sup}(\rho)=[-1,1]$, $\rho\ge0$, $\|\rho\|_{L^1(\erre)}=1$,
and set
\[
  \rho_\lambda:\erre\to\erre, \qquad
  \rho_\lambda(r):=\lambda^{-1}\rho(\lambda^{-1} r), \quad r\in\erre, \qquad\lambda>0,
\]
so that $(\rho_\lambda)_{\lambda>0}$ is a usual sequence of mollifiers on $\erre$. Let us set for convenience 
\[ 
  c_\rho:=\norm{\rho'}_{L^1(\erre)}.
\]

Let us construct the approximated operators. First of all, for $\lambda>0$ we define
the $\lambda$-regularised potential 
$F_\lambda:\erre\to[0,+\infty)$
as
\[ 
  F_\lambda(x):=F(0)-\frac{K}2|x|^2+\int_0^x(\rho_{\lambda^2}\star\beta_\lambda)(y)\,\d y, \quad x\in\erre.
\]
Note that one has
\beq\label{def_app1}
  F_\lambda'(x)=(\rho_{\lambda^2}\star\beta_\lambda)(x) - Kx, \quad x\in\erre,
\eeq
and from the properties of the Yosida approximation and convolutions 
it is not difficult to show that 
\beq
  \label{rate_lambda}
  |F_\lambda''(x)|\leq K+\frac1\lambda \quad\forall\,x\in\erre, \qquad
  |F_\lambda'''(x)|\leq \frac{c_\rho}{\lambda^3} \quad\forall\,x\in\erre.
\eeq
For clarity, let us use a separate notation for the superposition 
operator induced by the Lipschitz real function $F_\lambda'$ on 
the Hilbert space $H$, namely we set
\[
  \mathcal F_\lambda:H\to H, \qquad 
  (\mathcal F_\lambda(v))({\bf x}):=F_\lambda'(v({\bf x})) \quad\text{for a.e.~${\bf x}\in D$,}
  \quad v\in H.
\]
Since $F_\lambda'\in C^\infty(\erre)$ by definition and has bounded derivatives of any order, 
thanks to the continuous embedding $V\embed L^6(D)$ and the dominated convergence theorem, 
$\mathcal F_\lambda$ can be shown to be twice Fr\'echet differentiable in $H$ along directions of $V$:
more precisely, this means that for every $x\in H$ there exist two operators 
\[
  D\mathcal F_\lambda(x) \in \mathcal L(V,H), \qquad
  D^2\mathcal F_\lambda(x) \in \mathcal L(V; \mathcal L(V,H))\cong\mathcal L_2(V\times V; H)
\]
such that 
\begin{align*}
  \lim_{\norm{h}_V\to0}
  &\frac{\norm{\mathcal F_\lambda(x+h)-\mathcal F_\lambda(x)-D\mathcal F_\lambda(x)[h]}_H}{\norm{h}_V}=0,\\
  \lim_{\norm{h}_V\to0}
  &\frac{\norm{D\mathcal F_\lambda(x+h)-D\mathcal F_\lambda(x)-
  D^2\mathcal F_\lambda(x)[h,\cdot]}_{\mathcal L(V,H)}}{\norm{h}_V}=0.
\end{align*}
In particular, one has that 
\begin{alignat*}{2}
  D\mathcal F_\lambda(x)[h]&=F_\lambda''(x)h, \qquad &&x\in H,\quad h\in V,\\
  D^2\mathcal F_\lambda(x)[h_1,h_2]&=F_\lambda'''(x)h_1h_2, \qquad &&x\in H,\quad h_1,h_2\in V,
\end{alignat*}
so that \eqref{rate_lambda} yields, for a constant $c>0$ only depending on $\rho$, $K$, and $D$,
\begin{alignat}{2}
\label{rate_lambda2}
  \norm{D\mathcal F_\lambda(x)}_{\mathcal L(V,H)}&\leq c\left(1+\frac1\lambda\right) \quad&&\forall\,x\in H,\\
\label{rate_lambda3}
  \norm{D^2\mathcal F_\lambda(x)}_{\mathcal L(V; \mathcal L(V,H))}&\leq 
  \frac{c}{\lambda^3} \quad&&\forall\,x\in H.
\end{alignat}

As far as the operator $B$ is concerned, 
for every $k\in\enne$
we first extend $h_k$ to $\tilde h_k:\erre\to\erre$ by setting 
\[
  \tilde h_k(x):=
  \begin{cases}
  h_k(x) \quad&\text{if } x\in[-1,1],\\
  0 &\text{otherwise},
  \end{cases}
  \quad k\in\enne.
\]
In this way, by assumptions {\bf H2} and {\bf H2'}
it is clear that $\{\tilde h_k\}_{k\in\enne}\subset W^{2,\infty}(\erre)$, and we define then 
\[
  h_{k,\lambda}:=\rho_{\lambda^\gamma}\star \tilde h_k, \quad k\in\enne, \quad \lambda>0,
\]
where $\gamma>0$ is a prescribed fixed rate coefficient that will be chosen later.
The reason of introducing $\gamma$ here may sound not intuitive at this level, and will be clarified 
in the following sections: roughly speaking, $\gamma$ is needed in order to suitable compensate for 
the blow-up in \eqref{rate_lambda3}.
With these definitions, we set then
\beq\label{def_app2}
  B_\lambda:H\to \mathcal L_{HS}(U,H), \qquad 
  B_\lambda(x)e_k:=h_{k,\lambda}(x), \quad x\in H, \quad k\in\enne.
\eeq
Clearly, for every $\lambda$ it holds that $\{h_{k,\lambda}\}_{k\in\enne}\subset C^\infty_c(\erre)$
and, by the properties of convolutions and assumptions {\bf H2} and {\bf H2'}, 
for every $\lambda>0$ it holds that
\beq\label{rate_lambda4}
  \sum_{k\in\enne}\norm{h_{k,\lambda}}_{C^2(\erre)}^2
  \leq \sum_{k\in\enne}\|\tilde h_{k}\|_{W^{2,\infty}(\erre)}^2
  =\sum_{k\in\enne}\norm{h_{k}}_{C^2([-1,1])}^2\leq C_B+C_B'.
\eeq
It follows in particular that for every $\lambda>0$ the operator $B_\lambda$ constructed above
is $\sqrt{C_B}$-Lipschitz continuous and bounded.
Also, similarly as above one can check that $B_\lambda$
is twice Fr\'echet differentiable along the directions of $V$, in the sense that  for every
$x\in H$ there exist two operators 
\begin{align*}
  D B_\lambda(x) &\in \mathcal L(V,\mathcal L_{HS}(U,H)), \\
  D^2 B_\lambda(x) &\in \mathcal L(V; \mathcal L(V,\mathcal L_{HS}(U,H)))
  \cong\mathcal L_2(V\times V; \mathcal L_{HS}(U,H))
\end{align*}
such that 
\begin{align*}
  \lim_{\norm{h}_V\to0}
  &\frac{\norm{ B_\lambda(x+h)- B_\lambda(x)-
  D B_\lambda(x)[h]}_{\mathcal L_{HS}(U,H)}}{\norm{h}_V}=0,\\
  \lim_{\norm{h}_V\to0}
  &\frac{\norm{D B_\lambda(x+h)-D B_\lambda(x)-
  D^2 B_\lambda(x)[h,\cdot]}_{\mathcal L(V,\mathcal L_{HS}(U,H))}}{\norm{h}_V}=0.
\end{align*}
More precisely, it holds that 
\begin{alignat*}{2}
  D B_\lambda(x)[z]e_k&=h_{k,\lambda}'((x))z, \qquad &&x\in H,\quad z\in V,\quad k\in\enne,\\
  D^2 B_\lambda(x)[z_1,z_2]e_k&=h_{k,\lambda}''((x))z_1z_2, \qquad &&x\in H,\quad z_1,z_2\in V,\quad k\in\enne.
\end{alignat*}
From the continuous embedding $V\embed L^6(D)$,
condition \eqref{rate_lambda4}
ensures the existence of a constant $c$ independent of $\lambda$ such that
\begin{alignat}{2}
\label{rate_lambda5}
  \norm{D B_\lambda(x)}_{\mathcal L(V,\mathcal L_{HS}(U,H))}^2&\leq C_B \quad&&\forall\,x\in H,\\
\label{rate_lambda6}
  \norm{D^2 B_\lambda(x)}_{\mathcal L(V, \mathcal L(V,\mathcal L_{HS}(U,H)))}^2&\leq 
  c\quad&&\forall\,x\in H.
\end{alignat}
Furthermore, it is not difficult to see that actually $B_\lambda$ is also G\^ateaux differentiable 
from the whole $H$ to $\mathcal L_{HS}(U,H)$, and $DB_{\lambda}$ is G\^ateaux differentiable 
along the directions of $L^4(D)$, so that for every $x\in H$ 
$DB_\lambda(x)$ and $D^2B_\lambda(x)$ extend to well defined operators in
the spaces $\mathcal L(H, \mathcal L_{HS}(U,H))$ and $\mathcal L(L^4(D),\mathcal L(L^4(D), \mathcal L_{HS}(U,H)))$,
respectively. Again by \eqref{rate_lambda4} we also have then 
\begin{alignat}{2}
\label{rate_lambda5'}
  \norm{D B_\lambda(x)}_{\mathcal L(H,\mathcal L_{HS}(U,H))}^2&\leq C_B \quad&&\forall\,x\in H,\\
\label{rate_lambda6'}
  \norm{D^2 B_\lambda(x)}_{\mathcal L(L^4(D), \mathcal L(L^4(D),\mathcal L_{HS}(U,H)))}^2&\leq 
  c\quad&&\forall\,x\in H.
\end{alignat}

\subsubsection{Second approximation}
While the approximation in $\lambda$ is enough for proving well posedness of the stochastic equation \eqref{eq_AC},
in order to approximate the Kolmogorov equation \eqref{eq:kolm} we need more smoothness on the coefficients.
In this direction, we shall rely on some smoothing operators in infinite dimensions.
Let now $\lambda>0$ be fixed.

Let $\mathcal C$ be the unbounded linear operator $(I-\Delta)$ on $H$ with effective domain $Z$
(note that the definition of $Z$ includes either Dirichlet or Neumann boundary conditions, according 
to $\alpha_d$ and $\alpha_n$).
Then, $\mathcal C$ is linear maximal monotone, coercive on $V$, and 
$-\mathcal C$ generates a strongly continuous semigroup of 
contractions $(e^{-t\mathcal C})_{t\ge0}$
on $H$. 
Furthermore, since we are working in dimension $d=2,3$,
it is possible to show that $\mathcal C^{-1}\in\mathcal L_{HS}(H,H)$:
this follows from the fact that the eigenvalues $\{\lambda_k\}_k$ of $\mathcal C$
satisfy $\lambda_k\approx 1+k^{2/d}$ as $k\to\infty$.
We consider the Ornstein-Uhlenbeck transition semigroup
$R:=(R_t)_{t\ge0}$ given by
\[
R_t\varphi(x):=\int_H\varphi(e^{-t\mathcal C}x+y)\,N_{Q_t}(\d y), \quad \varphi\in\cB_b(H),
\]
where
\beq\label{Qt}
  Q_t:=\int_0^te^{-s\mathcal C}\mathcal C^{-2}
  e^{-s\mathcal C}\,\d s = \frac12\mathcal C^{-3}\left(I - e^{-2t\mathcal C}\right)
  , \quad t\ge0.
\eeq
Note that $Q_t$ is trace class on $H$ for every $t\ge0$ and 
\beq
  \label{est_Qt}
  \operatorname{Tr}(Q_t) \leq \norm{\mathcal C^{-1}}_{\mathcal L_{HS}(U,H)}^2 t \quad\forall\,t\ge0.
\eeq
Moreover, 
if $\{c_k\}_k$ is a complete orthonormal system of $H$ made of eigenfunctions of $\mathcal C$,
with eigenvalues $\{\lambda_k\}_k$, one has
\begin{align*}
  \norm{Q_t^{-1/2}e^{-t\mathcal C}c_k}_H
  =\sqrt2\lambda_k^{3/2}\frac{e^{-\lambda_kt}}{(1-e^{-2\lambda_kt})^{1/2}}
  \leq \frac{\sqrt2}{t^{3/2}}\max_{r>0}\frac{r^{3/2}e^{-r}}{(1-e^{-2r})^{1/2}},
\end{align*}
from which it follows,
%Let us note some properties of $\mathcal C$ that will be useful later.
%For every $y\in H$ the map $t\mapsto e^{-t\mathcal C}y$ is the unique solution 
%$\phi$ to the evolution equation 
%\[
%  \partial_t\phi + \mathcal C\phi=0, \qquad\phi(0)=y.
%\]
%Classical computations imply, for every $t\ge0$, that 
%\begin{align*}
%  \frac{t}{2}\norm{\mathcal C^{-1}\phi(t)}_H^2+\int_0^ts\norm{\mathcal C^{-\frac12}\phi(s)}_H^2\,\d s
%  &\leq \frac12\int_0^t\norm{\mathcal C^{-1}\phi(s)}_H^2\,\d s,\\
%  \frac{t^2}{2}\norm{\mathcal C^{-\frac12}\phi(t)}_H^2+\int_0^ts^2\norm{\phi(s)}_H^2\,\d s
%  &\leq \int_0^ts\norm{\mathcal C^{-\frac12}\phi(s)}_H^2\,\d s,\\
%  \frac{t^3}{2}\norm{\phi(t)}_H^2+\int_0^ts^3\norm{\phi(s)}_V^2\,\d s
%  &\leq \frac32\int_0^ts^2\norm{\phi(s)}_H^2\,\d s.
%\end{align*}
%Putting everything together we obtain in particular that 
%\[
%  \norm{\phi(t)}_H^2 \leq \frac3{2t^3}\int_0^t\norm{\mathcal C^{-1}\phi(s)}_H^2\,\d s \quad\forall\,t>0.
%\]
%Recalling the definition of $\phi$ and \eqref{Qt}, this implies
%for every $y\in H$ and $t>0$ that 
%\[
%  \norm{e^{-t\mathcal C}y}_H^2=
%  \norm{\phi(t)}_H^2 \leq \frac3{2t^3}\int_0^t\norm{\mathcal C^{-1}\phi(s)}_H^2\,\d s
%  = \frac3{2t^3}\norm{Q_t^{1/2}y}_H^2.
%\]
thanks to the characterisation of null-controllability in \cite[Prop.~B.2.1]{DapZab2}, that 
\beq
  \label{st_fell}
  e^{-t\mathcal C}(H)\subset Q_t^{1/2}(H) \qquad\text{and}\qquad
  \norm{Q_t^{-1/2}e^{-t\mathcal C}}_{\mathcal L(H,H)}\leq\frac{C}{t^{3/2}}, \quad \forall\,t>0.
\eeq
%Let $W_H$ be a cylindrical Wiener process on $H$.
%Then, the Ornstein-Uhlenbeck stochastic equation 
%\[
%  \d X + \mathcal CX\,\d t = \mathcal C^{-1}\d W_H, \qquad X(0)=x\in H,
%\]
%admits a unique mild solution 
%\[
%  X(t;x)=e^{-t\mathcal C}x + \int_0^te^{-(t-s)\mathcal C}\mathcal C^{-1}\,\d W_H(s), \quad t\ge0.
%\]
%Since one has that 
%\[
%  \E\norm{X(t;0)}_H^2 = \operatorname{Tr}(Q_t) \quad\forall\,t\ge0,
%\]
%and noting that It\^o's formula and the monotonicity of $\mathcal C$ imply that 
%\[
%  \E\norm{X(t;0)}_H^2\leq \E\int_0^t\norm{\mathcal C^{-1}}_{\mathcal L_{HS}(U,H)}^2\,\d s,
%\]
Thanks to \eqref{st_fell},
it is well known (see \cite[Thm.~6.2.2]{DapZab2}) that 
$R$ is strong Feller, in the sense that for every $\varphi\in \cB(H)$
and $t>0$ it holds that $R_t\varphi \in UC^\infty_b(H)$, as well as
$R_t\varphi(x)\to\varphi(x)$ for every $x\in H$ as $t\to0^+$.
It is natural then to introduce, for every $n\in\enne$, the regularisations
\[
  \mathcal F_{\lambda,n}:H\to H, \qquad
  B_{\lambda,n}:H\to \mathcal L_{HS}(U,H),
\]
as
\begin{align}
  \label{def_app3}
  \mathcal F_{\lambda,n}(x)&:=
  \int_H e^{-\frac{\mathcal C}n}\mathcal F_\lambda(e^{-\frac{\mathcal C}n}x+y)\,N_{Q_{1/n}}(\d y), \quad x\in H,\\
  \label{def_app4}
  B_{\lambda,n}(x)&:=\int_H e^{-\frac{\mathcal C}{n^\delta}}
  B_\lambda(e^{-\frac{\mathcal C}{n^\delta}}x+y)\,N_{Q_{1/n^\delta}}(\d y), \quad x\in H,
\end{align}
where $\delta>0$ is a positive rate coefficient that will be chosen later on.
Again, as for the case of $\gamma$ in the $\lambda$-approximation, 
the need of allowing for a general rate $\delta$ will be needed to 
suitably compensate the blow-up of $\mathcal F_{\lambda,n}$.

It follows for every $n\in\enne$ that the approximated operators satisfy 
$\mathcal F_{\lambda,n}\in C^\infty(H;H)$,
$B_{\lambda,n}\in C^\infty(H;\mathcal L_{HS}(U,H))$ and have bounded derivatives of any order. 
Moreover, since $e^{-\mathcal C/n}(H)\subset Z\subset V$ and 
$\mathcal F_\lambda$ and $B_\lambda$ are twice differentiable along directions of $V$,
for every $n\in\enne$ and for every $x,z,z_1,z_2\in H$ it holds that  
\begin{align*}
  D\mathcal F_{\lambda,n}(x)[z]&:=\int_H e^{-\frac{\mathcal C}n}
  D\mathcal F_\lambda(e^{-\frac{\mathcal C}{n}}x+y)[e^{-\frac{\mathcal C}n}z]\,N_{Q_{1/n}}(\d y),\\
  DB_{\lambda,n}(x)[z]&:=\int_H e^{-\frac{\mathcal C}{n^\delta}}
  DB_\lambda(e^{-\frac{\mathcal C}{n^\delta}}x+y)[e^{-\frac{\mathcal C}{n^\delta}}z]\,N_{Q_{1/n^\delta}}(\d y), 
\end{align*}
and
\begin{align*}
  D^2\mathcal F_{\lambda,n}(x)[z_1,z_2]&:=\int_H e^{-\frac{\mathcal C}{n}}
  D^2\mathcal F_\lambda(e^{-\frac{\mathcal C}{n}}x+y)
  [e^{-\frac{\mathcal C}{n}}z_1,e^{-\frac{\mathcal C}{n}}z_2]\,N_{Q_{1/n}}(\d y),\\
  D^2B_{\lambda,n}(x)[z_1,z_2]&:=\int_H e^{-\frac{\mathcal C}{n^\delta}}
  D^2B_\lambda(e^{-\frac{\mathcal C}{n^\delta}}x+y)
  [e^{-\frac{\mathcal C}{n^\delta}}z_1,e^{-\frac{\mathcal C}{n^\delta}}z_2]\,N_{Q_{1/n^\delta}}(\d y).
\end{align*}
In particular, from assumption {\bf H1}, the non-expansivity of $e^{-\mathcal C/n}$
and the estimates \eqref{rate_lambda5'}--\eqref{rate_lambda6'} we have that 
\begin{align}
\label{est1_kolm}
  (D\mathcal F_{\lambda,n}(x)[z],z)_H\ge -K\norm{z}_H^2 \qquad&\forall\,x,z\in H,\\
\label{est2_kolm}
  \norm{DB_{\lambda,n}(x)}^2_{\mathcal L(H,\mathcal L_{HS}(U,H))}\leq C_B \qquad&\forall\,x\in H,\\
\label{est3_kolm}
  \norm{D^2B_{\lambda,n}(x)}^2_{\mathcal L(L^4(D),\mathcal L(L^4(D),\mathcal L_{HS}(U,H)))}
  \leq c \qquad&\forall\,x\in H,
\end{align}
where we note that all constants $K$, $C_B$, and $c$ are independent of $\lambda$ and $n$.
Furthermore, exploiting the estimate \eqref{rate_lambda3} and the fact that $V=D(\mathcal C^{1/2})$,
for every $x,z_1,z_2\in H$ we infer that 
\begin{align*}
  &\norm{D^2\mathcal F_{\lambda,n}(x)[z_1,z_2]}_H\\
  &\qquad\leq \int_H\norm{D^2\mathcal F_\lambda(e^{-\frac{\mathcal C}n}x+y)}_{\mathcal L(V, \mathcal L(V,H))}
  \norm{e^{-\frac{\mathcal C}n}z_1}_V\norm{e^{-\frac{\mathcal C}n}z_1}_V\,N_{Q_{1/n}}(\d y)\\
  &\qquad\leq \frac{c}{\lambda^3}\norm{e^{-\frac{\mathcal C}n}z_1}_V\norm{e^{-\frac{\mathcal C}n}z_1}_V\\
  &\qquad\lesssim\frac{1}{\lambda^3} n^{1/2}\norm{z_1}_H n^{1/2}\norm{z_2}_H.
\end{align*}
It follows that there exists a positive constant $c>0$, independent of $n$ and $\lambda$, such that 
\beq
  \label{est4_kolm}
  \norm{D^2\mathcal F_{\lambda,n}(x)}_{\mathcal L(H,\mathcal L(H,H))}
  \leq c \frac{n}{\lambda^3}\qquad\forall\,x\in H.
\eeq
Analogously, thanks to the continuous embedding $H^{3/4}(D)\embed L^4(D)$ in dimensions $d=2,3$,
one has that $D(\mathcal C^{3/8})\embed L^4(D)$:
hence, proceeding as above and using \eqref{rate_lambda6'} instead,
one gets for every $x,z_1,z_2\in H$ that 
\begin{align*}
  &\norm{D^2 B_{\lambda,n}(x)[z_1,z_2]}_H\\
  &\leq \int_H\norm{D^2B_\lambda
  (e^{-\frac{\mathcal C}{n^\delta}}x+y)}_{\mathcal L(L^4(D), \mathcal L(L^4(D),\mathcal L_{HS}(U,H)))}\\
  &\qquad\times\norm{e^{-\frac{\mathcal C}{n^\delta}}z_1}_{L^4(D)}
  \norm{e^{-\frac{\mathcal C}{n^\delta}}z_1}_{L^4(D)}\,N_{Q_{1/n^\delta}}(\d y)\\
  &\leq c\norm{e^{-\frac{\mathcal C}{n^\delta}}z_1}_{L^4(D)}
  \norm{e^{-\frac{\mathcal C}{n^\delta}}z_1}_{L^4(D)}\\
  &\lesssim n^{\frac{3}{8}\delta}\norm{z_1}_H n^{\frac{3}{8}\delta}\norm{z_2}_H.
\end{align*}
It follows that there exists a positive constant $c>0$, independent of $n$ and $\lambda$, such that 
\beq
  \label{est5_kolm}
  \norm{D^2 B_{\lambda,n}(x)}_{\mathcal L(H,\mathcal L(H,\mathcal L_{HS}(U,H)))}
  \leq c n^{\frac34\delta}\qquad\forall\,x\in H.
\eeq

\subsubsection{Construction of classical solutions}
Let now $\lambda>0$ and $n\in\enne$ be fixed.
For every $x\in H$ the doubly approximated stochastic equation 
\beq\label{eq_app2}
\begin{cases}
{\rm d}u_{\lambda,n}(t)-\nu\Delta u_{\lambda,n}(t)\,{\rm d}t +
\mathcal F_{\lambda,n}(u_{\lambda,n}(t))\,{\rm d}t
\quad& \\
\quad= 
B_{\lambda,n}(u_{\lambda,n}(t))\,{\rm d}W(t) & \text{in} \ (0,T) \times D,
\\
\alpha_du_{\lambda,n} + \alpha_n\partial_{\bf n}u_{\lambda,n}=0 & \text{in} \ (0,T) \times \Gamma,
\\
u_{\lambda,n}(0)=x & \text{in} \ D,
\end{cases}
\eeq
admits a unique solution $u_{\lambda,n}=u_{\lambda,n}^{x}\in 
L^p(\Omega; C([0,T]; H)\cap L^2(0,T; V))$, for all $p\ge2$
and $T>0$.
Moreover, due to the smoothness of the coefficients $\mathcal F_{\lambda,n}$ and $B_{\lambda,n}$, 
by the regular dependence results in \cite{MPR}
(see also \cite{MS_fre}) one can infer in particular that the solution map satisfies, for all $T>0$,
\beq
  \label{reg_dep}
  S_{\lambda,n}:x\mapsto u_{\lambda,n}^x \in C^2_b(H; L^2(\Omega; C([0,T]; H))).
\eeq
Furthermore, for $x,z,z_1,z_2\in H$, the derivatives of $S_{\lambda,n}$ are given by 
\[
  DS_{\lambda,n}(x)[z]=v_{\lambda,n}^z, \qquad D^2S_{\lambda,n}(x)[z_1,z_2]=w_{\lambda,n}^{z_1,z_2},
\]
where
\[
  v_{\lambda,n}^z, w_{\lambda,n}^{z_1,z_2} \in L^2(\Omega; C([0,T]; H)\cap L^2(0,T; V)) \quad\forall\,T>0
\]
are the unique solutions the stochastic equations 
\beq\label{eq_app_v}
\begin{cases}
{\rm d}v^z_{\lambda,n}(t)-\nu\Delta v^z_{\lambda,n}(t)\,{\rm d}t +
D\mathcal F_{\lambda,n}(u_{\lambda,n}(t))v^z_{\lambda,n}(t)\,{\rm d}t \quad& \\
\quad= 
DB_{\lambda,n}(u_{\lambda,n}(t))v^z_{\lambda,n}(t)\,{\rm d}W(t) & \text{in} \ (0,T) \times D,
\\
\alpha_dv^z_{\lambda,n} + \alpha_n\partial_{\bf n}v^z_{\lambda,n}=0 & \text{in} \ (0,T) \times \Gamma,
\\
v^z_{\lambda,n}(0)=z & \text{in} \ D,
\end{cases}
\eeq
and 
\beq\label{eq_app_w}
\begin{cases}
{\rm d}w^{z_1,z_2}_{\lambda,n}(t)-\nu\Delta w^{z_1,z_2}_{\lambda,n}(t)\,{\rm d}t \quad & \\
\qquad+
D\mathcal F_{\lambda,n}(u_{\lambda,n}(t))w^{z_1,z_2}_{\lambda,n}(t)\,{\rm d}t \quad& \\
\qquad+ D^2 \mathcal F_{\lambda,n}(u_{\lambda,n})[v^{z_1}_{\lambda,n}(t), v^{z_2}_{\lambda,n}(t)]\,\d t  &\\
\quad= DB_{\lambda,n}(u_{\lambda,n}(t))w^{z_1,z_2}_{\lambda,n}(t)\,{\rm d}W(t) & \\
\qquad+D^2B_{\lambda,n}(u_{\lambda,n}(t))
[v^{z_1}_{\lambda,n}(t), v^{z_2}_{\lambda,n}(t)]\,{\rm d}W(t)  & \text{in} \ (0,T) \times D,
\\
\alpha_d w^{z_1,z_2}_{\lambda,n} + \alpha_n\partial_{\bf n} w^{z_1,z_2}_{\lambda,n}=0 & \text{in} \ (0,T) \times \Gamma,
\\
w^{z_1,z_2}_{\lambda,n}(0)=0 & \text{in} \ D.
\end{cases}
\eeq

We are ready now to consider the approximated Kolmogorov equation,
which is the actual Kolmogorov equation associated to the 
doubly regularised problem \eqref{eq_app2}. 
We define the regularised Kolmogorov operator 
$(D(L_{0}^{\lambda,n}), L_{0}^{\lambda,n})$
by setting 
\[
  D(L_{0}^{\lambda,n}) := C^2_b(H)
\]
and 
\begin{align*}
  L_{0}^{\lambda,n}\varphi(x)&:=
  -\frac12\operatorname{Tr}[B_{\lambda,n}(x)^*D^2\varphi(x)B_{\lambda,n}(x)] 
  + (-\Delta x + F'_{\lambda,n}(x), D\varphi(x))_H,\\
  &\qquad x\in Z, \quad \varphi\in D(L_{0}^{\lambda,n}).
\end{align*}
Arguing exactly as in Lemma~\ref{lem:L} but for the stochastic equation \eqref{eq_app2},
it is straightforward to see that on $C^2_b(H)$ the regularised operator 
$-L_{0}^{\lambda,n}$ coincides with the infinitesimal generator of the transition semigroup 
$P^{\lambda,n}=(P^{\lambda,n}_t)_{t\ge0}$ associated to \eqref{eq_app2}, i.e.
\[
 P^{\lambda,n}_t\varphi(x):=\E\varphi(u_{\lambda,n}(t;x)), \quad x\in H, \quad \varphi\in C^2_b(H).
\]

Thanks to the regular dependence on the initial datum, 
one is able to obtain well posedness in  the classical sense at $\lambda$ and $n$ fixed.
We collect these results in the following statement.
\begin{lemma}
  \label{lem:kolm_app}
  In the current setting, there exists a positive constant $\bar\alpha$,
  independent of $\lambda$ and $n$, such that
  for every $\alpha>\bar \alpha$ there exists $C=C(\alpha)>0$
  such that the following holds: for every $g\in C^1_b(H)$,
  the function 
  \beq
  \label{sol_app}
  \varphi_{\lambda,n}(x):=\int_0^{+\infty}e^{-\alpha t}\E[g(u_{\lambda,n}(t;x))]\,\d t, 
  \quad x\in H,\quad n\in\enne,\quad\lambda>0,
  \eeq
  satisfies $\varphi_{\lambda,n}\in C^1_b(H)$ and 
  \begin{align}
    \label{est_C1}
    \norm{\varphi_{\lambda,n}}_{C^1_b(H)}\leq C\norm{g}_{C^1_b(H)}
    \quad\forall\,\lambda>0,\quad\forall\,n\in\enne.
  \end{align}
  Moreover, if the dissipativity condition \eqref{cond_uniq_inv} holds and $g\in C^2_b(H)$, then
  for every $\lambda>0$ and $n\in\enne$ it also holds that $\varphi_{\lambda,n}\in C^2_b(H)$ with 
  \beq
    \label{est_C2}
    \norm{\varphi_{\lambda,n}}_{C^2_b(H)}\leq C
    \left(1+\frac{n}{\lambda^3} + n^{\frac34\delta}\right)\norm{g}_{C^2_b(H)}
    \quad\forall\,\lambda>0,\quad\forall\,n\in\enne,
  \eeq
  and
  \beq
  \label{eq:kolm_approx}
  \alpha\varphi_{\lambda,n}(x) + L_0^{\lambda,n}\varphi_{\lambda,n}(x) = g(x) \quad\,\forall\,x\in Z.
  \eeq
\end{lemma}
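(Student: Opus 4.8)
The plan is to produce $\varphi_{\lambda,n}$ by differentiating the resolvent representation \eqref{sol_app} under the time integral, relying on the regular dependence \eqref{reg_dep} of the solution map $S_{\lambda,n}$ on the initial datum. First I would observe that, by \eqref{reg_dep} and the chain rule, for $g\in C^1_b(H)$ (resp.\ $g\in C^2_b(H)$) the map $x\mapsto\E[g(u_{\lambda,n}(t;x))]$ is of class $C^1$ (resp.\ $C^2$) for every fixed $t$, with
\[
D_x\E[g(u_{\lambda,n}(t;x))][z]=\E\bigl[Dg(u_{\lambda,n}(t;x))[v^z_{\lambda,n}(t)]\bigr]
\]
and
\[
D^2_x\E[g(u_{\lambda,n}(t;x))][z_1,z_2]=\E\bigl[D^2g(u_{\lambda,n}(t;x))[v^{z_1}_{\lambda,n}(t),v^{z_2}_{\lambda,n}(t)]\bigr]+\E\bigl[Dg(u_{\lambda,n}(t;x))[w^{z_1,z_2}_{\lambda,n}(t)]\bigr],
\]
where $v^z_{\lambda,n}$ and $w^{z_1,z_2}_{\lambda,n}$ solve the linearised equations \eqref{eq_app_v}--\eqref{eq_app_w}. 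Everything then reduces to essentially $\lambda,n$-uniform moment estimates on these processes, to be integrated against $e^{-\alpha t}$; the rates will be uniform in $x$.

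For the $C^1$ bound I would apply the It\^o formula to $\norm{v^z_{\lambda,n}(t)}^2_H$ in \eqref{eq_app_v}: the one-sided bound \eqref{est1_kolm} controls $(D\mathcal F_{\lambda,n}(u_{\lambda,n})v^z_{\lambda,n},v^z_{\lambda,n})_H$ from below by $-K\norm{v^z_{\lambda,n}}^2_H$, the estimate \eqref{est2_kolm} bounds the It\^o correction $\norm{DB_{\lambda,n}(u_{\lambda,n})v^z_{\lambda,n}}^2_{\mathcal L_{HS}(U,H)}$ by $C_B\norm{v^z_{\lambda,n}}^2_H$, and the stochastic integral is a genuine martingale; discarding the dissipation term and using Gronwall I expect $\E\norm{v^z_{\lambda,n}(t)}^2_H\le e^{(2K+C_B)t}\norm{z}^2_H$, with the exponent independent of $\lambda$ and $n$. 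Setting $\bar\alpha:=K+\tfrac{C_B}{2}$ (to be possibly enlarged below, still $\lambda,n$-independent), for $\alpha>\bar\alpha$ one may differentiate under the integral in \eqref{sol_app} by dominated convergence, which yields $\varphi_{\lambda,n}\in C^1_b(H)$ and \eqref{est_C1} with $C(\alpha)$ depending only on $\alpha$, $K$ and $C_B$; continuity of $D\varphi_{\lambda,n}$ (and of $\varphi_{\lambda,n}$) follows from continuity of $x\mapsto DS_{\lambda,n}(x)$ together with dominated convergence.

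For the $C^2$ bound I would first invoke the dissipativity assumption \eqref{cond_uniq_inv}: keeping the dissipation term in the estimate above and using the Poincar\'e-type inequality $\norm{v}^2_H\le K_0^2\norm{v}^2_V$ exactly as in the proof of Theorem~\ref{uniq_thm} upgrades the bound to the decay $\E\norm{v^z_{\lambda,n}(t)}^2_H\le e^{-2\alpha_0 t}\norm{z}^2_H$; running the same structural It\^o computation on $\norm{v^z_{\lambda,n}(t)}^4_H$ (controlling the quadratic variation again via \eqref{est2_kolm}) gives $\E\norm{v^z_{\lambda,n}(t)}^4_H\le e^{c_4 t}\norm{z}^4_H$ with $c_4$ independent of $\lambda,n$. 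Then the first summand of $D^2_x\E[g(u_{\lambda,n})]$ is $O(e^{-2\alpha_0 t})$, while for the second summand I would run the It\^o formula on $\norm{w^{z_1,z_2}_{\lambda,n}(t)}^2_H$ in \eqref{eq_app_w}: the terms linear in $w^{z_1,z_2}_{\lambda,n}$ are handled as in the $C^1$ step (via \eqref{est1_kolm}, \eqref{est2_kolm} and Poincar\'e), the drift forcing $D^2\mathcal F_{\lambda,n}(u_{\lambda,n})[v^{z_1}_{\lambda,n},v^{z_2}_{\lambda,n}]$ is bounded in $H$ by $\tfrac{cn}{\lambda^3}\norm{v^{z_1}_{\lambda,n}}_H\norm{v^{z_2}_{\lambda,n}}_H$ through \eqref{est4_kolm} and absorbed by Young's inequality, and the noise forcing $D^2B_{\lambda,n}(u_{\lambda,n})[v^{z_1}_{\lambda,n},v^{z_2}_{\lambda,n}]$ is bounded in $\mathcal L_{HS}(U,H)$ by $cn^{3\delta/4}\norm{v^{z_1}_{\lambda,n}}_H\norm{v^{z_2}_{\lambda,n}}_H$ through \eqref{est5_kolm}. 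Combining with the fourth-moment bound on $v^z_{\lambda,n}$ and Gronwall, I expect
\[
\E\norm{w^{z_1,z_2}_{\lambda,n}(t)}^2_H\lesssim\Bigl(\frac{n^2}{\lambda^6}+n^{\frac32\delta}\Bigr)e^{c_4 t}\norm{z_1}^2_H\norm{z_2}^2_H,
\]
hence $\E\norm{w^{z_1,z_2}_{\lambda,n}(t)}_H\lesssim\bigl(\tfrac{n}{\lambda^3}+n^{\frac34\delta}\bigr)e^{c_4 t/2}\norm{z_1}_H\norm{z_2}_H$; after possibly enlarging $\bar\alpha$ so that $\bar\alpha\ge c_4/2$ (still $\lambda,n$-independent), integrating against $e^{-\alpha t}$ for $\alpha>\bar\alpha$ gives $\varphi_{\lambda,n}\in C^2_b(H)$ with \eqref{est_C2}, continuity of $D^2\varphi_{\lambda,n}$ following as before from continuity of $x\mapsto D^2S_{\lambda,n}(x)$. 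Finally, once $\varphi_{\lambda,n}\in C^2_b(H)$, the identity \eqref{eq:kolm_approx} follows exactly as in Lemma~\ref{lem:L}: for $x\in Z$ the regularised equation \eqref{eq_app2} has the parabolic regularity $u_{\lambda,n}(\cdot;x)\in L^2(0,T;Z)$ (since $\mathcal F_{\lambda,n}$ and $B_{\lambda,n}$ are smooth and bounded and $B_{\lambda,n}$ maps $V$ into $\mathcal L_{HS}(U,V)$), so It\^o's formula yields $P^{\lambda,n}_t\varphi_{\lambda,n}(x)-\varphi_{\lambda,n}(x)+\int_0^t P^{\lambda,n}_s(L^{\lambda,n}_0\varphi_{\lambda,n})(x)\,\d s=0$, and differentiating the resolvent identity $P^{\lambda,n}_t\varphi_{\lambda,n}=e^{\alpha t}\int_t^{+\infty}e^{-\alpha s}P^{\lambda,n}_sg\,\d s$ at $t=0^+$ gives $\alpha\varphi_{\lambda,n}(x)+L^{\lambda,n}_0\varphi_{\lambda,n}(x)=g(x)$ on $Z$.

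The main obstacle I anticipate is precisely the energy estimate for $w^{z_1,z_2}_{\lambda,n}$ in the $C^2$ step: one must simultaneously reproduce the exact blow-up rates $n/\lambda^3$ and $n^{3\delta/4}$ dictated by \eqref{est4_kolm}--\eqref{est5_kolm}, and split the It\^o cross term between $DB_{\lambda,n}(u_{\lambda,n})w^{z_1,z_2}_{\lambda,n}$ and $D^2B_{\lambda,n}(u_{\lambda,n})[v^{z_1}_{\lambda,n},v^{z_2}_{\lambda,n}]$ by Young's inequality with a parameter small enough that the dissipativity margin in \eqref{cond_uniq_inv} still leaves a strictly positive coefficient in front of $\norm{w^{z_1,z_2}_{\lambda,n}}^2_H$; this, together with the $\lambda,n$-uniform fourth-moment control of $v^z_{\lambda,n}$, is exactly what guarantees that $\bar\alpha$, and hence $C(\alpha)$, remain independent of both regularisation parameters.
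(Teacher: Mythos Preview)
Your proposal is correct and follows essentially the same route as the paper: It\^o on $\norm{v^z_{\lambda,n}}_H^2$ with \eqref{est1_kolm}--\eqref{est2_kolm} for the $C^1$ bound, a fourth-moment estimate on $v^z_{\lambda,n}$, It\^o on $\norm{w^{z_1,z_2}_{\lambda,n}}_H^2$ with \eqref{est4_kolm}--\eqref{est5_kolm} producing exactly the rates $n/\lambda^3$ and $n^{3\delta/4}$, and then an It\^o/resolvent argument for \eqref{eq:kolm_approx}. The only cosmetic differences are that the paper bounds the $D^2g[v^{z_1},v^{z_2}]$ term via the $L^4$ growth bound rather than your $L^2$ decay, does not invoke Poincar\'e in the $w$-estimate (it simply allows a $\lambda,n$-independent exponential rate and absorbs it into $\bar\alpha$, so your concern about preserving a strictly positive coefficient is unnecessary), and derives \eqref{eq:kolm_approx} by applying It\^o to $g$ and using the commutation of $L_0^{\lambda,n}$ with $P^{\lambda,n}$ rather than differentiating the resolvent identity.
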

\begin{proof}[Proof of Lemma~\ref{lem:kolm_app}]
It is obvious that $\varphi_{\lambda,n}\in C_b(H)$ and 
\[
\norm{\varphi_{\lambda,n}}_{C_b(H)}\le \frac1\alpha\norm{g}_{C_b(H)}.
\]
Let $x\in H$ and $u_{\lambda,n}:=u_{\lambda,n}^x$.
The It\^o formula for the square of the $H$-norm in \eqref{eq_app_v} yields, exploiting 
\eqref{est1_kolm}--\eqref{est2_kolm}, that
\begin{align}
  \nonumber
  &\frac12\norm{v^z_{\lambda,n}(t)}_H^2
  +\nu\int_0^t\norm{\nabla v^z_{\lambda,n}(s)}_H^2\,\d s\\
  \nonumber
  &\qquad\leq \frac12\norm{z}_H^2 + \left(K+\frac{C_B}2\right)
  \int_0^t\norm{v^z_{\lambda,n}(s)}_H^2\,\d s\\
  \label{ito_v}
  &\qquad+\int_0^t\left(v^z_{\lambda,n}(s), 
  DB_{\lambda,n}(u_{\lambda,n}(s))v^z_{\lambda,n}(s)\,\d W(s)\right)_H.
\end{align}
Taking expectations, we readily deduce by the Gronwall lemma that 
\beq
  \label{aux1_kolm}
  \norm{v_{\lambda,n}^z(t)}^2_{L^2(\Omega; H)} \leq e^{(2K+C_B)t}\norm{z}_H^2
  \quad\forall\,t\ge0.
\eeq
If $g\in C^1_b(H)$, recalling \eqref{reg_dep}, by the chain rule one has, for every $t\ge0$, that 
\[
  x\mapsto g(u_{\lambda,n}(t;x)) \in C^1_b(H; L^2(\Omega)),
\]
with 
\[
  D\left(x\mapsto g(u_{\lambda,n}(t;x))\right)[z] = 
  Dg(u_{\lambda,n}(t;x))v^z_{\lambda,n}(t;x), \quad x,z\in H.
\]
It follows, thanks to \eqref{aux1_kolm} that
\[
  \norm{D\left(x\mapsto g(u_{\lambda,n}(t;x))\right)}_{\mathcal L(H; L^2(\Omega))}
  \le \norm{g}_{C^1_b(H)}e^{(K+C_B/2)t} \quad\forall\,t\ge0.
\]
As soon as $\alpha>K+\frac{C_B}2$ 
(recall that $K$ and $C_B$ are independent of $\lambda$ and $n$),
the dominated convergence theorem implies that 
$\varphi_{\lambda,n}\in C^1(H)$ with 
\[
  D\varphi_{\lambda_n}(x)[z]=\int_0^{+\infty}e^{-\alpha t}
  \E\left[Dg(u_{\lambda,n}(t;x))v^z_{\lambda,n}(t;x)\right]\,\d t, \quad z\in H,
\]
and 
\[
  \norm{D\varphi_{\lambda,n}(x)}_{H} \leq 
  \norm{g}_{C^1_b(H)}\int_0^{+\infty}e^{-(\alpha-K-C_B/2)t}\,\d t
  \quad\forall\,x\in H.
\]
Choosing then $\alpha>K+\frac{C_B}2$,
this proves that actually $\varphi_{\lambda,n}\in C^1_b(H)$, 
as well as the estimate \eqref{est_C1}.\\
Let us now further assume \eqref{cond_uniq_inv} and that $g\in C^2_b(H)$.
From the It\^o formula \eqref{ito_v},
exploiting the continuous embedding $V\embed H$
and the assumption \eqref{cond_uniq_inv},
one gets that 
\begin{align*}
  &\frac12\norm{v^z_{\lambda,n}(t)}_H^2
  +\alpha_0\int_0^t\norm{v^z_{\lambda,n}(s)}_H^2\,\d s \\
  &\qquad\leq \frac12\norm{z}_H^2 + \int_0^t\left(v^z_{\lambda,n}(s), 
  DB_{\lambda,n}(u_{\lambda,n}(s))v^z_{\lambda,n}(s)\,\d W(s)\right)_H.
\end{align*}
%\begin{align*}
%  &\frac12\norm{v^z_{\lambda,n}(t)}_H^2
%  +\nu\int_0^t\norm{v^z_{\lambda,n}(s)}_V^2\,\d s 
%  - \left(K + \frac{C_B}{2} + \nu\right)\int_0^t\norm{v^z_{\lambda,n}(s)}_H^2\,\d s\\
%  &\qquad\leq \frac12\norm{z}_H^2 + \int_0^t\left(v^z_{\lambda,n}(s), 
%  DB_{\lambda,n}(u_{\lambda,n}(s))v^z_{\lambda,n}(s)\,\d W(s)\right)_H.
%\end{align*}
%Thanks to \eqref{cond_uniq_inv}, it holds that 
%$\nu>K_0^2(K + \frac{C_B}{2} + \nu)$: hence, there exists $\eps_0\in(0,1)$,
%independent of $\lambda$ and $n$, such that 
%$\nu-\eps_0>K_0^2(K + \frac{C_B}{2} + \nu)$.
%Exploiting the continuous embedding $V\embed H$ we then get
%\begin{align*}
%  &\frac12\norm{v^z_{\lambda,n}(t)}_H^2
%  +\eps_0\int_0^t\norm{v^z_{\lambda,n}(s)}_V^2\,\d s 
%  +\frac{\nu-\eps_0}{K_0^2}\left(K + \frac{C_B}{2} + \nu\right)\int_0^t\norm{v^z_{\lambda,n}(s)}_H^2\,\d s\\
%  &\qquad\leq \frac12\norm{z}_H^2 + \int_0^t\left(v^z_{\lambda,n}(s), 
%  DB_{\lambda,n}(u_{\lambda,n}(s))v^z_{\lambda,n}(s)\,\d W(s)\right)_H,
%\end{align*}
%where the third term on the left hand side in non-negative by choice of $\eps_0$.
Noting that the Burkholder-Davis-Gundy inequality
together with \eqref{est2_kolm} yield
\begin{align*}
  &\E\sup_{r\in[0,t]}\left|\int_0^r\left(v^z_{\lambda,n}(s), 
  DB_{\lambda,n}(u_{\lambda,n}(s))v^z_{\lambda,n}(s)\,\d W(s)\right)_H\right|^2\\
  &\qquad\leq 4C_B\E\int_0^t\norm{v^z_{\lambda,n}(s)}_H^4\,\d s,
\end{align*}
by raising to the square power in It\^o's formula and taking expectations we obtain 
\[
  \E\sup_{r\in[0,t]}\norm{v^z_{\lambda,n}(t)}_H^4 %+ 4\eps_0^2\E\norm{v^z_{\lambda,n}}_{L^2(0,t; V)}^4
  \leq 2\norm{z}_H^4 + 32C_B\int_0^t\E\norm{v^z_{\lambda,n}(s)}_H^4\,\d s.
\]
The Gronwall lemma ensures then that 
\beq
  \label{aux2_kolm}
  \norm{v_{\lambda,n}^z}_{L^4(\Omega; C([0,t]; H)}^4
  \leq 2e^{32C_B t}\norm{z}_H^4
  \quad\forall\,t\ge0.
\eeq
%By interpolation, one has that 
%\[
%  \norm{y}_{L^4(D)}\leq \norm{y}_{L^6(D)}^{3/4}\norm{y}_H^{1/4} \quad\forall\,y\in L^6(D),
%\]
%which readily implies by the embedding $V\embed L^6(D)$ that 
%\[
%C([0,t]; H)\cap L^2(0,t;V)\embed L^{\frac83}(0,t; L^4(D)) \quad\forall\,t\ge0
%\]
%and
%\[
%  \norm{y}_{L^{\frac83}(0,t; L^4(D))}\leq 
%  \norm{y}_{L^2(0,t;L^6(D))}^{3/4}\norm{y}_{C([0,t]; H)}^{1/4} \quad\forall\,t\ge0
%\]
%for all $y\in C([0,t]; H)\cap L^2(0,t;V)$. Hence, \eqref{est3_kolm} also yields,
%possibly renominating $C$ independently of $\lambda$ and $n$, that 
%\beq
%  \label{est3'_kolm}
%  \norm{v_{\lambda,n}^z}_{L^4(\Omega; L^{\frac83}(0,t; L^4(D)))}^4
%  \leq Ce^{32C_B t}\norm{z}_H^4
%  \quad\forall\,t\ge0.
%\eeq
Similarly, using the It\^o formula for the square of the $H$-norm in \eqref{eq_app_w}, 
and exploiting now \eqref{est1_kolm}--\eqref{est5_kolm} and the Young inequality, we get 
for a constant $C$ independent of $\lambda,n$ that
\begin{align*}
  &\frac12\E\norm{w^{z_1,z_2}_{\lambda,n}(t)}_H^2
  +\nu\E\int_0^t\norm{\nabla w^{z_1,z_2}_{\lambda,n}(s)}_H^2\,\d s\\
  &\leq \left(K +\frac{C_B}2 +\frac12\right)
  \int_0^t\E\norm{w^{z_1,z_2}_{\lambda,n}(s)}_H^2\,\d s\\
  &\qquad+C\left(\frac{n^2}{\lambda^6} + n^{\frac32\delta}\right)
  \E\int_0^t\norm{v^{z_1}_{\lambda,n}(s)}_H^2
  \norm{v^{z_2}_{\lambda,n}(s)}_H^2\,\d s.
\end{align*}
Exploiting the H\"older inequality  together with \eqref{aux2_kolm}, it follows that
\begin{align*}
  &\frac12\E\norm{w^{z_1,z_2}_{\lambda,n}(t)}_H^2
  +\nu\E\int_0^t\norm{\nabla w^{z_1,z_2}_{\lambda,n}(s)}_H^2\,\d s\\
  &\leq\left(K +\frac{C_B}2 +\frac12\right)
  \int_0^t\E\norm{w^{z_1,z_2}_{\lambda,n}(s)}_H^2\,\d s\\
  &\qquad+2C\left(\frac{n^2}{\lambda^6} + 
   n^{\frac32\delta}\right) \norm{z_1}^2_H\norm{z_2}^2_H\int_0^te^{32C_Bs}\,\d s,
\end{align*}
and the Gronwall lemma ensures that, for every $t\geq0$,
\beq
  \label{aux3_kolm}
  \norm{w_{\lambda,n}(t)}^2_{L^2(\Omega; H)} \leq 
  4C\left(\frac{n^2}{\lambda^6} + n^{\frac32\delta}\right)
   t e^{(2K+33C_B+1)t}\norm{z_1}^2_H\norm{z_2}^2_H.
\eeq
Since
$g\in C^2_b(H)$, condition \eqref{reg_dep} and the chain rule give, for every $t\ge0$, that 
\[
  x\mapsto g(u_{\lambda,n}(t;x)) \in C^2_b(H; L^2(\Omega)),
\]
with 
\begin{align*}
  &D^2\left(x\mapsto g(u_{\lambda,n}(t;x))\right)[z_1,z_2] \\
  &=Dg(u_{\lambda,n}(t;x))w^{z_1,z_2}_{\lambda,n}(t;x)\\
  &\qquad+D^2g(u_{\lambda,n}(t;x))[v^{z_1}_{\lambda,n}(t;x),v^{z_2}_{\lambda,n}(t;x)],
  \quad x,z_1,z_2\in H.
\end{align*}
The estimates \eqref{aux2_kolm} and \eqref{aux3_kolm} imply then,
possibly renominating the constant $C$ independently of $\lambda$ and $n$, that
\begin{align*}
  &\norm{D^2\left(x\mapsto g(u_{\lambda,n}(t;x))\right)}_{\mathcal L_2(H\times H;  L^2(\Omega))}\\
  &\qquad\le C\norm{g}_{C^2_b(H)}\left[\left(\frac{n}{\lambda^3} + n^{\frac34\delta}\right)
  \sqrt{t}e^{\frac12(2K+33C_B+1)t} 
  +e^{16C_Bt}
  \right]\quad\forall\,t\ge0,
\end{align*}
Choosing then 
\beq\label{bar_alpha}
\bar\alpha:=\frac12\left(2K+33C_B+1\right)\vee16C_B,
\eeq
(which is independent of $\lambda$ and $n$),
the dominated convergence theorem implies that 
$\varphi_{\lambda,n}\in C^2_b(H)$ 
and, for $z_1,z_2\in H$, 
\begin{align*}
D^2\varphi_{\lambda_n}(x)[z_1,z_2]=\int_0^{+\infty}e^{-\alpha t}
  &\E\left[Dg(u_{\lambda,n}(t;x))w^{z_1,z_2}_{\lambda,n}(t;x)\right.\\
  &\quad+\left. D^2g(u_{\lambda,n}(t;x))[v^{z_1,z_2}_{\lambda,n}(t;x), v^{z_2}_{\lambda,n}(t;x)]\right]\,\d t.
\end{align*}
It follows for every $x\in H$ that
\[
  \norm{D^2\varphi_{\lambda,n}(x)}_{\mathcal L(H,H)} \leq 
  C\left(\frac{n}{\lambda^3} + n^{\frac34\delta}\right)
  \norm{g}_{C^2_b(H)}\int_0^{+\infty}(\sqrt{t} + 1)e^{-(\alpha-\bar\alpha)t}\,\d t,
\]
hence for $\alpha>\bar\alpha$ this shows that
$\varphi_{\lambda,n}\in C^2_b(H)$, as well as the estimate \eqref{est_C2}.\\
Eventually, let us show that \eqref{eq:kolm_approx} holds.
To this end, we readily note from the stochastic equation \eqref{eq_app2} 
that for every $x\in Z$ we have in particular that $u_{\lambda,n}\in L^2(\Omega; L^2(0,T; H))$.
Hence,  the fact that 
$g\in C^2_b(H)$, It\^o's formula, and the definition of $L_0^{\lambda,n}$ readily give for every $t\geq0$ that 
\begin{align*}
  &\E g(u_{\lambda,n}(t)) + \int_0^t\E (L_0^{\lambda,n}g)(u_{\lambda,n}(s))\,\d s
  =g(x).
\end{align*}
Since $-L_{0}^{\lambda,n}$ coincides on $C^2_b(H)$ 
with the infinitesimal generator of the transition semigroup $P^{\lambda,n}$
associated to \eqref{eq_app2}
(so in particular $L_0^{\lambda,n}$ and $P^{\lambda,n}$ commute on $C^2_b(H)$), it holds in particular that 
\begin{align*}
  \int_0^t\E (L_0^{\lambda,n}g)(u_{\lambda,n}(s))\,\d s
  &=\int_0^tP_s^{\lambda,n}(L_0^{\lambda,n}g)(x)\,\d s=
  \int_0^tL_0^{\lambda,n}(P_s^{\lambda,n}g)(x)\,\d s\\
  &=L_0^{\lambda,n}\int_0^tP_s^{\lambda,n}g(x)\,\d s
  =L_0^{\lambda,n}\int_0^t\E g(u_{\lambda,n}(s))\,\d s,
\end{align*}
from which we get that
\begin{align*}
  &e^{-\alpha t}\E g(u_{\lambda,n}(t)) + \alpha\int_0^te^{-\alpha s}\E g(u_{\lambda,n}(s))\,\d s\\
  &\qquad+ L_0^{\lambda,n}\int_0^te^{-\alpha s}\E g(u_{\lambda,n}(s))\,\d s
  =g(x).
\end{align*}
By boundedness of $g$, letting $t\to+\infty$ yields \eqref{eq:kolm_approx}.
This concludes the proof.
\end{proof}

\subsection{Well posedness \`a la Friedrichs}
We are now ready to show that the Kolmogorv equation \eqref{eq:kolm}
is well posed in the sense of Friedrichs, as rigorously specified in Proposition~\ref{prop:class} below.
This will allow to fully characterise the infinitesimal generator 
of the transition semigroup $P$ on $L^2(\A,\mu)$ in terms of the Kolmogorov operator.
\begin{proposition}
  \label{prop:class}
  In the current setting, assume the dissipativity condition \eqref{cond_uniq_inv},
  let $\bar\alpha$ be as in \eqref{bar_alpha}, and let $\alpha>\bar\alpha$.
  Then, for every $g\in L^2(\A,\mu)$ there exist a unique $\varphi\in L^2(\A,\mu)$ and
  two sequences $\{g_m\}_{m\in\enne}\subset L^2(\A,\mu)$ and $\{\varphi_m\}_{m\in\enne}\subset D(L_0)$
  such that 
  \[
  \alpha\varphi_m + L_0\varphi_m = g_m \quad\mu\text{-a.s.~in } \A,\quad\forall\,m\in\enne,
  \]
  and, as $m\to\infty$,
  \[
  \varphi_m\to \varphi\quad \text{in } L^2(\A,\mu), \qquad
  g_m\to g\quad \text{in } L^2(\A,\mu).
  \]
  In particular, the range of $\alpha I + L_0$ is dense in $L^2(\A,\mu)$.
\end{proposition}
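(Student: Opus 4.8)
The plan is to establish the result by a two-parameter limiting argument built on top of the classical solutions $\varphi_{\lambda,n}$ constructed in Lemma~\ref{lem:kolm_app}, together with the invariant-measure estimate \eqref{int_mu_V} of Proposition~\ref{supp1} and a density argument. First I would reduce to the case of a smooth datum: by Proposition~\ref{app2} (the density result announced in the appendix) the set $C^2_b(H)_{|\A}$ is dense in $L^2(\A,\mu)$, so it suffices to produce, for each fixed $g\in C^2_b(H)$, a single $\varphi\in L^2(\A,\mu)$ together with approximating pairs $(\varphi_m,g_m)$ as in the statement; a diagonal argument over a sequence $g^{(j)}\to g$ in $L^2(\A,\mu)$ with $g^{(j)}\in C^2_b(H)_{|\A}$ then handles the general case, since the $\varphi$'s depend continuously on $g$ (see below) and $D(L_0)$ is a fixed space.

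So fix $g\in C^2_b(H)$ and $\alpha>\bar\alpha$. For each $\lambda>0$ and $n\in\enne$ Lemma~\ref{lem:kolm_app} gives $\varphi_{\lambda,n}\in C^2_b(H)$ solving $\alpha\varphi_{\lambda,n}+L_0^{\lambda,n}\varphi_{\lambda,n}=g$ on $Z\supset\A_{str}$, with the uniform bound \eqref{est_C1} on $\norm{\varphi_{\lambda,n}}_{C^1_b(H)}$ and the (non-uniform) bound \eqref{est_C2} of order $1+n\lambda^{-3}+n^{3\delta/4}$ on $\norm{\varphi_{\lambda,n}}_{C^2_b(H)}$. The strategy is: (i) rewrite $\alpha\varphi_{\lambda,n}+L_0\varphi_{\lambda,n}=g+(L_0-L_0^{\lambda,n})\varphi_{\lambda,n}=:g_{\lambda,n}$ on $\A_{str}$, noting $\varphi_{\lambda,n}{}_{|\A}\in D(L_0)$; (ii) show $\varphi_{\lambda,n}\to\varphi$ in $L^2(\A,\mu)$ along a suitable scaling $n=n(\lambda)\to\infty$ as $\lambda\to0$, and simultaneously $g_{\lambda,n}\to g$ in $L^2(\A,\mu)$; (iii) relabel by $m$. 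For (ii), the key point is to estimate the error operator $L_0-L_0^{\lambda,n}$ applied to $\varphi_{\lambda,n}$ in $L^2(\A,\mu)$: the difference splits into a first-order part $(F'(x)-F'_{\lambda,n}(x),D\varphi_{\lambda,n}(x))_H$ and a second-order part $\tfrac12\operatorname{Tr}[(B_{\lambda,n}(x)B_{\lambda,n}(x)^*-Q(x))D^2\varphi_{\lambda,n}(x)]$. On the first-order part one uses \eqref{est_C1} (uniform in $\lambda,n$) so that it is controlled by $\norm{g}_{C^1_b(H)}\cdot\norm{F'(x)-F'_{\lambda,n}(x)}_H$, and one checks that $F'_{\lambda,n}\to F'$ in a sense integrable against $\mu$ — here one exploits that $\mu$ is concentrated on $\A_{str}$, so $\norm{F'(x)}_H<\infty$ $\mu$-a.s., together with the explicit convolution/Yosida/Ornstein–Uhlenbeck construction of $F'_{\lambda,n}$ and dominated convergence (the monotonicity bound \eqref{F'_prop} and \eqref{rate_lambda} providing the domination). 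On the second-order part one uses \eqref{est_C2}: the trace term is bounded by $\norm{Q(x)-B_{\lambda,n}(x)B_{\lambda,n}(x)^*}_{\mathcal L_1(H)}\cdot\norm{D^2\varphi_{\lambda,n}(x)}_{\mathcal L(H,H)}\lesssim\norm{Q(x)-Q_{\lambda,n}(x)}_{\mathcal L_1(H)}\,(1+n\lambda^{-3}+n^{3\delta/4})\norm{g}_{C^2_b(H)}$, and one shows $\norm{Q(x)-Q_{\lambda,n}(x)}_{\mathcal L_1(H)}\to0$ with a rate in $\lambda$ and $n$ — coming from the $h_{k,\lambda}=\rho_{\lambda^\gamma}\star\tilde h_k$ mollification (rate $\sim\lambda^{\gamma}$ in $C^1$, using $h_k\in C^1$ from \textbf{H2}) and the Ornstein–Uhlenbeck smoothing at scale $n^{-\delta}$ (rate $\sim n^{-\delta/2}$, say). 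The point of the free exponents $\gamma$ and $\delta$ is precisely to make the product of the convergence rate and the blow-up rate tend to zero: e.g. choosing $\delta$ small and $\gamma$ large, and then $n=n(\lambda)$ growing slowly enough that $n(\lambda)\lambda^{-3}\cdot\lambda^{\gamma}\to0$ and $n(\lambda)^{3\delta/4}\cdot n(\lambda)^{-\delta/2}\to0$ (the latter forces $\delta<2$, trivially arrangeable), one gets $(L_0-L_0^{\lambda,n(\lambda)})\varphi_{\lambda,n(\lambda)}\to0$ pointwise $\mu$-a.s. with an $L^2(\A,\mu)$-dominating function (built from $\norm{F'(x)}_H+\norm{\Delta x}_H$, integrable by \eqref{int_mu_V}), hence $\to0$ in $L^2(\A,\mu)$.

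It remains to produce the limit $\varphi$ itself and its uniqueness. For existence of a limit, note that $\varphi_{\lambda,n}=\int_0^\infty e^{-\alpha t}\E[g(u_{\lambda,n}(t;\cdot))]\,\d t$; along the scaling $n=n(\lambda)$ one expects $u_{\lambda,n(\lambda)}^x\to u^x$ as $\lambda\to0$ for $x\in\A$, using the stability estimates for \eqref{eq_AC} (the coefficients $\mathcal F_{\lambda,n}\to F'$, $B_{\lambda,n}\to B$ in the relevant senses, together with the a priori bounds of Appendix~\ref{app1}), whence $\varphi_{\lambda,n(\lambda)}(x)\to\int_0^\infty e^{-\alpha t}\E[g(u^x(t))]\,\d t=:\varphi(x)$ for every $x\in\A$; boundedness of $g$ and dominated convergence upgrade this to convergence in $L^2(\A,\mu)$. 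Alternatively — and more cheaply — one argues abstractly: the family $\{\varphi_{\lambda,n(\lambda)}\}$ is bounded in $L^2(\A,\mu)$ (indeed in $C_b(\A)$), so it has a weakly convergent subsequence, and since $g_{\lambda,n(\lambda)}\to g$ strongly and $\alpha\varphi_{\lambda,n(\lambda)}=g_{\lambda,n(\lambda)}-L_0\varphi_{\lambda,n(\lambda)}$, one identifies the weak limit; strong convergence then follows from the dissipativity estimate $\alpha\norm{\varphi_{\lambda,n}-\varphi_{\lambda',n'}}^2\le\ldots$ obtained by testing the difference of the two Kolmogorov identities (this is where $\alpha>\bar\alpha\ge$ the dissipativity threshold is used, mirroring Theorem~\ref{uniq_thm}), showing $\{\varphi_{\lambda,n(\lambda)}\}$ is Cauchy in $L^2(\A,\mu)$. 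Uniqueness of $\varphi$ is immediate from the same dissipativity/accretivity estimate for $\alpha I+L_0$: if $(\varphi_m,g_m)$ and $(\psi_m,g_m)$ both work then $\alpha(\varphi_m-\psi_m)+L_0(\varphi_m-\psi_m)=0$ gives $\varphi_m=\psi_m$ provided $\alpha$ exceeds the accretivity threshold (which $\bar\alpha$ does), hence the limits agree. Finally, density of the range of $\alpha I+L_0$ is just a restatement: every $g\in L^2(\A,\mu)$ is the $L^2(\A,\mu)$-limit of $g_m=(\alpha I+L_0)\varphi_m$ with $\varphi_m\in D(L_0)$.

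The main obstacle is step (ii): producing, simultaneously, a quantitative rate for $\norm{Q(x)-Q_{\lambda,n}(x)}_{\mathcal L_1(H)}$ and for $\norm{F'(x)-F'_{\lambda,n}(x)}_{H}$ that beats the $C^2$-blow-up $1+n\lambda^{-3}+n^{3\delta/4}$ from \eqref{est_C2}, uniformly in $x$ over a $\mu$-full set, and then threading the needle with a single choice of the exponents $\gamma,\delta$ and a single scaling $n=n(\lambda)$. The delicate bookkeeping is that the mollification of $B$ is at scale $\lambda^\gamma$ (tied to the $\lambda$-parameter) while the extra Ornstein–Uhlenbeck smoothing is at scale $n^{-\delta}$; one must verify that the convolution rate in $\lambda^\gamma$ can be made to dominate $n(\lambda)\lambda^{-3}$ while the OU rate in $n^{-\delta}$ dominates $n^{3\delta/4}$ — the second requiring only $\delta<2$, but the first requiring $\gamma$ large together with $n(\lambda)$ not growing too fast, which in turn must still be fast enough that $u_{\lambda,n(\lambda)}\to u$. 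Checking that these constraints are mutually compatible, and that all the error bounds are dominated by a single $L^1(\mu)$ function so that dominated convergence applies, is the technical heart of the argument.
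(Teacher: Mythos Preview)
Your overall architecture matches the paper's: reduce to $C^2_b$ data by density (Lemma~\ref{lem:density}), take $\varphi_{\lambda,n}$ from Lemma~\ref{lem:kolm_app}, rewrite $\alpha\varphi_{\lambda,n}+L_0\varphi_{\lambda,n}=g+(L_0-L_0^{\lambda,n})\varphi_{\lambda,n}$ on $\A_{str}$, drive the remainder to zero in $L^2(\A,\mu)$ along a scaling of $(\lambda,n)$, and obtain the Cauchy property of $\{\varphi_m\}$ from the accretivity of $L_0$ inherited from $L$ via Lemma~\ref{lem:L}. Your ``abstract'' route to the limit $\varphi$ via accretivity is precisely what the paper does; the direct route through $u_{\lambda,n}^x\to u^x$ is not needed.

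There is, however, a concrete gap exactly where you flag the technical heart. You claim that $n^{3\delta/4}\cdot n^{-\delta/2}\to0$ ``forces $\delta<2$, trivially arrangeable'', but $3\delta/4-\delta/2=\delta/4>0$, so this product \emph{diverges} for every $\delta>0$. With the trace-class estimate $\|Q-Q_{\lambda,n}\|_{\mathcal L_1(H)}\lesssim\|B-B_{\lambda,n}\|_{\mathcal L_{HS}(U,H)}\sim n^{-\delta/2}+\lambda^\gamma$ that you propose, no choice of $\gamma,\delta$ and scaling $n=n(\lambda)$ kills this cross-term, and your bound on the second-order remainder does not go to zero. The paper instead bounds the trace difference by $\|\varphi_{\lambda,n}\|_{C^2_b(H)}\cdot\|B_{\lambda,n}(x)-B(x)\|_{\mathcal L_{HS}(U,H)}^2$, i.e.\ by the \emph{squared} Hilbert--Schmidt difference, which carries rate $n^{-\delta}+\lambda^{2\gamma}$; the offending product then becomes $n^{3\delta/4-\delta}=n^{-\delta/4}\to0$, and the explicit choices $\lambda_n=n^{-1/4}$, $\gamma=4$, $\delta=2$ close all remaining terms simultaneously. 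A secondary difference worth noting: the paper does not attempt a rate for the piece $J_3:=\|e^{-\mathcal C/n}F'(x)-F'(x)\|_H$ but handles it by dominated convergence via Proposition~\ref{supp1}; this is why the final diagonalisation first fixes $j_m$ with $\|f_{j_m}-g\|_{L^2(\A,\mu)}\le 1/m$ and only afterwards chooses $n_m$ large depending on $\|\tilde f_{j_m}\|_{C^2_b(H)}$, rather than running a single scaling in $\lambda$ alone as you suggest.
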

\begin{proof}[Proof of Proposition~\ref{prop:class}]
Given $g\in L^2(\A,\mu)$, we define $\tilde g:H\to\erre$ by extending $g$ to zero outside $\A$, namely
\[
  \tilde g(x):=
  \begin{cases}
  g(x) \quad&\text{if } x\in\A,\\
  0 &\text{if } x\in H\setminus\A.
  \end{cases}
\]
Analogously, note that the probability measure $\mu\in\cP(\A)$ extends (uniquely) to 
a probability measure $\tilde \mu\in\cP(H)$, by setting
\[
  \tilde \mu(E):= \mu(E\cap\A), \quad E\in\cB(H).
\]
With this notation, it is clear that $\tilde g\in L^2(H,\tilde \mu)$: 
by density of $C^2_b(H)$ in $L^2(H,\tilde \mu)$, there exists a sequence $\{\tilde f_j\}_j\subset C^2_b(H)$
such that (see Lemma~\ref{lem:density})
\beq
\label{approx_fj}
  \lim_{j\to\infty}\|\tilde f_j-\tilde g\|_{L^2(H,\tilde\mu)}=0.
\eeq
Clearly, setting $f_j:=(\tilde f_j)_{|\A}$ for every $j\in\enne$, one has that 
$\{f_j\}_{j\in\enne}\subset D(L_0)$ by definition of $D(L_0)$, and also, thanks to the definition of $\tilde \mu$,
\beq
\label{fri1}
  \lim_{j\to\infty}\norm{f_j- g}_{L^2(\A,\mu)}=0.
\eeq
Let $j\in\enne$ be fixed: for every $\lambda>0$ and $n\in\enne$ we set
\[
  \tilde \varphi_{\lambda,n,j}(x):=
  \int_0^{+\infty}e^{-\alpha t}\E[\tilde f_j(u_{\lambda,n}(t;x))]\,\d t, 
  \quad x\in H,
\]
and 
\[
  \varphi_{\lambda,n,j}:=(\tilde\varphi_{\lambda,n,j})_{|\A}.
\]
Lemma~\ref{lem:kolm_app} ensures, 
for all $\lambda>0$ and $n\in\enne$, that $\tilde \varphi_{\lambda,n,j} \in C^2_b(H)$,
hence in particular that $\varphi_{\lambda,n,j} \in D(L_0)$, and 
\[
  \alpha\tilde\varphi_{\lambda,n,j}(x) 
  + L_0^{\lambda,n}\tilde\varphi_{\lambda,n,j}(x) = \tilde f_j(x) \quad\,\forall\,x\in H.
\]
It follows that, for every $x\in\A_{str}$,
\beq
  \label{fri_00}
  \alpha\varphi_{\lambda,n,j}(x) + L_0\varphi_{\lambda,n,j}(x) = 
  f_j(x) + L_0\varphi_{\lambda,n,j}(x) - L_0^{\lambda,n}\tilde\varphi_{\lambda,n,j}(x).
\eeq
Let now $j\in\enne$ be fixed. For every $x\in\A_{str}$ one has that 
\begin{align*}
  &L_0\varphi_{\lambda,n,j}(x) - L_0^{\lambda,n}\tilde\varphi_{\lambda,n,j}(x)\\
  &=-\frac12\operatorname{Tr}\left[B(x)^*D^2\varphi_{\lambda,n,j}(x)B(x) - 
  B_{\lambda,n}(x)^*D^2\varphi_{\lambda,n,j}(x)B_{\lambda,n}(x)\right]\\
  &\qquad+\left(F'(x)-\mathcal F_{\lambda,n}(x), D\varphi_{\lambda,n,j}(x)\right)_H\\
  &=-\frac12\operatorname{Tr}\left[(B(x)^*-B_{\lambda,n}(x)^*)D^2\varphi_{\lambda,n,j}(x)B(x)\right]\\
  &\qquad-\frac12\operatorname{Tr}\left[B_{\lambda,n}(x)^*D^2\varphi_{\lambda,n,j}(x)(B(x)-B_{\lambda,n}(x))\right]\\
  &\qquad+\left(F'(x)-\mathcal F_{\lambda,n}(x), D\varphi_{\lambda,n,j}(x)\right)_H.
\end{align*}
It follows from Lemma~\ref{lem:kolm_app} that there exists a constant $C>0$, 
which is independent of $\lambda$, $n$, and $j$, such that 
\begin{align}
 \nonumber
  &|L_0\varphi_{\lambda,n,j}(x) - L_0^{\lambda,n}\tilde\varphi_{\lambda,n,j}(x)|\\
  \nonumber
  &\leq \norm{\varphi_{\lambda,n,j}}_{C^2_b(H)}
  \norm{B_{\lambda,n}(x)-B(x)}_{\mathcal L_{HS}(U,H)}^2 \\
  \nonumber
  &\qquad+\norm{\varphi_{\lambda,n,j}}_{C^1_b(H)}\norm{\mathcal F_{\lambda,n}(x) - F'(x)}_H\\
  \nonumber
  &\leq C\|\tilde f_j\|_{C^2_b(H)}
  \left(1+\frac{n}{\lambda^3} + n^{\frac34\delta}\right)\norm{B_{\lambda,n}(x)-B(x)}_{\mathcal L_{HS}(U,H)}^2\\
  \label{diff_kolm}
  &\qquad
  + C\|\tilde f_j\|_{C^1_b(H)}\norm{\mathcal F_{\lambda,n}(x) - F'(x)}_H.
\end{align}
Now, let us estimate the two terms on the right hand side. As for the first one, 
we have that 
\begin{alignat*}{2}
  \norm{B_{\lambda,n}(x)-B(x)}_{\mathcal L_{HS}(U,H)}^2
  &\leq3\norm{B_{\lambda,n}(x)-e^{-\frac{\mathcal C}{n^\delta}}B_\lambda(x)}_{\mathcal L_{HS}(U,H)}^2
  \quad&&=:3I_1\\
  &+3\norm{e^{-\frac{\mathcal C}{n^\delta}}B_{\lambda}(x)-
  e^{-\frac{\mathcal C}{n^\delta}}B(x)}_{\mathcal L_{HS}(U,H)}^2
  \quad&&=:3I_2\\
  &+3\norm{e^{-\frac{\mathcal C}{n^\delta}}B(x)-B(x)}_{\mathcal L_{HS}(U,H)}^2
  \quad&&=:3I_3.
\end{alignat*}
Exploiting the definition of $B_{\lambda,n}$, the Jensen inequality, and
the fact that $B_\lambda$ is $\sqrt{C_B}$-Lipschitz continuous on $H$, we have
\begin{align*}
  I_1&=\norm{B_{\lambda,n}(x)-e^{-\frac{\mathcal C}{n^\delta}}B_\lambda(x)}_{\mathcal L_{HS}(U,H)}^2\\
  &=\norm{
  \int_He^{-\frac{\mathcal C}{n^\delta}}\left(B_\lambda(e^{-\frac{\mathcal C}{n^\delta}}x+y) - 
  B_\lambda(x)\right)\,N_{Q_{1/n^\delta}}(\d y)}_{\mathcal L_{HS}(U,H)}^2\\
  &\leq\int_H
  \norm{B_\lambda(e^{-\frac{\mathcal C}{n^\delta}}x+y) - 
  B_\lambda(x)}_{\mathcal L_{HS}(U,H)}^2\,N_{Q_{1/n^\delta}}(\d y)\\
  &\leq C_B\int_H\norm{e^{-\frac{\mathcal C}{n^\delta}}x +y - x}_H^2\,N_{Q_{1/n^\delta}}(\d y)\\
  &\leq 2C_B\norm{e^{-\frac{\mathcal C}{n^\delta}}x - x}^2_H + 2C_B\int_H\norm{y}_H^2\,N_{Q_{1/n^\delta}}(\d y)\\
  &\leq 4C_B\norm{x}_H\norm{e^{-\frac{\mathcal C}{n^\delta}}x - x}_H 
  + 2C_B\int_H\norm{y}_H^2\,N_{Q_{1/n^\delta}}(\d y)
\end{align*}
from which we get, using definition  \eqref{Qt}, estimate \eqref{est_Qt}, and 
the fact that $x\in\A_{str}\subset D(\mathcal C)=Z$ with $\norm{x}_H\leq|D|^{1/2}$,
\begin{align*}
  I_1&\leq 4C_B\norm{x}_H\int_0^{1/n^\delta}\norm{e^{-s\mathcal C}\mathcal C x}_H\,\d s+
  2C_B\operatorname{Tr}(Q_{1/n^\delta})\\
  &\leq \frac{4C_B|D|^{1/2}}{n^{\delta}}\norm{x}_Z
  +\frac{2C_B}{n^\delta}\norm{\mathcal C^{-1}}^2_{\mathcal L_{HS}(H,H)}
\end{align*}
Possibly renominating $C$ independently of $\lambda$, $n$, and $j$, this shows that 
\beq
  \label{fri2}
  I_1\leq \frac{C}{n^\delta}\left(1+\norm{x}_Z\right).
\eeq
As far as $I_2$ is concerned, it is immediate to see, thanks to 
the non-expansivity of $e^{-t\mathcal C}$ and the definition \eqref{def_app2}, that 
\begin{align}
  \nonumber
  I_2&\leq\norm{B_{\lambda}(x)-B(x)}_{\mathcal L_{HS}(U,H)}^2
  =\sum_{k\in\enne}|(\tilde h_{k}\star \rho_{\lambda^\gamma})(x) - h_k(x)|^2\\
  \label{fri3}
  &= \sum_{k\in\enne}|(\tilde h_{k}\star \rho_{\lambda^\gamma})(x) - \tilde h_k(x)|^2
  \leq {\lambda^{2\gamma}}\sum_{k\in\enne}\norm{\tilde h_k'}_{L^\infty(\erre)}^2
  \leq {C_B}{\lambda^{2\gamma}}.
\end{align}
Also, we have by the contraction of $e^{-\frac{\mathcal C}{n^\delta}}$ and the H\"older inequality that 
\begin{align*}
  I_3&=\sum_{k\in\enne}\norm{e^{-\frac{\mathcal C}{n^\delta}}h_k(x) - h_k(x)}_H^2
  \leq2\sum_{k\in \enne}\norm{h_k(x)}_H\norm{e^{-\frac{\mathcal C}{n^\delta}}h_k(x) - h_k(x)}_H\\
  &\leq2C_B^{1/2}\left(\sum_{k\in\enne}\norm{\int_0^{1/n^\delta}e^{-s\mathcal C}\mathcal{C}h_k(x)\,\d s}_H^2\right)^{1/2}\\
  &\leq\frac{2C_B^{1/2}}{n^{\delta/2}}
  \left(\sum_{k\in\enne}\int_0^{1/n^\delta}\norm{e^{-s\mathcal C}\mathcal Ch_k(x)}^2_H\,\d s\right)^{1/2}
  \leq\frac{2C_B^{1/2}}{n^{\delta}}\left(\sum_{k\in\enne}\norm{\mathcal Ch_k(x)}_H^2\right)^{1/2},
\end{align*}
where, by definition of $\mathcal C$ and the regularity of $\{h_k\}_k$, it holds that 
\begin{align*}
  \norm{\mathcal Ch_k(x)}_H &\leq \norm{h_k(x)}_H + \norm{h_k'(x)\Delta x}_H + 
  \norm{h_k''(x)|\nabla x|^2}_H\\
  &\leq \norm{h_k}_{C^2([-1,1])}\left( |D|^{1/2} + \norm{\Delta x}_H + \norm{|\nabla x|^2}_H\right).
\end{align*}
Noting that $H^2(D)\embed W^{1,4}(D)\embed L^\infty(D)$,
the Gagliardo-Nierenberg interpolation inequality yields that 
\[
  \norm{\nabla y}_{L^4(D)} \lesssim_{D,d} \norm{y}_{H^2(D)}^{1/2}\norm{y}_{L^\infty(D)}^{1/2} + \norm{y}_{L^1(D)}
  \quad\forall\,y\in H^2(D),
\]
hence, recalling that $\norm{x}_{L^\infty(D)}\leq1$ for all $x\in\A_{str}$, one has
\[
  \norm{|\nabla x|^2}_H = \norm{\nabla x}_{L^4(D)}^2
  \lesssim_{D} \norm{x}_{H^2(D)}\norm{x}_{L^\infty(D)} + \norm{x}_{L^1(D)} \lesssim \norm{x}_{H^2(D)} + 1.
\]
Putting this information together, by {\bf H2'} we deduce that 
there exists a positive constant $C$ independent of $\lambda$, $n$, and $j$, such that 
\beq
  \label{fri4}
  I_3 \leq \frac{C}{n^{\delta}}\left(1+\norm{x}_Z\right)\left(\sum_{k\in\enne}\norm{h_k}^2_{C^2([-1,1])}\right)^{1/2}
  \leq \frac{C}{n^{\delta}}\left(1+\norm{x}_Z\right).
\eeq
Going back to \eqref{diff_kolm}, we infer that, possibly renominating the constant $C$
independently of $\lambda$, $n$, and $j$, 
\begin{align}
  \nonumber
  &|L_0\varphi_{\lambda,n,j}(x) - L_0^{\lambda,n}\tilde\varphi_{\lambda,n,j}(x)|\\
  \nonumber
  &\leq C\|\tilde f_j\|_{C^2_b(H)}
  \left(1+\frac{n}{\lambda^3} + n^{\frac34\delta}\right)\left(\frac1{n^\delta}
 + \lambda^{2\gamma}\right)\left(1+\norm{x}_Z\right)\\
  \label{diff2_kolm}
  &\qquad+ C\|\tilde f_j\|_{C^1_b(H)}\norm{\mathcal F_{\lambda,n}(x) - F'(x)}_H.
\end{align}
As for the second term on the right hand side, we proceed as above, getting
\begin{alignat*}{2}
  \norm{\mathcal F_{\lambda,n}(x) - F'(x)}_H
  &\leq\norm{\mathcal F_{\lambda,n}(x)-e^{-\frac{\mathcal C}n}\mathcal F_\lambda(x)}_{H}
  \quad&&=:J_1\\
  &+\norm{e^{-\frac{\mathcal C}n}\mathcal F_{\lambda}(x)-
  e^{-\frac{\mathcal C}n}F'(x)}_{H}
  \quad&&=:J_2\\
  &+\norm{e^{-\frac{\mathcal C}{n}}F'(x)-F'(x)}_{H}
  \quad&&=:J_3.
\end{alignat*}
The analogous computations as the term $I_1$ above imply, 
using the definition \eqref{def_app3}, the $\frac1\lambda$-Lipschitz continuity of $\mathcal F_\lambda:H\to H$,
and \eqref{Qt}--\eqref{est_Qt}, that
\begin{align}
  \nonumber
  J_1&\leq \frac1\lambda\int_H\norm{e^{-\frac{\mathcal C}n}x + y - x}_H\, N_{Q_{1/n}}(\d y)\\
  \nonumber
  &\leq\frac1\lambda\int_0^{1/n}\norm{e^{-\frac{\mathcal C}n}\mathcal Cx}_H\,\d s + 
  \frac1\lambda\int_H\norm{y}_H\, N_{Q_{1/n}}(\d y)\\
  \label{fri5}
  &\leq\frac1{\lambda n}\norm{x}_Z + \frac1\lambda\sqrt{\operatorname{Tr}(Q_{1/n})}
  \leq\frac1{\lambda n}\norm{x}_Z + \frac1{\lambda\sqrt{n}}.
\end{align}
Furthermore, the definition of Yosida approximation
and \eqref{def_app1}, together with the contraction of $e^{-\frac{\mathcal C}n}$, yield 
\begin{align}
  \nonumber
  J_2&\leq\norm{\mathcal F_{\lambda}(x) - F'(x)}_H=\norm{(\rho_{\lambda^2}\star\beta_\lambda)(x)-\beta(x)}_H\\
  \nonumber
  &\leq\norm{(\rho_{\lambda^2}\star\beta_\lambda)(x)-\beta_\lambda(x)}_H
  +\norm{\beta_\lambda(x)-\beta(x)}_H\\
  &\leq \lambda^2\frac1\lambda + \lambda\norm{J_\lambda(x)}_H  \leq\lambda\left(1+\norm{x}_H\right).
  \label{fri6}
\end{align}
Hence, exploiting \eqref{approx_fj}, \eqref{fri5}, and \eqref{fri6} in \eqref{diff2_kolm}, 
possibly renominating the constant $C$  independently of $\lambda$, $n$, and $j$, we obtain that 
\begin{align}
  \nonumber
  &|L_0\varphi_{\lambda,n,j}(x) - L_0^{\lambda,n}\tilde\varphi_{\lambda,n,j}(x)|\\
  \nonumber
  &\leq C\|\tilde f_j\|_{C^2_b(H)} \left(1+\norm{x}_Z\right)
  \left(1+\frac{n}{\lambda^3} + n^{\frac34\delta}\right)
  \left(\frac1{n^\delta}+ \lambda^{2\gamma}\right) \\
  \label{diff3_kolm}
  &\quad+ C\|\tilde f_j\|_{C^1_b(H)}
  \left[\left(1+\norm{x}_Z\right)
  \left(\lambda + \frac1{\lambda\sqrt{n}}\right) +
  \norm{e^{-\frac{\mathcal C}{n}}F'(x)-F'(x)}_{H} \right] ,
\end{align}
where the constant $C$ is independent of $\lambda$, $n$, and $j$.
Choosing the specific sequence $\lambda_n:=n^{-1/4}$ in \eqref{diff3_kolm}, 
we deduce for every $j,n\in\enne$ that 
\begin{align*}
  &|L_0\varphi_{\lambda_n,n,j}(x) - L_0^{\lambda_n,n}\tilde\varphi_{\lambda_n,n,j}(x)|\\
  &\leq C\|\tilde f_j\|_{C^2_b(H)} \left(1+\norm{x}_Z\right) 
  \left(1+n^{\frac74} + n^{\frac34\delta}\right)\left(\frac1{n^\delta} + \frac1{n^{\frac\gamma2}}\right)\\
  &\quad+ C\|\tilde f_j\|_{C^1_b(H)}\left[
  \frac2{n^{\frac14}}\left(1+\norm{x}_Z\right)
  +\norm{e^{-\frac{\mathcal C}{n}}F'(x)-F'(x)}_{H}\right].
\end{align*}
At this point, if we choose the rate coefficients $\gamma$ and $\delta$ is such a way that 
\[
  \delta>\frac74, \quad \gamma>\frac72, \quad \gamma>\frac32\delta,
\]
by setting for example
\[
  \gamma:=4, \qquad \delta:=2,
\]
renominating $C$ independently of $n$ and $j$
it easily follows  that
\begin{align*}
  &|L_0\varphi_{\lambda_n,n,j}(x) - L_0^{\lambda_n,n}\tilde\varphi_{\lambda_n,n,j}(x)|\\
  &\leq \frac{C}{n^{\frac14}}\|\tilde f_j\|_{C^2_b(H)} \left(1+\norm{x}_Z\right) 
 + C\|\tilde f_j\|_{C^1_b(H)}\norm{e^{-\frac{\mathcal C}{n}}F'(x)-F'(x)}_{H}.
\end{align*}
Since the right-hand side belongs to $L^2(\A,\mu)$ thanks to Proposition~\ref{supp1},
this yields integrating with respect to $\mu$ and renominating the constant $C$
as usual that
\begin{align}
  \nonumber
  &\norm{L_0\varphi_{\lambda_n,n,j} - L_0^{\lambda_n,n}\tilde\varphi_{\lambda_n,n,j}}_{L^2(\A,\mu)}^2\\
  &\leq \frac{C}{n^{\frac12}}\|\tilde f_j\|_{C^2_b(H)}^2
 + C\|\tilde f_j\|_{C^1_b(H)}^2\int_H\norm{e^{-\frac{\mathcal C}{n}}F'(x)-F'(x)}_{H}^2\,\mu(\d x)
 \label{diff4_kolm}
\end{align}
We are ready now to construct the sequence $\{g_m\}_{m\in\enne}$.
Let $m\in\enne$ be arbitrary. By virtue of \eqref{fri1}, we can 
pick $j_m\in\enne$ such that 
\[
  \|f_{j_m}-g\|_{L^2(\A,\mu)}\le\frac1m.
\]
Also, noting that Proposition~\ref{supp1} and the dominated convergence theorem imply 
\[
  \lim_{n\to\infty}\int_\A\norm{e^{-\frac{\mathcal C}{n}}F'(x)-F'(x)}_{H}^2\,\mu(\d x) = 0,
\]
given such $j_m$ we can then choose $n_m\in\enne$ sufficiently large such that 
\[
  \frac{1}{n_m^{\frac12}}\|\tilde f_{j_m}\|_{C^2_b(H)}^2\leq\frac1{m^2}
\]
and
\[
  \|\tilde f_{j_m}\|_{C^1_b(H)}^2\int_H\norm{e^{-\frac{\mathcal C}{n_m}}F'(x)-F'(x)}_{H}^2\,\mu(\d x)\leq\frac1{m^2}.
\]
Setting then 
\begin{align*}
  \varphi_m&:=\varphi_{\lambda_{n_m}, n_m, j_m}\in D(L_0), \\
  g_m&:= f_{j_m} + L_0\varphi_{\lambda_{n_m},n_m,j_m} 
  - L_0^{\lambda_{n_m},n_m}\tilde\varphi_{\lambda_{n_m},n_m,j_m} \in L^2(\A,\mu),
\end{align*}
thanks to \eqref{fri_00} one has exactly 
\[
  \alpha\varphi_m + L_0\varphi_m = g_m \quad\mu\text{-a.s.~in } \A,\quad\forall\,m\in\enne,
\]
while the estimate \eqref{diff4_kolm} yields, by the choices made above,
\[
  \norm{g_m-g}_{L^2(\A,\mu)} \leq \frac{C}m \longrightarrow0 \quad\text{as } m\to\infty.
\]
Also, we note that $L$ is accretive in $L^2(\A,\mu)$ because it is the infinitesimal
generator of the semigroup of contractions $P$ on $L^2(\A,\mu)$:
hence, since by Lemma~\ref{lem:L} we know that $L_0=L$ on $D(L_0)$, 
it is immediate to deduce that 
\[
  \alpha\norm{\varphi_{m_1}-\varphi_{m_2}}_{L^2(\A,\mu)}
  \leq \norm{g_{m_1}-g_{m_2}}_{L^2(\A,\mu)} \quad\forall\,m_1,m_2\in\enne.
\]
It follows that $\{\varphi_m\}_{m\in\enne}$ is Cauchy in $L^2(\A,\mu)$,
hence it converges to some $\varphi\in L^2(\A,\mu)$. It is not difficult to 
see by using again the accretivity that $\varphi$ is unique, in the sense that 
it does not depend on the sequences $\{\varphi_m\}_{m\in\enne}$ and $\{g_m\}_{m\in\enne}$.
This finally concludes the proof of Proposition~\ref{prop:class}.
\end{proof}

We are now ready to state the main result of this section,
which completely characterises the infinitesimal generator of the transition 
semigroup $P$ on $L^2(\A,\mu)$ in terms of the Kolmogorov operator $L_0$.
\begin{theorem}
  \label{th:ident}
  In the current setting, assume the dissipativity condition \eqref{cond_uniq_inv}.
  Then, the Kolmogorov operator $L_0$ is closable in $L^2(\A,\mu)$,
  and its closure $\overline{L_0}$ coincides with the infinitesimal generator $L$
  of the transition semigroup $P$ on $L^2(\A,\mu)$.
\end{theorem}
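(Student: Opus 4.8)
The plan is to combine the abstract characterisation of closable operators with the two ingredients already established: Lemma~\ref{lem:L}, which tells us $L_0\subset L$ on $D(L_0)$ (in the sense that $L\varphi=L_0\varphi$ $\mu$-a.e.~for every $\varphi\in D(L_0)$), and Proposition~\ref{prop:class}, which gives density of the range of $\alpha I+L_0$ in $L^2(\A,\mu)$ for $\alpha>\bar\alpha$.

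First I would prove that $L_0$ is closable. Since $L$ is the infinitesimal generator of the strongly continuous semigroup of contractions $P$ on $L^2(\A,\mu)$, it is a closed operator; and by Lemma~\ref{lem:L} we have $D(L_0)\subset D(L)$ with $L_0=L$ on $D(L_0)$. Thus $L_0$ is the restriction of a closed operator, hence it is closable, and its closure $\overline{L_0}$ satisfies $\overline{L_0}\subset L$, i.e.~$D(\overline{L_0})\subset D(L)$ and $\overline{L_0}\varphi=L\varphi$ for all $\varphi\in D(\overline{L_0})$.

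Next I would upgrade this inclusion to an equality by a standard range/resolvent argument. Fix $\alpha>\bar\alpha$. On one hand, $L$ is $m$-accretive (it generates a contraction semigroup), so $\alpha I+L$ is a bijection from $D(L)$ onto $L^2(\A,\mu)$; in particular $\operatorname{Ran}(\alpha I+L)=L^2(\A,\mu)$ and $(\alpha I+L)^{-1}$ is bounded with norm $\le 1/\alpha$. On the other hand, from Proposition~\ref{prop:class} the range of $\alpha I+L_0$ is dense in $L^2(\A,\mu)$; since $L_0=L$ on $D(L_0)$ and $L$ is accretive, the estimate $\alpha\norm{\varphi}_{L^2(\A,\mu)}\le\norm{(\alpha I+L_0)\varphi}_{L^2(\A,\mu)}$ holds on $D(L_0)$, which passes to the closure and shows $\alpha I+\overline{L_0}$ is injective with closed range equal to the closure of $\operatorname{Ran}(\alpha I+L_0)$, namely all of $L^2(\A,\mu)$. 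Hence $\alpha I+\overline{L_0}:D(\overline{L_0})\to L^2(\A,\mu)$ is a bijection. Since $\overline{L_0}\subset L$ and $\alpha I+L$ is also a bijection onto the same space, for any $\psi\in D(L)$ we may write $(\alpha I+L)\psi=(\alpha I+\overline{L_0})\tilde\psi$ for a unique $\tilde\psi\in D(\overline{L_0})\subset D(L)$, and then $(\alpha I+L)(\psi-\tilde\psi)=0$ forces $\psi=\tilde\psi$ by injectivity of $\alpha I+L$; therefore $D(L)\subset D(\overline{L_0})$ and $\overline{L_0}=L$, which is the claim.

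The main obstacle is really Proposition~\ref{prop:class}, i.e.~the density of the range of $\alpha I+L_0$, but that is already proved in the excerpt via the delicate balancing of the blow-up rate $n/\lambda^3+n^{3\delta/4}$ of $D^2\varphi_{\lambda,n}$ against the convergence rates of $B_{\lambda,n}\to B$ and $\mathcal F_{\lambda,n}\to F'$, with the specific scaling $\lambda_n=n^{-1/4}$, $\gamma=4$, $\delta=2$. Granting that result, the remaining work for this theorem is genuinely soft functional analysis: the only point requiring a little care is to make sure that the accretivity inequality used to pass to the closure is applied on $D(L_0)$ (where $L_0=L$ by Lemma~\ref{lem:L}) and not on the formal operator $L_0$ outside its domain, and that the resolvent bijectivity of $\alpha I+L$ is invoked with $\alpha>0$, which is automatic since $\alpha>\bar\alpha\ge0$.
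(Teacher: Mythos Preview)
Your proof is correct and follows essentially the same approach as the paper's own proof: both use Lemma~\ref{lem:L} to obtain $L_0\subset L$, deduce closability (you via restriction of a closed operator, the paper via accretivity), invoke Proposition~\ref{prop:class} for density of the range of $\alpha I+L_0$, and conclude $\overline{L_0}=L$ by comparing two m-accretive operators with the same resolvent image. Your final bijectivity argument is slightly more explicit than the paper's, which simply appeals to Lumer--Philips and the maximality of m-accretive operators, but the content is the same.
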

\begin{proof}
  Since $P$ is a semigroup of contractions in $L^2(\A,\mu)$, its infinitesimal generator 
  $L$ is m-accretive in $L^2(\A,\mu)$. Since by Lemma~\ref{lem:L}, we know that 
  $L_0=L$ in $D(L_0)$, it follows that $L_0$ is accretive in $L^2(\A,\mu)$, hence closable.
  Let $(\overline{L_0}, D(\overline{L_0})$ denote such closure
  and let $\alpha>\bar\alpha$, where $\bar\alpha$ is given as in \eqref{bar_alpha}.
  By the Lumer-Philips theorem, the range of $\alpha I + \overline{L_0}$
  coincides with the closure in $L^2(\A,\mu)$ of the range of $\alpha I + L_0$.
  Since the range of $\alpha I + L_0$ is dense in $L^2(\A,\mu)$ by Proposition~\ref{prop:class},
  it follows that $(\overline{L_0}, D(\overline{L_0})$ is m-accretive in $L^2(\A,\mu)$,
  hence it generates a strongly continuous semigroup of contractions in $L^2(\A,\mu)$.
  Since $D(L_0)$ is a core for $\overline{L_0}$ and $L=\overline{L_0}$ on $D(L_0)$,
  it follows that $(L,D(L))=(\overline{L_0}, D(\overline{L_0}))$.
\end{proof}

\appendix
\section{A priori estimates}
\label{app1}

We collect here the needed apriori estimates on the solution process.
\begin{lemma}
\label{tight}
Assume {\bf H1--H2}. Then, for every initial datum $x \in \A$
the respective variational solution to equation \eqref{eq_AC} satisfies 
\begin{equation*}
\E\sup_{r\in[0,t]}\|u(r;x)\|_H^2 + 
\int_0^t\mathbb{E}\left[\|u(s;x)\|^2_V\right]\, {\rm d}s  
\lesssim_{C_1, C_B, |D|,\nu}\left(\|x\|^2_H+t\right).
\end{equation*}
\end{lemma}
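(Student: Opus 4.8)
The plan is to run the Itô formula for the square of the $H$-norm of the solution and to exploit that the drift of \eqref{eq_AC} is, up to a bounded defect, monotone. Fix $x\in\A$ and write $u=u^x$. By Theorem~\ref{ex_uniq_sol} the variational solution enjoys the regularity \eqref{reg_u}--\eqref{reg_F'}, so \eqref{var_sol} can be read as $\d u=(\nu\Delta u-F'(u))\,\d t+B(u)\,\d W$ with $\nu\Delta u-F'(u)\in L^2(\Omega;L^2(0,T;V^*))$ (here $F'(u)\in L^2(\Omega;L^2(0,T;H))\hookrightarrow L^2(\Omega;L^2(0,T;V^*))$ and $\Delta\in\mathcal L(V,V^*)$); hence the classical Itô formula in the Gelfand triple $(V,H,V^*)$ applies and gives, for every $t\ge0$, $\P$-a.s.,
\begin{align*}
  \tfrac12\|u(t)\|_H^2 &+ \nu\int_0^t\|\nabla u(s)\|_H^2\,\d s + \int_0^t(F'(u(s)),u(s))_H\,\d s \\
  &= \tfrac12\|x\|_H^2 + \tfrac12\int_0^t\|B(u(s))\|_{\mathcal L_{HS}(U,H)}^2\,\d s \\
  &\quad + \int_0^t\big(u(s),B(u(s))\,\d W(s)\big)_H.
\end{align*}
Integrating the pointwise bound $F'(r)r\ge C_0r^2-C_1$ from \eqref{F'_prop} one gets $(F'(u(s)),u(s))_H\ge C_0\|u(s)\|_H^2-C_1|D|\ge-C_1|D|$ for a.e.~$s$, $\P$-a.s., while \eqref{HS_norm} yields $\|B(u(s))\|_{\mathcal L_{HS}(U,H)}^2\le C_B|D|$ (recall $u(s;x)\in\A$ $\P$-a.s.\ for every $s$, a property of the variational solution). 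I expect the only mildly delicate point to be precisely this step — making sense of and bounding $(F'(u),u)_H$ despite the singularity of $F'$ — which is however harmless here since $F'(u)\in L^2(0,T;H)$ is built into the definition of variational solution.

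For the supremum estimate I would discard the nonnegative terms $\nu\int_0^t\|\nabla u\|_H^2$ and $C_0\int_0^t\|u\|_H^2$, take the supremum over $r\in[0,t]$, then expectations, and bound the martingale term by the Burkholder--Davis--Gundy inequality and \eqref{HS_norm}: using $\|B(u(s))^*u(s)\|_U\le\|B(u(s))\|_{\mathcal L_{HS}(U,H)}\|u(s)\|_H$ and $\int_0^t\|u(s)\|_H^2\,\d s\le t\,\sup_{s\le t}\|u(s)\|_H^2$,
\[
  \E\sup_{r\le t}\Big|\int_0^r\big(u,B(u)\,\d W\big)_H\Big|
  \le C\,\E\Big(\int_0^t\|B(u(s))\|_{\mathcal L_{HS}(U,H)}^2\|u(s)\|_H^2\,\d s\Big)^{1/2}
  \le C\,(C_B|D|t)^{1/2}\Big(\E\sup_{s\le t}\|u(s)\|_H^2\Big)^{1/2}.
\]
Young's inequality then absorbs a quarter of $\E\sup_{s\le t}\|u(s)\|_H^2$ into the left-hand side; crucially, no Gronwall step is needed, because the drift has contributed only favourable signs and the $t$-dependence above is linear after squaring, so one is left directly with $\E\sup_{r\le t}\|u(r)\|_H^2\lesssim_{C_1,C_B,|D|}\|x\|_H^2+t$ (the a priori finiteness needed for the absorption being guaranteed by \eqref{reg_u}).

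For the $V$-integral estimate I would instead take expectations in the Itô identity before any supremum, so that the martingale term vanishes; keeping the gradient term and discarding $C_0\int_0^t\|u\|_H^2$, the same two bounds give $\nu\,\E\int_0^t\|\nabla u(s)\|_H^2\,\d s\le\tfrac12\|x\|_H^2+\big(\tfrac{C_B}2+C_1\big)|D|t$, hence $\E\int_0^t\|\nabla u(s)\|_H^2\,\d s\lesssim_{\nu,C_1,C_B,|D|}\|x\|_H^2+t$. Finally, since $x\in\A$ forces $u(s;x)\in\A$ $\P$-a.s., so $\|u(s;x)\|_H^2\le|D|$, we have $\E\int_0^t\|u(s)\|_H^2\,\d s\le|D|t$; adding this to the gradient bound and using $\|\cdot\|_V^2\lesssim\|\cdot\|_H^2+\|\nabla\cdot\|_H^2$ (an identity in the Neumann case, a consequence of Poincaré's inequality in the Dirichlet one) yields $\int_0^t\E\|u(s;x)\|_V^2\,\d s\lesssim_{C_1,C_B,|D|,\nu}\|x\|_H^2+t$, which together with the supremum bound is the claim.
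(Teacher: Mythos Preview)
Your proof is correct and follows essentially the same approach as the paper: It\^o's formula for $\tfrac12\|u\|_H^2$, the coercivity bound \eqref{F'_prop} on $(F'(u),u)_H$, the Hilbert--Schmidt bound \eqref{HS_norm} on $B$, and BDG plus Young's inequality for the stochastic integral. The only cosmetic difference is that the paper handles the supremum and the gradient integral in one stroke (taking $\sup_r$ and expectations simultaneously), whereas you split them and, to pass from $\|\nabla u\|_H$ to $\|u\|_V$, invoke the extra observation $\|u(s)\|_H^2\le|D|$ coming from $u(s)\in\A$; this is a harmless variant.
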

\begin{proof}
Let $u:=u^x$ be the unique variational solution to equation \eqref{eq_AC} starting from $x$.
The It\^o formula for the squared $H$-norm $\|\cdot\|^2_H$ yields, for every $t\ge 0$, $\mathbb{P}$-almost surely,
\begin{align}
\label{stima1}
\frac 12 \|u(t)\|^2_H&+\nu\int_0^t \|\nabla u(s)\|^2_H\, {\rm d}s + \int_0^t (F'(u(s)),u(s))_H\, {\rm d}s 
\notag\\
&=\frac 12 \|x\|^2_H + \int_0^t (u(s), B(u(s)){\rm d}W(s))_H 
+ \frac 12 \int_0^t \|B(u(s))\|^2_{\mathcal L_{HS}(U,H)}\, {\rm d}s.
\end{align}
By means of \eqref{F'_prop} we estimate
\begin{align*}
\int_0^t (F'(u),u)_H\, {\rm d}s \ge C_0 \int_0^t \|u(s)\|^2_H\, {\rm d}s -C_1t,
\end{align*}
while from \eqref{HS_norm} we immediately get
\begin{align*}
\frac 12 \int_0^t \|B(u(s))\|^2_{\mathcal L_{HS}(U,H)}\, {\rm d}s \le \frac{C_B|D|t}{2}.\end{align*}
Hence, taking the supremum in time and expectations in \eqref{stima1} we have 
\begin{align*}
&\frac 12 \E\sup_{r \in [0,t]}\|u(r)\|^2_H+\nu\int_0^t \E\|\nabla u(s)\|^2_H\, {\rm d}s
\notag\\
&\le\frac 12 \|x\|^2_H + \left( \frac{C_B|D|}{2}+C_1\right) t
+\E\sup_{r \in [0,t]}\int_0^r (u(s), B(u(s)){\rm d}W(s))_H.
\end{align*}
By means of the Burkholder-Davis-Gundy and Young inequalities
(see \cite[Lem.~4.3]{MS-var} and \cite[Lem.~4.1]{MS-ref} for details), 
we estimate the last term in the above expression as 
\begin{align*}
\E\sup_{r \in [0,t]}\int_0^r (u(s), B(u(s)){\rm d}W(s))_H
&\le \frac 14 \mathbb{E} \sup_{r \in [0,t]} \|u(r)\|^2_H +Ct,
\end{align*}
where the constant $C$ depends only on $C_B$ and $|D|$ (not on $t$).
Combining the above estimates, the thesis follows.
\end{proof}

\begin{lemma}
\label{H^2_est}
Assume {\bf H1--H2}. Then, for every initial datum $x \in \A\cap V$
the respective analytically strong solution to equation \eqref{eq_AC} satisfies 
\begin{equation*}
\E\sup_{r\in[0,t]}\|u(r;x)\|_V^2
+\int_0^t\mathbb{E}\|u(s;x)\|^2_{Z}\, {\rm d}s \lesssim_{K,C_0,C_1, C_B,|D|,\nu}\left(\|x\|^2_V+ t\right).
\end{equation*}
\end{lemma}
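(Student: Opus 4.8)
The plan is to obtain the bound by an $H^2$-type energy estimate, i.e.\ by (formally) testing equation~\eqref{eq_AC} with $-\Delta u$. For $x\in\A\cap V$ let $u:=u^x$ be the analytically strong solution provided by Theorem~\ref{ex_uniq_sol}, so that $u\in L^2(\Omega;L^\infty(0,T;V)\cap L^2(0,T;Z))$ and $\nu\Delta u-F'(u)\in L^2(\Omega;L^2(0,T;H))$. I would apply It\^o's formula to the functional $v\mapsto\tfrac12\norm{\nabla v}_H^2$ along $u$; since the strong solution is obtained as a limit of approximations (Galerkin, or Yosida in $F$), it is enough to carry out the computation at the approximate level and pass to the limit by weak lower semicontinuity of the convex terms, so I write it directly for $u$. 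Using $B(u)e_k=h_k(u)$ and the equation, this gives, for every $t\ge0$, $\P$-a.s.,
\begin{equation*}
  \tfrac12\norm{\nabla u(t)}_H^2+\nu\int_0^t\norm{\Delta u(s)}_H^2\,\d s
  =\tfrac12\norm{\nabla x}_H^2+\int_0^t\big(\Delta u(s),F'(u(s))\big)_H\,\d s
  +\tfrac12\int_0^t\sum_{k\in\enne}\norm{\nabla(h_k(u(s)))}_H^2\,\d s+M(t),
\end{equation*}
with $M(t):=-\int_0^t\big(\Delta u(s),B(u(s))\,\d W(s)\big)_H$.

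The next step is to estimate the two deterministic integrals. For the drift term, writing $\beta:=F'+K\,\mathrm{id}$ (monotone by {\bf H1(ii)}) and using the standard inequality $(-\Delta u,\beta(u))_H\ge0$ — valid even when $F'(u)\notin H^1$, by testing with the Yosida approximations $\beta_\lambda(u)$ — together with $(\Delta u,u)_H=-\norm{\nabla u}_H^2$, one gets $\big(\Delta u,F'(u)\big)_H=\big(\Delta u,\beta(u)\big)_H-K(\Delta u,u)_H\le K\norm{\nabla u}_H^2$. For the It\^o correction, $\nabla(h_k(u))=h_k'(u)\nabla u$ and $\sum_k\norm{h_k'}_{L^\infty(-1,1)}^2\le C_B$ (by {\bf H2}), so $\sum_k\norm{\nabla(h_k(u))}_H^2\le C_B\norm{\nabla u}_H^2$. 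This yields
\begin{equation*}
  \tfrac12\norm{\nabla u(t)}_H^2+\nu\int_0^t\norm{\Delta u(s)}_H^2\,\d s
  \le\tfrac12\norm{\nabla x}_H^2+\Big(K+\tfrac{C_B}{2}\Big)\int_0^t\norm{\nabla u(s)}_H^2\,\d s+M(t).
\end{equation*}
No Gronwall argument is needed here, because $\norm{\nabla u}_H\le\norm{u}_V$ and Lemma~\ref{tight} already provides $\E\int_0^t\norm{\nabla u(s)}_H^2\,\d s\lesssim\norm{x}_H^2+t$.

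I would then conclude in two steps. Taking expectations (the stochastic integral $M$ is a genuine martingale, thanks to $\norm{B(u)}_{\mathcal L_{HS}(U,H)}^2\le C_B|D|$ and $\Delta u\in L^2(\Omega\times(0,T);H)$) and using the bound just recalled gives $\E\int_0^t\norm{\Delta u(s)}_H^2\,\d s\lesssim\norm{x}_V^2+t$; combined with $\E\int_0^t\norm{u(s)}_H^2\,\d s\lesssim\norm{x}_H^2+t$ and the elliptic regularity estimate $\norm{v}_Z\lesssim\norm{\Delta v}_H+\norm{v}_H$, this controls $\E\int_0^t\norm{u(s)}_Z^2\,\d s$. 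For the supremum, I take $\sup_{r\in[0,t]}$ in the last display and then expectations; the martingale term is handled by Burkholder--Davis--Gundy after bounding its quadratic variation through the integration by parts $(\Delta u,h_k(u))_H=-(\nabla u,\nabla(h_k(u)))_H$, which gives $\sum_k(\Delta u,h_k(u))_H^2\le\norm{\nabla u}_H^2\sum_k\norm{\nabla(h_k(u))}_H^2\le C_B\norm{\nabla u}_H^4$ and hence $[M]_t\le C_B\big(\sup_{r\in[0,t]}\norm{\nabla u(r)}_H^2\big)\int_0^t\norm{\nabla u(s)}_H^2\,\d s$; Cauchy--Schwarz and Young (as in \cite[Lem.~4.3]{MS-var}, \cite[Lem.~4.1]{MS-ref}) then absorb $\tfrac18\E\sup_{r\in[0,t]}\norm{\nabla u(r)}_H^2$ into the left-hand side and leave a constant times $\E\int_0^t\norm{\nabla u}_H^2\,\d s$, again controlled by Lemma~\ref{tight}. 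This produces $\E\sup_{r\in[0,t]}\norm{\nabla u(r)}_H^2\lesssim\norm{x}_V^2+t$, and since $\norm{u(r)}_V^2\lesssim\norm{u(r)}_H^2+\norm{\nabla u(r)}_H^2$ together with $\E\sup_{r\in[0,t]}\norm{u(r)}_H^2\lesssim\norm{x}_H^2+t$ (Lemma~\ref{tight}), the stated bound follows; tracking the constants shows they depend only on $K$, $C_0$, $C_1$, $C_B$, $|D|$ and $\nu$.

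The main obstacle I expect is the rigorous justification of the It\^o formula for $v\mapsto\tfrac12\norm{\nabla v}_H^2$ and of the inequality $(\Delta u,F'(u))_H\le K\norm{\nabla u}_H^2$ at the regularity available for the strong solution: since $F'(u)$ is only known to lie in $H$, both require an approximation argument — via the Galerkin scheme or the Yosida regularisation $F_\lambda$ already underlying the existence theory of \cite{Ber} — passing to the limit by weak lower semicontinuity of the convex terms $\norm{\nabla\cdot}_H^2$ and $\int_0^t\norm{\Delta\cdot}_H^2$. A secondary point is that the elliptic estimate $\norm{v}_Z\simeq\norm{\Delta v}_H+\norm{v}_H$ for $v\in Z$ (with the prescribed boundary conditions) must hold for the class of domains considered; this is implicit in the definition of $Z$ and in Theorem~\ref{ex_uniq_sol}.
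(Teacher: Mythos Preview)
Your proof is correct and follows the same line as the paper: It\^o's formula for $\tfrac12\|\nabla\cdot\|_H^2$, the bound on the $F'$-term via $F''\ge -K$ (the paper writes $\int_DF''(u)|\nabla u|^2\ge -K\|\nabla u\|_H^2$ directly, you phrase it equivalently via monotonicity of $\beta$), the $C_B$ bound on the correction term, BDG plus Young for the martingale, and Lemma~\ref{tight} to close. The only notable difference is in the martingale estimate: the paper bounds the quadratic variation crudely by $\|B(u)\|_{\mathcal L_{HS}(U,H)}^2\le C_B|D|$ and absorbs $\tfrac{\nu}{2}\,\E\int_0^t\|\Delta u\|_H^2\,\d s$, whereas you first integrate by parts and absorb $\tfrac18\,\E\sup_{r}\|\nabla u(r)\|_H^2$ instead --- a small caveat is that your identity $(\Delta u,h_k(u))_H=-(\nabla u,\nabla(h_k(u)))_H$ picks up the boundary term $h_k(0)\!\int_{\partial D}\partial_{\bf n}u$ in the Dirichlet case (nothing in {\bf H2} forces $h_k(0)=0$), so there the paper's direct bound is cleaner.
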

\begin{proof}
Let $u:=u^x$ be the unique analytically strong solution to equation \eqref{eq_AC} starting from $x$.
We apply the It\^o formula in \cite[Theorem 4.2]{Pardoux_PhD}
(see also \cite[Prop.~3.3]{SS}) to the functional 
$\frac12\|\nabla\cdot\|_H^2$. We obtain, for any $t\ge 0$, $\mathbb{P}$-almost surely,
\begin{align}
\label{est_2}
&\frac12 \|\nabla u(t)\|^2_H
+ \nu\int_0^t \|\Delta u(s)\|^2_H\, {\rm d}s + \int_0^t\int_D F''(u(s))|\nabla u(s)|^2 \, {\rm d}s
\notag \\
&=\frac 12 \|\nabla x\|^2_H + 
\frac 12 \int_0^t \sum_{k\in\enne}\int_D|h_k'(u(s))\nabla u(s)|^2\,\d s
\notag \\
&\qquad- \int_0^t \left( \Delta u(s), B(u(s))\,{\rm d}W(s)\right)_H.
\end{align}
By {\bf H1} we have that
\begin{align*}
\int_0^t\int_D F''(u(s))|\nabla u(s)|^2 \, {\rm d}s \ge -K\int_0^t\|\nabla u(s)\|_H^2\,\d s,
\end{align*}
while assumption {\bf H2} yields
\begin{align*}
\frac 12 \int_0^t \sum_{k \in \mathbb{N}}\int_D |h_k'(u(s))\nabla u(s)|^2\, {\rm d}s \le \frac{C_B}{2}\int_0^t \|\nabla (u(s))\|^2_H\, {\rm d}s.
\end{align*}
Using again the Burkholder-Davis-Gundy and Young inequalities
(as in \cite[Lem.~4.3]{MS-var}) together with \eqref{HS_norm} we get that 
\begin{align*}
\E\sup_{r \in [0,t]}\int_0^r (\Delta u(s), B(u(s)){\rm d}W(s))_H
&\le \frac \nu2 \mathbb{E} \int_0^t \|\Delta u(s)\|^2_H\,\d s +Ct,
\end{align*}
where the constant $C$ depends only on $\nu$, $C_B$, and $|D|$ (not on $t$).
Hence, taking supremum in time and expectations in \eqref{est_2}, rearranging the terms we obtain 
\begin{align*}
&\frac12 \E\sup_{r\in[0,t]}\left[\|\nabla u(r)\|^2_H\right]+
 \frac\nu2\int_0^t \E \|\Delta u(s)\|^2_H\, {\rm d}s\\
&\le \frac 12 \|\nabla x\|^2_H+ \left(K+\frac{C_B}{2} \right)\int_0^t \mathbb{E} \left[ \|\nabla u(s)\|^2_H\right]\, {\rm d}s
+Ct,
\end{align*}
and the thesis follows from Lemma \ref{tight}.
\end{proof}

\begin{lemma}
\label{lem:F'}
Assume {\bf H1--H2}. Then, for every initial datum $x \in \A\cap V$
the respective analytically strong solution to equation \eqref{eq_AC} satisfies 
\begin{equation*}
\int_0^t\mathbb{E}\|F'(u(s;x))\|^2_{H}\, {\rm d}s \lesssim_{K,C_0,C_1, C_B,|D|,\nu}\left(1+ t\right).
\end{equation*}
\end{lemma}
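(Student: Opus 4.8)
The natural strategy is to test the equation against $F'(u)$ and use the monotonicity structure. More precisely, the plan is to apply the It\^o formula to the convex functional $v\mapsto\int_D F(v)$, where $F$ is extended to a proper convex lower semicontinuous function on $\erre$ as in Section~\ref{sec:inv}. Along the analytically strong solution $u=u^x$ with $x\in\A\cap V$, recalling that $F'$ satisfies {\bf H1} and that $-\Delta u + F'(u)\in H$ because $u(s)\in\A_{str}$ for a.e.~$s$, the chain rule gives, for every $t\ge0$, $\mathbb P$-a.s.,
\begin{align*}
  \int_D F(u(t)) + \nu\int_0^t\int_D F''(u(s))|\nabla u(s)|^2\,\d s + \int_0^t\|F'(u(s))\|_H^2\,\d s
  &= \int_D F(x) + \int_0^t(F'(u(s)),B(u(s))\,\d W(s))_H\\
  &\quad+\frac12\int_0^t\sum_{k\in\enne}\int_D F''(u(s))h_k(u(s))^2\,\d s.
\end{align*}
First I would justify this It\^o formula rigorously: since $F$ is merely $C^0$ on $[-1,1]$ it is cleanest to first run the computation with the Yosida regularisation $F_\lambda$ (or equivalently with $\beta_\lambda$) as in Subsection~\ref{ssec:kolm_reg}, for which all terms are smooth, and then pass to the limit $\lambda\to0$ using the uniform bounds below together with Fatou's lemma on the left-hand side and dominated convergence on the right. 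The point is that the regularised identity only requires $u\in L^2(\Omega;L^2(0,T;Z))\cap L^2(\Omega;C([0,T];H))$, which is exactly the regularity available from Definition~\ref{strong_sol_def}.

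The key estimates are then the following. By {\bf H1}(ii), $F''\ge -K$, so the second term on the left is bounded below by $-\nu K\int_0^t\|\nabla u(s)\|_H^2\,\d s$, which is controlled in expectation by Lemma~\ref{H^2_est}. By assumption {\bf H2}, namely the bound $\|h_k^2F''\|_{L^\infty(-1,1)}$ summing to something $\le C_B$, the last term on the right is bounded by $\tfrac12 C_B|D|\,t$. The stochastic integral is a genuine martingale: indeed $\|F'(u(s))\,B(u(s))\|$ in the appropriate Hilbert--Schmidt sense is controlled, via $\|B(u(s))\|_{\mathcal L_{HS}(U,H)}^2\le C_B|D|$ from \eqref{HS_norm}, by $C_B|D|\,\|F'(u(s))\|_H^2$, which lies in $L^1(\Omega\times(0,t))$ precisely by what we are proving to be finite — so to be safe one argues by a standard localisation/stopping-time argument, obtaining the estimate first on stopped intervals with the martingale term vanishing in expectation, then passing to the limit by monotone convergence. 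Finally $F\ge0$, so $\int_D F(u(t))\ge0$ can be dropped, and $\int_D F(x)$ is finite since $x\in\A$, i.e.~$F(x)\in L^1(D)$ by \eqref{mat_A}; one may further bound it by a structural constant using that $\|x\|_{L^\infty(D)}\le1$ and $F\in C^0([-1,1])$.

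Putting these together and taking expectations yields
\[
  \E\int_0^t\|F'(u(s))\|_H^2\,\d s \le \|F(x)\|_{L^1(D)} + \nu K\int_0^t\E\|\nabla u(s)\|_H^2\,\d s + \frac{C_B|D|}2 t,
\]
and Lemma~\ref{H^2_est} (which controls $\int_0^t\E\|\nabla u(s)\|_H^2\,\d s$ by $\|x\|_V^2 + t$) together with the bound on $\|F(x)\|_{L^1(D)}$ in terms of structural data gives the claimed estimate $\lesssim_{K,C_0,C_1,C_B,|D|,\nu}(1+t)$; note the dependence on $\|x\|_V$ disappears because in the application (Step~4 of Proposition~\ref{supp1}) one integrates over $x$ with the already-established bound $\int_\A\|x\|_V^2\,\mu(\d x)\le C$, but as a self-contained statement one should really keep $\|x\|_V^2$ — here the statement as written presumably intends the estimate after integration, or uses that the relevant bound only needs to hold with such a structural constant once combined with \eqref{sti_V}; I would phrase the final line to match the intended use. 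The main obstacle is the rigorous justification of the It\^o/chain-rule formula for $\int_D F(\cdot)$ at the singular potential and the martingale property of the stochastic term, both of which are handled by the Yosida-regularisation-plus-localisation scheme described above rather than by any new idea.
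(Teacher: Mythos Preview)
Your approach is essentially identical to the paper's proof: apply It\^o's formula to $\int_D F(u)$ (justified via Yosida approximation), drop the nonnegative term $\int_D F(u(t))$, use $F''\ge -K$ on the gradient term, bound the It\^o correction by $\tfrac12 C_B|D|\,t$ via \eqref{C_B}, and control $\int_D F(x)$ by $|D|\|F\|_{C^0([-1,1])}$. Your worry about a residual $\|x\|_V^2$ dependence is unfounded: the term $\nu K\int_0^t\E\|\nabla u(s)\|_H^2\,\d s$ is already controlled by Lemma~\ref{tight} (not Lemma~\ref{H^2_est}) in terms of $\|x\|_H^2+t\le |D|+t$, so the final estimate is uniform in $x\in\A\cap V$ exactly as stated.
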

\begin{proof}
As a consequence of It\^o formula on suitable 
Yosida-type approximations of $F'$ (see \cite[Sec.~4.2]{Ber} for details)
it holds for every $t\ge0$ that 
\begin{align*}
  &\E \int_DF(u(t;x)) + \nu\E\int_0^t\int_DF''(u(s;x))|\nabla u(s;x)|^2\,\d s + \E\int_0^t\norm{F'(u(s;x))}_H^2\,\d s\\
  &\le \int_DF(x) + \frac12\E\int_0^t\sum_{k\in\enne}\int_DF''(u(s;x))|h_k(u(s;x))|^2\,\d s.
\end{align*}
Combining then assumptions {\bf H1--H2} yields
\begin{align*}
  &E\int_0^t\norm{F'(u(s;x))}_H^2\,\d s
  \le|D| \norm{F}_{C([-1,1])} + K\nu\int_0^t\norm{\nabla u(s;x)}_H^2\,\d s + \frac{C_B|D|}2 t,
\end{align*}
and the thesis follows from Lemma~\ref{H^2_est}.
\end{proof}

\section{A density result}
\label{app2}
\begin{lemma}
  \label{lem:density}
  Let $\tilde\mu\in\cP(H)$ and $\tilde g\in L^2(H,\tilde\mu)$. Then, there exists 
  a sequence $\{\tilde f_j\}_{j\in\enne}\subset C^2_b(H)$ such that 
  \[
  \lim_{j\to\infty}\norm{\tilde f_j-\tilde g}_{L^2(H,\tilde\mu)}=0.
  \]
\end{lemma}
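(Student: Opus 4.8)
The plan is to prove the statement in two stages: first reduce to approximating a bounded \emph{continuous} function, and then mollify such a function by means of the Ornstein--Uhlenbeck semigroup $R=(R_t)_{t\ge0}$ constructed in Subsection~\ref{ssec:kolm_reg}, whose null-controllability property \eqref{st_fell} is precisely what forces the mollified functions to be globally twice Fr\'echet differentiable with bounded derivatives, rather than merely differentiable along Cameron--Martin directions.

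First I would show that $C_b(H)$ is dense in $L^2(H,\tilde\mu)$. Since $\tilde\mu$ is a Borel probability measure, $\tilde g$ admits a Borel representative, and by density of simple functions it suffices to approximate $\mathbbm{1}_A$ in $L^2(H,\tilde\mu)$ for an arbitrary Borel set $A\subset H$. Using the standard closed-regularity of finite Borel measures on a metric space (see e.g.\ \cite{Bog}), for $\eps>0$ one picks a closed set $C\subseteq A$ with $\tilde\mu(A\setminus C)<\eps$; the functions $h_k(\cdot):=\max\{0,\,1-k\,\mathrm{dist}(\cdot,C)\}$, $k\in\enne$, are bounded and Lipschitz, hence lie in $C_b(H)$, and decrease pointwise to $\mathbbm{1}_C$, so $h_k\to\mathbbm{1}_C$ in $L^2(H,\tilde\mu)$ by dominated convergence, whence $\limsup_k\|h_k-\mathbbm{1}_A\|_{L^2(H,\tilde\mu)}^2\le\tilde\mu(A\setminus C)<\eps$. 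Letting $\eps\to0$ gives the claim and reduces the lemma to approximating an arbitrary $f\in C_b(H)$ by elements of $C^2_b(H)$ in $L^2(H,\tilde\mu)$.

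For the second stage I would invoke the strong Feller property recalled after \eqref{st_fell} (from \cite[Thm.~6.2.2]{DapZab2}): for every $t>0$ one has $R_tf\in UC^\infty_b(H)\subset C^2_b(H)$, and $\|R_tf\|_{C_b(H)}\le\|f\|_{C_b(H)}$ since $R$ is Markovian. It then remains to check the pointwise convergence $R_tf(x)\to f(x)$ as $t\to0^+$ for every $x\in H$: writing $R_tf(x)=\E\big[f(e^{-t\mathcal C}x+\xi_t)\big]$ with $\xi_t$ centred Gaussian in $H$ of covariance $Q_t$, estimate \eqref{est_Qt} gives $\E\|\xi_t\|_H^2=\operatorname{Tr}(Q_t)\to0$, so together with $e^{-t\mathcal C}x\to x$ one gets $e^{-t\mathcal C}x+\xi_t\to x$ in probability; boundedness and continuity of $f$ and dominated convergence then yield $R_tf(x)\to f(x)$. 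A further application of dominated convergence on $(H,\tilde\mu)$, with dominating constant $\|f\|_{C_b(H)}$, upgrades this to $R_tf\to f$ in $L^2(H,\tilde\mu)$ as $t\to0^+$. A diagonal argument closes the proof: for each $j\in\enne$ pick $g_j\in C_b(H)$ with $\|g_j-\tilde g\|_{L^2(H,\tilde\mu)}<\tfrac1{2j}$ and then $t_j>0$ with $\|R_{t_j}g_j-g_j\|_{L^2(H,\tilde\mu)}<\tfrac1{2j}$, and set $\tilde f_j:=R_{t_j}g_j\in C^2_b(H)$, so that $\|\tilde f_j-\tilde g\|_{L^2(H,\tilde\mu)}<\tfrac1j\to0$.

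The main obstacle is the second stage, namely ensuring that the mollification genuinely lands in $C^2_b(H)$: a naive Gaussian convolution in infinite dimensions is smooth only along the Cameron--Martin space of the Gaussian, so one really needs the quantitative null-controllability estimate \eqref{st_fell} already established for $R$ in Subsection~\ref{ssec:kolm_reg}. The first stage and the pointwise-plus-dominated-convergence argument are then routine, and the diagonal selection is immediate.
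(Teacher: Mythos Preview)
Your proof is correct, and it shares with the paper the essential idea of smoothing via the Ornstein--Uhlenbeck semigroup $R$ from Subsection~\ref{ssec:kolm_reg}, whose null-controllability \eqref{st_fell} is indeed what forces $R_t\psi\in UC^\infty_b(H)\subset C^2_b(H)$. The difference lies in the preliminary reduction: the paper simply truncates $\tilde g$ to a bounded \emph{Borel} function $T_i(\tilde g)$ and applies $R_{1/\ell}$ directly to that, invoking afterwards a line to the effect that $R$ is strongly continuous on $L^2(H,\tilde\mu)$ to pass to the limit; you instead insert an extra step, first approximating $\tilde g$ in $L^2(H,\tilde\mu)$ by bounded \emph{continuous} functions through inner regularity of $\tilde\mu$ and Lipschitz cut-offs, and only then mollify. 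Your route costs a short additional paragraph but buys a transparent pointwise convergence $R_tf(x)\to f(x)$ (valid precisely because $f$ is continuous), after which dominated convergence is immediate. The paper's shortcut is terser, but the assertion that $R$ extends to a strongly continuous semigroup on $L^2(H,\tilde\mu)$ for an \emph{arbitrary} $\tilde\mu\in\cP(H)$ is not self-evident---it would normally require $\tilde\mu$ to be sub-invariant for $R$---and pointwise convergence $R_t\psi\to\psi$ can genuinely fail for merely bounded Borel $\psi$; your detour through $C_b(H)$ sidesteps this delicacy entirely.
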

\begin{proof}
  For every $i\in\enne$ we define $T_i:\erre\to\erre$ as $T_i(r):=\max\{-1,\min\{r,1\}\}$, $r\in\erre$.
  Then, for every $\ell\in\enne$ we set 
  \[
  \tilde f_{i,\ell}(x):=R_{1/\ell}T_{i}(\tilde g)(x)=
  \int_H T_{i}(\tilde g(e^{-\frac{\mathcal C}\ell}x + y))\,N_{Q_{1/\ell}}(\d y), \quad x\in H.
  \]
  Since $T_{i}(\tilde g)\in\cB(H)$ and $R$ is strong Feller, we have that 
  $\tilde f_{i,\ell}\in UC^\infty_b(H)$ for every $i,\ell\in\enne$.
  Moreover, 
  since $R$ extends to a strongly continuous semigroup on $L^2(H,\tilde \mu)$, one has
  by the dominated convergence theorem that 
  \[
  \lim_{\ell\to\infty}\|\tilde f_{i,\ell}-T_i(\tilde g)\|_{L^2(H,\tilde \mu)}=0 \quad\forall\,i\in\enne,\qquad
  \lim_{i\to\infty}\|T_i(\tilde g) - \tilde g\|_{L^2(H,\tilde \mu)}=0,
  \]
  so the conclusion follows trivially.
\end{proof}

\section*{Acknowledgements}
The authors are members of Gruppo Nazionale per l'Analisi Matematica, la Probabilit\`a 
e le loro Applicazioni (GNAMPA), Istituto Nazionale di Alta Matematica (INdAM), 
and gratefully acknowledge financial support 
through the project CUP\_E55F22000270001.
The research of the first-named author has been performed in the framework of the MIUR-PRIN Grant 2020F3NCPX ``Mathematics for industry 4.0 (Math4I4)''.

%%%%%%%%%%%%%%%%%%%%%%%%%%%
%\newpage
%\section{Notes}\textbf{Exponential ergodicity}: see \begin{itemize}\item GOLDYS B. MASLOWSKI B. (2006) Exponential ergodicity for stochasticreaction-diffusion equations. In Stochastic Partial Differential Equations andApplications VII, Lecture Notes Pure Appl. Math., no. 245, G. Da Prato andL. Tubaro (eds.), Chapman  Hall/CRC, 115?131.,\item Butkovsky, Kulik, Scheutzow 2020,\item Butkovsky, Scheutzow - Coupling via comparison Principle and exponential ergodicity of SPDEs in the hypoelliptic setting, 2020.\end{itemize}

%\addcontentsline{toc}{section}{Bibliography} 

%\bibliographystyle{alpha}
%\bibliographystyle{abbrv}
%\footnotesize
%\bibliography{biblio}

%\bibitem[DPZ96]{DPZ2}G. Da Prato and J. Zabczyk,\newblock {\it  Ergodicity for infinite-dimensional systems}.London Mathematical Society Lecture Note Series, 229. Cambridge University Press, Cambridge, 1996.

%\bibitem[KB37]{KB}N. N. Bogoliubov and N. M. Krylov.\newblock La th\'eorie g\'en\'erale de la mesure dans son application \`a l'\'etude des syst\`emes dynamiques de la m\'ecanique non lin\'eaire.\newblock {\em Ann.  Math. II}, 38(1):  65-113, 1937.

\end{document}